\date{\today}
\newcommand{\Prob}{\mathbb{P}}
\newcommand{\Z}{\mathbb{Z}}
\newcommand{\E}{\mathbb{E}}
\newcommand{\m}{\bf m}
    \newtheorem{theorem}{Theorem}
    \newtheorem{lemma}{Lemma}
    \newtheorem{proposition}{Proposition}
 \theoremstyle{definition} 
    \newtheorem{fact}{Fact}
    \newtheorem{remark}{Remark}
    \newtheorem{example}[theorem]{Example}
    \newtheorem{exercise}[theorem]{Exercise}
    \newtheorem{assumption}{Assumption}
\def\newblock{\hskip .11em plus .33em minus .07em}
\def\suchthat{\; : \;}
\def\Z{\mathbb{Z}}
\def\N{\mathbb{N}}
\def\R{\mathbb{R}}
\def\Z{\mathbb{Z}}
\def\<{\langle}
\def\>{\rangle}
\def\given{\left.\vphantom{\hbox{\Large (}}\right|}
\newcommand\mnote[1]{} 
\newcommand\be{\begin{equation*}}
\newcommand\ee{\end{equation*}}
\newcommand\ben{\begin{equation}}
\newcommand\een{\end{equation}}
\newcommand\bes{\begin{eqnarray*}}
\newcommand\ees{\end{eqnarray*}}
\newcommand\bex{\begin{exercise}}
\newcommand\eex{\end{exercise}}
\newcommand\beg{\begin{example}}
\newcommand\eeg{\end{example}}
\newcommand\benu{\begin{enumerate}}
\newcommand\eenu{\end{enumerate}}
\newcommand\beit{\begin{itemize}}
\newcommand\eeit{\end{itemize}}
\newcommand\berk{\begin{remark}}
\newcommand\eerk{\end{remark}}
\newcommand\bdefn{\begin{defintion}}
\newcommand\edefn{\end{definition}}
\newcommand\bthm{\begin{theorem}}
\newcommand\ethm{\end{theorem}}
\newcommand\bprf{\begin{proof}}
\newcommand\eprf{\end{proof}}
\newcommand\blem{\begin{lemma}}
\newcommand\elem{\end{lemma}}
\newcommand{\Poi}{\mbox{\rm Poi}}
\newcommand{\sm}{{\raise0.3ex\hbox{$\scriptstyle \setminus$}}}
\def\l{\left}
\def\r{\right}
\def\CHI{\mathchoice%
{\raise2pt\hbox{$\chi$}}%
{\raise2pt\hbox{$\chi$}}%
{\raise1.3pt\hbox{$\scriptstyle\chi$}}%
{\raise0.8pt\hbox{$\scriptscriptstyle\chi$}}}
\def\smalloplus{\raise1pt\hbox{$\,\scriptstyle \oplus\;$}}
\renewcommand*{\backref}[1]{}
\title[On the diameter of random uniform hypergraphs]{On the diameter of random uniform 
hypergraphs in dense regime}
\author{Kartick Adhikari}
\address{Department of Mathematics, Indian Institute of Science Education and Research, Bhopal 462066}
\email{kartick [at] iiserb.ac.in}
\author{Asrafunnesa Khatun}
\address{Department of Mathematics, Indian Institute of Science Education and Research, Bhopal 462066}
\email{asrafunnesa23 [at] iiserb.ac.in}
\date{\today}
\thanks{2010 Mathematics Subject Classification:  05C80  }
\begin{document}

\newcommand{\acr}{\newline\indent}

\keywords{Erd\H {o}s-R\' {e}nyi graph, Random uniform hypergraphs, Diameter of graph and hypergraph, Stein-Chen method, FKG inequality}
	\begin{abstract}
		
		
		For a fixed natural number $t\ge 2$, we consider  $t$-uniform  random hypergraphs $\mathscr{H} (n,t,p)$ on $n$ vertices $[n]=\{1,\ldots, n\}$, where  each $t$-subset of $[n]$ is included as a hyperedge with probability $p$ and independently. We show that the diameter of $\mathscr{H} (n,t,p)$ is concentrated only at two points in the dense regime. More precisely, suppose $diam(\mathcal H)$ denotes the diameter of a hypergraph $\mathcal H$ on $n$ vertices. We show that, for fixed  $t,c,d$ constants, if $n$ and $p$ (depends on $t,c,d,n$) satisfy
		\[ 
		\frac{ (t-1)^ {d} N^{d} p^{d}} {n}= \log \left( \frac{n^2}{c} \right), \mbox{ where } N={n-1\choose t-1},
		\]
$c$ is a positive constant and $ d\ge 2$ is a natural number,	then 
		\[ 
		\lim_{n \to \infty} \Prob \left( diam( \mathcal{H}) = d \right) = e^{- \frac{c}{2}} \text{ and } \lim_{n \to \infty} \Prob \left( diam(\mathcal{H}) = d+1 \right) = 1- e^{- \frac{c}{2}}.
		\]
		In particular, the case where \( t = 2 \) corresponds to the diameter of the Erdős-Rényi graph, as established by Bollob\'as in \cite[Theorem~6]{bollobas1981diameter}. Bollob\' as's result was proven using the moments method, which is challenging  to apply in our context due to the complexity of the model. In this paper, we utilize the Stein-Chen method along with coupling techniques to prove our result. This approach can potentially be used to solve various problems, in particular diameter problems, in more complex networks.
	
	\end{abstract}
	\maketitle
	
	\section{Introduction and the main result}
	The graph was first introduced by Euler in 1735 to solve the K\"{o}nigsberg bridge problem \cite{alexanderson2006cover}. A finite graph $G$ is a pair $\left( V(G), E(G) \right)$, where $V(G)$ is a finite set, called the set of vertices, and $E(G)$ is a subset of pairs of distinct elements of $V(G)$, called the set of edges. In a graph, a path of length $k$ (a natural number) between two vertices is an alternating sequence of $k+1$ distinct vertices and $k$ edges, which starts at one of those two vertices, ends at the other, and each edge connects the preceding vertex to the following one. In other words, if $x, y \in V(G)$, then a path of length $k$ from $x$ to $y$ can be written as 
	\[
	\left( x=x_0, e_1, x_1, e_2,\ldots, e_k, x_k =y \right),
	\]
	where $x_i, 0 \leq i \leq k$, are distinct vertices and each edge $e_i$ connects the vertices $x_{i-1}$ and $x_i$. A graph is called connected if there exists a path between any two vertices, otherwise disconnected. A shortest path between two vertices is a path with the minimum length among all possible paths connecting them. The distance between two vertices $x$ and $y$ in $G$, denoted by $ d_G (x,y) $, is the length of a shortest path. By convention, $d_G (x,x)=0$ for any vertex $x$. Moreover, if there is no path between $x$ and $y$, then we consider $d_G (x,y) = \infty$. The diameter of $G$, denoted by $diam(G)$, is the length of the longest among all shortest paths between any two nodes, that is,
	\[
	diam(G)= \max \left\{ d_G(x,y) : x,y \in V(G) \right\}.
	\]
	Clearly, $diam(G) = \infty$ if $G$ is disconnected. The diameter provides insight into the networks. In partiocular, it helps to investigate the topological properties and connectivity of the networks. For example, the connectivity of  the world-wide web was studied in \cite{albert1999diameter}.  On the other hand, a small diameter indicates rapid spreading of infectious diseases \cite{watts1998collective}.
	
	Significantly, the diameter of random graphs enhances our understanding of the structural properties and behavior of random networks, for example, the efficiency of information spreading, the spread of epidemiology \cite{mckee2024structural} and network's robustness \cite{callaway2000network}. Around 1960, the random graph theory was introduced by Erd\H{o}s and R\'{e}nyi through a series of seminal papers \cite{erdds1959random}, \cite{erdHos1960evolution}. These random graphs are widely utilized to model real-world networks such as social, information, and biological networks \cite{newman2003random}, \cite{newman2003structure}. However, real-networks often differ from Erd\H{o}s-R\'{e}nyi models.  
	
	The Erd\H{o}s-R\'{e}nyi $\mathscr{G}(n,p)$ random graph model is defined on the vertex set $ [n] = \{1,2,\ldots ,n \}$ and each possible edge is included independently with probability $p$, $0 < p <1$. In other words, $\mathscr{G}(n,p)$ is the probability space $\left( \mathscr{G} [n], \mathscr{F_n}, \Prob \right) $ \cite{Bollobás_2001}, where $\mathscr{G} [n]$ denote the set of all graphs with vertex set $[n]$, $\mathscr{F}$ is the $\sigma$-algebra of all subsets of $\mathscr{G} [n]$, and probability $\Prob $ assigned as follows:
	if $G \in \mathscr{G} [n]$ consists $m$ edges, then  
	\[
	\Prob (G) = p^m \left(1-p \right)^{ \binom {n}{2} -m}.
	\]
	The study of the diameter of the Erd\H{o}s-R\'{e}nyi random graphs was initiated by Klee and Larman \cite{klee1981diameters}, and Bollob{\'a}s \cite{bollobas1981diameter}, \cite{bollobas1982diameter}. Also, the diameters of various types of random graphs, such as scale free random graph  \cite{bollobas2004diameter}, weighted random graphs \cite{amini2015diameter}, hyperbolic random graphs \cite{friedrich2018diameter}, random geometric graphs in the unit ball \cite{ellis2007random}, and random planner graphs \cite{chapuy2015diameter} were extensively studied.
	
	In \cite[Theorem 6]{bollobas1981diameter} Bollob{\'a}s proved the following theorem under the following assumption. We use the notation $\N$ for the set of natural numbers.
	
\begin{assumption}\label{ass bollobas}
Let $c$ be a  positive constant and $d\in \N$ with $d \geq 2$. Consider the parameters $n\in \N$ and $p\in (0,1)$ (depending on $c, d, n$) such that
\[
p^d n^{d-1} = \log \left( \frac{n^2}{c} \right).
\]
\end{assumption} 
\noindent Observe that if $d$ is fixed then $ {pn}/(\log n)^3 \to \infty$,  as $ n \to \infty$. If $d=d(n)$ varies with $n$ then this additional condition is required as in \cite{bollobas1981diameter}. For brevity, we assume $d$ is a fixed positive integer. However, our proof techniques work if $d$ is function of $n$ as in \cite{bollobas1981diameter}. We write  $ G \in \mathscr{G}(n,p)$ to mean that $G$ is an Erd\H{o}s-R\'enyi graph with distribution  $\mathscr{G}(n,p)$. Observe that $G$ depends on $n$, however, for ease of writing  we suppress $n$ in  $G$.

	\begin{theorem}[\cite{bollobas1981diameter}]\label{Bollobas theorem}
	Let $c,d,n,p$  be as in Assumption \ref{ass bollobas}, and $G \in \mathscr{G}(n,p)$. Then
	\[ 
	\lim_{n \to \infty} \Prob \left( diam( G) = d \right) = e^{- \frac{c}{2}} \text{ and } \lim_{n \to \infty} \Prob \left( diam(G) = d+1 \right) = 1- e^{- \frac{c}{2}}.
	\]	
   \end{theorem}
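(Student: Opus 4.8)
The plan is to realise the two-point concentration through a single Poisson limit. For an unordered pair $\{u,v\}$ of distinct vertices set $X_{uv}=\ind\{d_G(u,v)>d\}$ and let $W=\sum_{\{u,v\}}X_{uv}$ count the pairs that are \emph{not} joined by a path of length at most $d$. Since $diam(G)\le d$ exactly when $W=0$, we have the identity $\Prob(diam(G)=d)=\Prob(W=0)$, together with $\Prob(diam(G)=d+1)=\Prob(diam(G)\le d+1)-\Prob(W=0)$. Thus it suffices to prove three things: (i) $W\convd\Poi(c/2)$, which yields $\Prob(W=0)\to e^{-c/2}$; (ii) $\Prob(diam(G)\le d+1)\to 1$; and (iii) $\Prob(diam(G)\le d-1)\to 0$, the last only being needed so that $\{W=0\}$ forces $diam(G)=d$ rather than something smaller. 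Item (ii) is the first-moment statement $\binom{n}{2}\Prob(d_G(u,v)>d+1)\to 0$, while for (iii) one even has $\Prob(d_G(u,v)>d-1)\to 1$ for a \emph{single} fixed pair; both follow from the distance estimates developed for~(i).

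First I would pin down the mean. Writing $\Gamma_k(u)$ for the breadth-first ball of radius $k$ about $u$, the event $\{d_G(u,v)>d\}$ is, to leading order, the event that $v$ attaches to none of $\Gamma_{d-1}(u)$. In the present regime $np=\Theta\bigl(n^{1/d}(\log n)^{1/d}\bigr)\to\infty$ while $|\Gamma_{d-1}(u)|\approx(np)^{d-1}=o(n)$, so the balls of radius $d-1$ are still sublinear and should concentrate sharply around $(np)^{d-1}$. Conditionally on $|\Gamma_{d-1}(u)|$, the non-attachment probability is $(1-p)^{|\Gamma_{d-1}(u)|}\approx\exp\bigl(-p(np)^{d-1}\bigr)=\exp\bigl(-n^{d-1}p^{d}\bigr)=c/n^{2}$, using the defining relation $p^{d}n^{d-1}=\log(n^{2}/c)$. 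Hence $\E[X_{uv}]\sim c/n^{2}$ and $\E[W]\sim\binom{n}{2}\cdot c/n^{2}\to c/2=:\lambda$. Making this rigorous is the first genuine task: it needs two-sided concentration of $|\Gamma_{d-1}(u)|$ and the observation that paths of length strictly below $d$ contribute only $o(1)$, so that "distance $>d$'' and "no length-$d$ path'' coincide asymptotically.

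For the Poisson limit I would use the Stein--Chen method rather than moments. The key structural fact is that each $X_{uv}$ is a \emph{decreasing} function of the edge set, because adding edges cannot increase distances. Consequently the family $\{X_{uv}\}$ is positively related: conditioning on $X_{u'v'}=1$ thins edges, which by an FKG/Holley coupling can be realised on a subgraph $G'\subseteq G$, and on a sparser graph every distance, hence every $X_{uv}$, can only increase. For positively related indicators the Stein--Chen bound controls $d_{TV}\bigl(W,\Poi(\lambda)\bigr)$ by $\min(1,\lambda^{-1})\bigl(\var(W)-\lambda+2\sum_{\{u,v\}}(\E X_{uv})^{2}\bigr)$; since $\sum_{\{u,v\}}(\E X_{uv})^{2}=O(n^{2}\cdot n^{-4})\to 0$, the whole problem reduces to showing
\[
\sum_{\{u,v\}\neq\{u',v'\}}\Cov\bigl(X_{uv},X_{u'v'}\bigr)\longrightarrow 0 .
\]

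I expect this covariance sum to be the main obstacle. Both $\sum\E[X_{uv}X_{u'v'}]$ and $\sum\E[X_{uv}]\E[X_{u'v'}]$ are of order $\lambda^{2}$, so one must show they agree to leading order, i.e. that well-separated pairs are almost independent. The plan is to split the sum according to whether the two pairs share a vertex, and to couple the exploration processes $\Gamma_{d-1}(u),\Gamma_{d-1}(v)$ and $\Gamma_{d-1}(u'),\Gamma_{d-1}(v')$ so that they grow on disjoint edge sets until they (rarely) collide. Concentration of the ball sizes keeps the collision probability small, which should give $\Cov(X_{uv},X_{u'v'})=o(n^{-4})$ uniformly over the $\Theta(n^{4})$ vertex-disjoint pairs and an acceptable $O(n^{-1})$-type contribution from the $O(n^{3})$ vertex-sharing pairs. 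Carrying out this coupling cleanly, with honest error bounds on $|\Gamma_{d-1}|$, is where essentially all of the work sits; it is also the ingredient that should transfer to the $t$-uniform setting once $\Gamma_k$ is read off through hyperedges.
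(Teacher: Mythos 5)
Your plan is correct and, for the heart of the theorem, follows the same route as the paper: the same indicator sum $W$ with $\{diam(G)\le d\}=\{W=0\}$, the same Poisson limit proved by Stein--Chen with positive association of the $X_{uv}$ established via the Harris--FKG inequality (each $\{X_{uv}=1\}$ is a decreasing event in the edge set), the same reduction to $\sum(\E X_{uv})^2\to 0$ plus control of the covariance sum, and the same case split according to whether the two pairs share a vertex. Where you genuinely diverge is in the two endpoint facts. The paper does not compute any moments at radius $d-1$ or $d+1$: it proves a monotone-coupling lemma ($p_1\le p_2$ implies $\Prob_1(diam\le r)\le\Prob_2(diam\le r)$) and then re-applies the already-established two-point concentration at the shifted parameters $(d-1,L_1)$ and $(d+1,L_2)$, sending $L_1\to\infty$ and $L_2\to 0$. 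You instead propose direct moment bounds: $\Prob(d_G(u,v)>d-1)\to 1$ for a single pair, and $\binom{n}{2}\Prob(d_G(u,v)>d+1)\to 0$. Both are valid, but your claim that these ``follow from the distance estimates developed for (i)'' is slightly too quick for the second one: the radius-$d$ ball is \emph{not} sublinear in this regime ($(np)^d\approx n\log n>n$), so the multiplicative concentration of $|\Gamma_k(u)|$ around $(np)^k$ that you use for $k\le d-1$ cannot be applied at $k=d$. You need a separate (easy) step, e.g.\ that conditional on $|\Gamma_{d-1}(u)|\ge\tfrac12(np)^{d-1}$ all but a vanishing fraction of vertices lie in $N_d(u)$, whence $\Prob(d_G(u,v)>d+1)\le e^{-np/2}+O(n^{-10})$ and the union bound closes since $np\to\infty$ polynomially in $n$. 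The paper's coupling trick buys exactly this: by rescaling $p$ it stays in the sublinear-ball regime throughout and never has to touch the radius-$d$ neighbourhood; your route avoids the coupling lemma at the cost of this one extra estimate.
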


The proof of this reult in \cite{bollobas1981diameter} mainly relies on the method of moments.  However, calculating the diameter of the most natural generalisations of Erd\H os-R\'enyi graphs, namely the random uniform hypergraphs and the Linial Meshulam complex \cite{linialmeshulam},\cite{meshulam2009homological}, using this method will be challenging due to the depedency in the model. In this paper, we first provide an alternative proof of  \cref{Bollobas theorem} using the Chernoff bound, the Stein-Chen method, and the coupling technique. Then, using this approach we calculate the diameter of the random uniform hypergraphs. Moreover, our approach can potentially be used in calculating the diameter for more general structure, for example, the Linial-Meshulam complex and the $r$-set line graphs of random uniform hypergraphs \cite{adhikari2025spectrum}.

	A finite hypergraph $\mathcal{H}$ is a pair $(V(\mathcal{H}), \mathcal{E} (\mathcal{H}))$, where $V(\mathcal{H})$ represents a finite set called the set of vertices of $\mathcal{H}$ and $\mathcal{E} (\mathcal{H})$ is a collection of non-empty subsets of $V(\mathcal{H})$ called hyperedges of $\mathcal{H}$ \cite{bretto2013hypergraph}. Throughout, we assume $t \geq 2$ is an integer. A $t$-uniform hypergraph is a hypergraph in which each hyperedge has size $t$. Note that the $2$-uniform hypergraph coincides with the notion of graph. Let $x,y \in V(\mathcal{H}) $, then a path of length $k$ from $x$ to $y$ in $\mathcal{H}$ is a vertex-edge alternative sequence  
	\[
     \left( x=x_0, e_1,x_1,e_2,\ldots ,e_k, x_k=y \right),
	\]
	 where $x_i , 0 \leq i \leq k$, are distinct vertices, $ e_{i}, 1\leq  i \leq k$, are distinct hyperedges, and $e_i$ contains the consecutive vertices $x_{i-1}$ and $x_i$. The definitions of connectivity, a shortest path and distance extend naturally to hypergraph from graph. To avoid repetition, we are excluding these definitions. Let $ d_\mathcal{H} (x,y) $ denote the distance between vertices $x$ and $y$ in $\mathcal H$, and $d_\mathcal{H} (x,y) = \infty$ if there is no path connecting $x$ and $y$. The diameter of $\mathcal{H}$, denoted by $ diam(\mathcal{H})$, is given by 
	 \[
	 diam(\mathcal{H})= \max \left\{ d_\mathcal{H} (x,y) : x,y \in V(\mathcal{H}) \right\}.
	 \]
	 It is clear that if $\mathcal{H}$ is disconnected then $diam(\mathcal{H}) = \infty$.
	 Hypergraphs capture higher-order interactions in the real-world such as group chats, co-authorship on research papers, protein complexes, and chemical reactions, more accurately than graphs, which naturally model pairwise interactions. It is used to model social, neural, ecological and biological networks, where higher-order interactions are very common \cite{ghoshal2009random}, \cite{chen2025simple}, \cite{kook2020evolution}. 
	
	Random hypertrees and hyperforests \cite{lyamin1974random}, connectivity of random hypergraphs \cite{bergman2024connectivity}
	components in random hypergraph \cite{cooley2018largest}, \cite{cooley2018size}, \cite{schmidt1985component}, random walks on hypergraphs \cite{carletti2020random}, \cite{chitra2019random}, and hamiltonian cycles in random hypergraph \cite{frieze2012packing}, \cite{ferber2015closing} \cite{nenadov2019powers} are studied by many researchers. However, to our knowledge, the diameter of random hypergraphs, a natural generalization of Erd\H{o}s-R\'{e}nyi graph, is less explored. In this paper, we study the diameter of random uniform hypergraphs in dense regime.
	
	Throughout $t$ is a natural number with $t\ge 2$. A $t$-uniform random hypergraph is a random unifrom hypergraphs, denoted by $ \mathscr{H}(n,t,p)$, on the set of vertices $ [n] = \{1,2,\ldots ,n \}$ and each possible hyperedge of size $t$ is included independently with probability $p$ \cite{barthelemy2022class}. In other words, $\mathscr{H}(n,t,p)$ is the probability space $\left( \mathscr{H} [n], \mathscr{F_n}, \Prob \right) $, where $\mathscr{H} [n]=\mathscr{H} [n,t]$ denote the set of all $t$-uniform hypergraphs on $[n]$, $\mathscr{F}$ is the $\sigma$-algebra of all subsets of $\mathscr{H} [n]$, and probability $\Prob $ assigned as follows:
	if a hypergraph $\mathcal{H} \in \mathscr {H} (n,t,p) $ consists $m$ hyperedges, then   
	\[
	\Prob \left( \mathcal{H} \right) = p^m \left(1-p \right)^{ { \binom {n}{t} }-m}.
	\]
	Note that the random hypergraph $ \mathscr{H}(n,2,p)$ coincides with the Erd\H{o}s-R\'{e}nyi  $\mathscr{G} (n,p)$ random graph. We study the diameter of $\mathscr{H} (n,t,p)$,
	under  the following assumption. We write, $\mathcal H\in \mathscr{H}(n,t,p)$ to mean that $\mathcal H$ is distributed as $\mathscr{H}(n,t,p)$.
	\begin{assumption} \label{ass main}
	 Let $c $ be a positive constant and  $t, d\ge 2$ be  positive integers. Consider the parameters $n$ and $p$ (depending on $c,t,d,n$) such that 
	\[ 
	\frac{ (t-1)^ {d} N^{d} p^{d}} {n}= \log \left( \frac{n^2}{c} \right), \mbox{ where $N={n-1\choose t-1}$} .
	\]
	\end{assumption}
	\noindent \underline{Remark}: Substituting $t= 2$ and $N=n$ into Assumption \ref{ass main} yields Assumption~\ref{ass bollobas}. Note that, the parameter $p$ depends on $t,c,d,n$, we supress them for ease of writing.
	
	\begin{theorem} \label{main theorem}
		Let $t,c,d,n,p$ be as in Assumption~\ref{ass main}, and  $\mathcal H\in \mathscr{H}(n,t,p)$. Then
		\[ 
		\lim_{n \to \infty} \Prob \left( diam( \mathcal{H}) = d \right) = e^{- \frac{c}{2}} \text{ and } \lim_{n \to \infty} \Prob \left( diam(\mathcal{H}) = d+1 \right) = 1- e^{- \frac{c}{2}}.
		\]
	\end{theorem}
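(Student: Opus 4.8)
The plan is to reduce the two-point concentration to a single Poisson limit law for the count of far-apart pairs, and then to establish that law by the Chen--Stein method. For each unordered pair $\{u,v\}$ set $X_{uv}=\mathbf 1\{d_{\mathcal H}(u,v)>d\}$, and let $W_d=\sum_{\{u,v\}}X_{uv}$ be the number of pairs at distance exceeding $d$. Since $diam(\mathcal H)\le d$ exactly when $W_d=0$, the whole theorem follows once I show three things: (i) $W_d$ is asymptotically $\Poi(c/2)$, so that $\Prob(diam(\mathcal H)\le d)=\Prob(W_d=0)\to e^{-c/2}$; (ii) $\Prob(diam(\mathcal H)\le d-1)\to 0$; and (iii) $\Prob(diam(\mathcal H)\ge d+2)\to 0$. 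Indeed, writing $\Prob(diam=d)=\Prob(W_d=0)-\Prob(W_{d-1}=0)$ and $\Prob(diam=d+1)=\bigl(1-\Prob(diam\ge d+2)\bigr)-\Prob(W_d=0)$, the claimed limits $e^{-c/2}$ and $1-e^{-c/2}$ drop out of (i)--(iii).

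The heart of the argument is the uniform first-moment estimate $\E[X_{uv}]=(1+o(1))\,c/n^2$. I would obtain it by exploring the ball around $u$ via breadth-first search: writing $S_k(u)$ for the set of vertices at distance at most $k$ from $u$, a vertex lies at distance $1$ from a set precisely when it shares a hyperedge with it, and each vertex carries $N=\binom{n-1}{t-1}$ hyperedges, each supplying $t-1$ new neighbours. A Chernoff-type concentration argument then gives $|S_k(u)|=(1+o(1))\bigl((t-1)Np\bigr)^{k}$ for $k\le d-1$, with a multiplicative error small enough (of order $o(1/\log n)$) to survive exponentiation. Conditioning on $S_{d-1}(u)$, the event $\{d_{\mathcal H}(u,v)>d\}$ says that $v$ meets no hyperedge touching $S_{d-1}(u)$, and the number of such hyperedges is $(1+o(1))(t-1)N|S_{d-1}(u)|/n$, so
\[
\Prob\bigl(d_{\mathcal H}(u,v)>d \mid S_{d-1}(u)\bigr)
=\exp\!\Bigl(-(1+o(1))\tfrac{(t-1)Np\,|S_{d-1}(u)|}{n}\Bigr)
=\exp\bigl(-(1+o(1))\log(n^2/c)\bigr),
\]
which is $(1+o(1))c/n^2$ by Assumption~\ref{ass main}. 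Summing over the $\binom n2$ pairs yields $\E[W_d]\to c/2$. The same computation with $d$ replaced by $d-1$ makes the exponent $\bigl((t-1)Np\bigr)^{d-1}/n\to 0$, so $\Prob(d_{\mathcal H}(u,v)>d-1)\to 1$ and hence $\Prob(diam\le d-1)\le 1-\Prob(d_{\mathcal H}(u_0,v_0)>d-1)\to 0$ for a fixed pair, giving (ii); replacing $d$ by $d+1$ makes $\Prob(d_{\mathcal H}(u,v)>d+1)$ super-polynomially small, so a union bound gives (iii).

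For (i) I would invoke the Chen--Stein bound $d_{TV}\bigl(W_d,\Poi(\E W_d)\bigr)\le b_1+b_2+2b_3$ in the local Arratia--Goldstein--Gordon form and show each term vanishes. The FKG/Harris inequality applied to the increasing events $\{d_{\mathcal H}(u,v)\le d\}$ immediately yields the clean lower bound $\Prob(W_d=0)=\Prob\bigl(\bigcap_{\{u,v\}}\{d_{\mathcal H}(u,v)\le d\}\bigr)\ge\prod_{\{u,v\}}\bigl(1-\E X_{uv}\bigr)\to e^{-c/2}$, so the genuine content is the matching upper bound encoded in the $b_i$. Here the coupling technique enters: for a pair $\alpha=\{u,v\}$ I would replace the globally defined $X_\alpha$ by a version depending only on the hyperedges met by the two explorations from $u$ and $v$, and couple the hypergraph so that, for two pairs whose explorations do not interfere, the final connection events become independent; the residual term $b_3$, which records the dependence of $X_\alpha$ on hyperedges outside its local neighbourhood, is then controlled by the coupling error.

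The main obstacle is exactly this dependency control. Because the regime is dense, the radius-$(d-1)$ balls are enormous---of size $\bigl((t-1)Np\bigr)^{d-1}=n^{\,1-1/d+o(1)}$---so for most pairs they overlap, and the neighbourhoods relevant to the Chen--Stein method are essentially global: taking $B_\alpha$ to contain every genuinely dependent pair would leave $b_2$ of order $\Theta(1)$ rather than $o(1)$. The crux is therefore to show that the \emph{excess} correlation is negligible, i.e. that $\sum_{\alpha\ne\beta}\bigl(\E[X_\alpha X_\beta]-\E[X_\alpha]\E[X_\beta]\bigr)=o(1)$, even though the balls collide: one must bound the expected overlap of two such balls (of order $n^{\,1-2/d+o(1)}$) and argue, through the coupling together with FKG-type correlation bounds on the connection events, that conditioning on a shared piece of explored structure perturbs the $\approx c/n^2$ marginals only negligibly. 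Making the conditional ball-size concentration sharp enough to pin the exponent to within a $(1+o(1))$ factor uniformly over pairs, and then propagating that sharpness through the coupling, is where the bulk of the technical work lies.
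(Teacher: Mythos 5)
Your overall architecture --- count remote pairs, prove a Poisson limit by the Stein--Chen method, and sandwich the diameter between $d-1$ and $d+2$ --- is the same as the paper's, and your first-moment computation $\E X_{\alpha}\approx c/n^2$ matches \cref{X alpha=1 in hypergraph}. Your step (ii) is even a pleasant simplification: bounding $\Prob(diam(\mathcal H)\le d-1)$ by the probability that one fixed pair is at distance $\le d-1$ (which tends to $0$ since the relevant exponent $((t-1)Np)^{d-1}/n\to 0$) avoids the paper's detour through the monotone coupling with an auxiliary density $p_1$. However, two of your steps have genuine gaps. The first is the Poisson approximation itself. You correctly observe that the local $b_1+b_2+2b_3$ form of the Chen--Stein bound cannot work here because the radius-$(d-1)$ balls have polynomial size and almost all pairs are ``dependent'', so $b_2=\Theta(1)$; but you then leave the resolution as an acknowledged obstacle (``where the bulk of the technical work lies'') rather than supplying the idea that closes it. The paper's device is the size-biased-coupling form of the Stein--Chen method for positively dependent indicators (Fact~\ref{ft:TVbound}): once the Harris--FKG inequality shows that each $\{X_\alpha=1\}$ is an increasing event, whence $W_n+1-X_J\leq_{st}W_n^*$ (\cref{stochastic dominance lemma for hypergraph}), the total-variation distance is bounded by $\Var(W_n)-\E W_n+2\sum_\alpha p_\alpha^2$, and only the pairwise second moment $\E[X_\alpha X_\beta]\approx c^2/n^4$ (\cref{X alpha=1 beta=1 in hypergraph}) is needed. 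That second moment is exactly the quantity you propose to estimate, but without this version of the theorem the estimate does not by itself convert into a total-variation bound; this is a missing idea, not merely a missing detail.

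The second gap is in step (iii). You assert that ``the same computation with $d$ replaced by $d+1$'' makes $\Prob(d_{\mathcal H}(u,v)>d+1)$ super-polynomially small. The computation does not transfer: it rests on the concentration $|\Gamma_k(x)|\approx((t-1)Np)^k$, which is proved (and provable) only for $k\le d-1$ and is false at $k=d$, since Assumption~\ref{ass main} gives $((t-1)Np)^d=n\log(n^2/c)>n$ while $|\Gamma_d(x)|\le n$; the ball saturates one level before the one your exponent requires. The conclusion can be rescued --- e.g.\ by showing that $|\Gamma_d(u)|\ge n/3$ for all $u$ simultaneously with probability $1-O(1/n)$ and then counting the still-unexplored hyperedges joining $v$ to $\Gamma_d(u)$ --- but that is a separate argument you have not given, and the conditioning on the exploration needs care. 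The paper sidesteps the issue entirely via the monotone coupling in $p$ (\cref{lemma 25}): compare $\mathscr H(n,t,p)$ with $\mathscr H(n,t,p_2)$ for $p_2<p$ satisfying the assumption at level $d+1$ with constant $L_2$, apply the Poisson limit there to get $\Prob(diam>d+1)\le 1-e^{-L_2/2}+o(1)$, and let $L_2\to 0$.
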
	
\noindent Note that  \cref{Bollobas theorem} follows as a corollary of \cref{main theorem}. However, we first present a proof of \cref{Bollobas theorem} followed by the proof of \cref{main theorem} to clarify our new method and the related calculations, thereby enhancing the paper's readability and clarity. 

 This paper focuses on studying the diameter of hypergraphs in the dense regime, that is,  \( np \to \infty \) as \( n \to \infty \).  To our knowledge, the study of the diameter of random hypergraphs in the sparse (thermodynamic) regime, when $np\to \lambda$  (constant) as $n\to \infty$, remains open. However, the diameter of Erd\H os-R\'enyi  graphs $\mathscr{G} (n,p)$ is studied extensively in sparse regime. The limit distribution of the diameter in the sub-critical phase,  when $\lambda < 1$, was derived by {\L}uczak \cite{luczak1998random}. Subsequently, the diameter in the  super-critical phase, that is  $\lambda > 1$, determined by Fernholz and Ramachandran \cite{fernholz2007diameter}, and by Riordan and Wormald  \cite{riordan2010diameter}. In the critical phase, when $\lambda =1$, Nachmias and Peres \cite{nachmias2008critical} has derived the diameter of the largest connected component. The extension of these results to random hypergraphs remains as future work.

This work is partially motivated  by the problem of shotgun assembly of graphs. It means the reconstruction of graphs from neighbourhoods of a given radius $r$. We refer to \cite{gaudio2020shotgun} for a precise definition of shotgun assembly. Mossel and Ross initiated the study of shotgun assembly in \cite{mossel2015shotgun}. Later, the shotgun assembly of unlabeled Erd\H s-R\' enyi graphs is studied extensively in \cite{gaudio2020shotgun}, \cite{Huang}. In \cite{adhikarishotgun}, Adhikari and Chakraborty studied the shotgun assembly of Linial Meshulam complexes. One main problem is how large $r$ should be to ensure that a graph can be identified (up to isomorphism) by its $r$-neighbourhoods. It is easy to see that if $r$ is larger than the graph's diameter, then the graph can be reconstructed from its $r$ neighborhoods. Thus, the diameter gives an upper bound on the neighbourhood's size for reconstructability. Using this fact, an upper bound of $r$ for Erd\H s-R\' enyi graphs is derived in \cite[Theorem 4.2]{mossel2015shotgun}. To our knowledge, the shortgun assembly of hypergraphs has not been studied. The notion of shotgun assembly can easly be extended for hypergraphs. Then \Cref{main theorem} implies that the hypergraphs can be reconstructed from their $(d+1)$-neighborhoods when $p$ and $d$ satisfy Assumption \ref{ass main}. 

The rest of the paper is organized as follows. In the next section, we review key tools, including the Stein-Chen method, Chernoff's bound, coupling, and the Harris-FKG inequality, which are essential for proving our results. We also introduce some notations that will be used throughout the paper. Section \ref{sec:ER} is dedicated for proving \Cref{Bollobas theorem}. The proof of \Cref{main theorem} is given in \Cref{sec:HG}. Additionally, two main auxiliary results \Cref{proposition 1} and \Cref{proposition 2} are proved in \Cref{sec:prop1} and \Cref{sec:prop2} respectively.
	
	\section{Key tools And Notations}
	\subsection{The Stein-Chen Method}: In 1972, Charles Max Stein introduced a  method for Gaussian approximation to the distribution of sums of dependent random variables \cite{stein1972bound}. Later, his student Louis Chen modified the method to obtain a Poisson approximation, which is known as the {\it Stein-Chen method}. This method provides a bound on the total variance distance the difference between a Poisson  and an integer valued  random variables \cite{chen1975poisson}. This method has a wide-range of applications, for example, various birthday-related problems \cite{holst1986birthday}, for calculating the length of longest head run, and to count cycles in random graphs \cite{arratia1990poisson}. 
	
	Generally, the Stein method bounds the distance between probability distributions of the random variables $X$ and $Y$, corresponding to a probability metric $d_{\mathcal{H}}$, defined by
	\[ 
	d_\mathcal{H}(X,Y)= \sup\limits_{h \in \mathcal{H}} \left\vert \E h(X) -\E h(Y) \right\vert, 
	\]
	where $\mathcal{H}$ is a suitable class of functions.
	If the distribution of $Y$ is known, the Stein equation 
	\[
	\mathcal{A}f_h(x)= h(x)-\E h(Y)
	\]
    helps to bound $d_\mathcal{H} (X,Y)$ by finding an appropriate Stein operator $\mathcal{A}f_h$, which satisfies $\E [ \mathcal{A}f_h(Y)]= 0 $.

	Let $\Poi(\lambda)$ denote a Poisson random variable with mean $\lambda$. 
	Let $\mathbb Z^+$ and $\mathbb{R}$ denote the non-negative integers and the real numbers respectively.  We write $Y \sim Poi(\lambda) $ to mean that the random variable $Y$  has same distribution as $Poi(\lambda)$, that is, 
	\[
	\Prob \left( Y = k \right) = \frac{e^{- \lambda} \lambda ^k}{k!},\;\; k=0,1,2, \ldots .
	\]
	The key idea of the Stein-Chen method is inspired from the following characterization of Poisson distribution. A non-negative, integer-valued random variable $Y$ with $Y \sim Poi(\lambda)$, if and only if 
	\[ \lambda \E[f(Y+1)] = \E[Yf(Y)] ,\]
	for all bounded $f:\mathbb{Z}^+ \to \mathbb{R}$. Suppose, for a given function $h:\mathbb{Z}^+ \to \mathbb{R}$, that $f = f_h$ solves the following Stein equation
	\[
	h(x) - \E [ h( Y)] =\lambda f(x+1) - xf(x),
	\]
	where $Y\sim \
	Poi(\lambda)$.
	Replacing $x$ by a random variable $X$ we obtain
	\begin{align}\label{eqn:stein-chen}
	d_\mathcal{H}(X,Y) = \sup\limits_{h \in \mathcal{H}} \left\vert \E h(X) -\E h(Y) \right\vert= \sup\limits_{h \in \mathcal{H}} \left\vert 
	\E[ \lambda f(X+1) - Xf(X)] \right\vert ,
	\end{align}
	the difference between distributions of $X$ and $Y \sim Poi(\lambda)$. In particular, if 
\[
\mathcal{H}=\{ \mathbf{1}_A : A \text{ is a measurable set} \},
\]
 then $d_\mathcal{H}$ is called the total variation distance. Recall, the {\it total variation distance} between two  integer-valued random variables $X$ and $Y$, denoted by $d_{TV} (X,Y) $, is defined as follows
\[ 
d_{TV}(X,Y)= \sup\limits_{A \subseteq \Z} \left\vert \Prob (X \in A)-\Prob (Y\in A) \right\vert .
\]
If $X$ is the sum of Bernoulli random variables then various upper bounds have been derived from \eqref{eqn:stein-chen} in terms of their moments, depending on the dependent structure of the Bernoulli random variables. For more details, see \cite{daly2020stein}. Sometimes the first and second moments  suffice for establishing Poisson convergence using the Stein-Chen method, as discussed in \cite{arratia1989two}.  However, in this paper, we use the size-biased coupling approach to the Stein–Chen method for Poisson approximation when  Bernoulli random variables are  ‘positively dependent’. 


Given a non-negative random variable $W$ with $\E W > 0$, then the random variable $W^*$ is said to have the {\it $W$-size biased distribution} if
\[
\E [g(W^*)] = \frac { \E [Wg(W)]} {\E W},
\]
for all functions $g: \mathbb{R}^+ \to \mathbb{R}$ for which the above expectations exist.  
\begin{fact}\cite[Lemma 4.13]{daly2020stein} \label{W size biased lemma}
	Suppose $ W = X_1 + \cdots +X_n$ where $X_1,\ldots, X_n$ are Bernoulli random variables with parameters $p_1,\ldots, p_n \in (0,1)$ respectively. Let $J$ be a random variable, independent of all else, with $ \Prob (J = i) = \frac {p_i}{p_1+\cdots+p_n}$. Then
	\[ 
	W^*= 1+ \sum\limits_{i \neq J} X^J _i, \;\;\mbox{  where $X^J _i \overset{\text{d}}{=} \left( X_i \mid X_J = 1 \right)$,}
	\]
	has the W-size biased distribution.
\end{fact}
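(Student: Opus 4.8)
The plan is to verify directly that $W^{*}$, as defined, satisfies the defining identity of the $W$-size biased distribution, namely $\E[g(W^{*})] = \E[Wg(W)]/\E W$ for all admissible test functions $g$. Writing $\lambda := \E W = p_1 + \cdots + p_n$, the whole argument reduces to expanding $\E[Wg(W)]$ along the summands of $W$ together with careful bookkeeping of the relevant conditional laws. Notably, no independence among the $X_i$ is needed, which is precisely what makes this coupling useful for positively dependent indicators.

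First I would expand $W = X_1 + \cdots + X_n$ inside the expectation and use linearity to write $\E[Wg(W)] = \sum_{i=1}^{n} \E[X_i g(W)]$. Since each $X_i$ is $\{0,1\}$-valued, the summand $\E[X_i g(W)]$ is supported on the event $\{X_i = 1\}$, so
\[
\E[X_i g(W)] = \Prob(X_i = 1)\,\E[g(W) \mid X_i = 1] = p_i\, \E[g(W) \mid X_i = 1].
\]
The key step is to rewrite this conditional expectation. On the event $\{X_i = 1\}$ we have $W = 1 + \sum_{j \neq i} X_j$, and by definition the conditional law of $(X_j)_{j \neq i}$ given $X_i = 1$ coincides with the law of $(X_j^{i})_{j \neq i}$, where $X_j^{i} \overset{d}{=} (X_j \mid X_i = 1)$. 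Hence $\E[g(W) \mid X_i = 1] = \E\bigl[g\bigl(1 + \sum_{j \neq i} X_j^{i}\bigr)\bigr]$.

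Combining these and dividing by $\lambda$ gives
\[
\frac{\E[Wg(W)]}{\E W} = \sum_{i=1}^{n} \frac{p_i}{\lambda}\, \E\Bigl[g\Bigl(1 + \sum_{j \neq i} X_j^{i}\Bigr)\Bigr].
\]
Finally I would recognise the right-hand side as $\E[g(W^{*})]$: since $J$ is independent of everything else with $\Prob(J = i) = p_i/\lambda$, conditioning on $\{J = i\}$ and applying the tower property yields exactly this weighted sum, because on $\{J = i\}$ one has $W^{*} = 1 + \sum_{j \neq i} X_j^{i}$. This establishes the desired identity.

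The argument is essentially a bookkeeping computation, so there is no serious analytic obstacle. The only point that requires genuine care is the identification of the conditional law of $(X_j)_{j \neq i}$ given $X_i = 1$ with the law of the coupled family $(X_j^{i})_{j \neq i}$, together with the independence of $J$ from these coupled variables; it is this independence that ensures the mixture over $J$ reproduces $\E[g(W^{*})]$ without introducing spurious correlations between the choice of index and the conditioned indicators.
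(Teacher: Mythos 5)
Your proof is correct. The paper itself offers no proof of this statement---it is imported as a Fact directly from the cited reference---and your direct verification ($\E[Wg(W)]=\sum_i p_i\,\E[g(W)\mid X_i=1]$, followed by recognising the mixture over the independent index $J$ as $\E[g(W^*)]$) is precisely the standard argument given in that source. The one caveat you flag is the right one: the displayed condition $X^J_i \overset{d}{=} (X_i\mid X_J=1)$ literally fixes only marginals, while your step $\E[g(W)\mid X_i=1]=\E\bigl[g\bigl(1+\sum_{j\neq i}X^i_j\bigr)\bigr]$ requires the \emph{joint} law of $(X^i_j)_{j\neq i}$ to equal the conditional joint law of $(X_j)_{j\neq i}$ given $X_i=1$; this is the intended reading of the size-biased coupling (and the one the paper relies on when it later couples $X_\beta$ with $X_\beta^\alpha$), so under that standard convention your argument is complete.
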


 We say that a random variable $Y$ is {\it stochastically larger} than another random variable $X$, denoted by $X\le_{\tiny st}Y$, if 
\[
\Prob(X>t)\le \Prob(Y>t), \mbox{ for all } t\in \mathbb R.
\]
This is equivalent to having $\E[g(X)]\le \E[g(Y)]$ for all increasing functions $g$, and to the existence of a coupling $(X',Y')$ of  $(X,Y)$ such that $X'\le Y'$ almost surely. The Bernouli random variables $X_1,\ldots,X_n$ are said to be  {\it positively dependent} if  $ W +1 - X_J \leq _{st} W^* $ holds.  For details see \cite{daly2020stein}, \cite{daly2012stein} and references therein.
\begin{fact}\cite[Theorem 4.14]{daly2020stein}\label{ft:TVbound}
	Let $X_1,\ldots, X_n$,  $p_1,\ldots,p_n$, $ W $ and  $J$ be as defined in Fact~\ref{W size biased lemma}. If $ W +1 - X_J \leq _{st} W^* $ then
	\[
	d_{TV} \left( W,Poi(\E W) \right) \leq \frac{1 - e^{- \E W}}{\E W} \left[ Var(W) - \E W + 2 \sum\limits_{i=1}^{n}  p_i^2\right] .
	\]
\end{fact}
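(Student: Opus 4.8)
The plan is to run the Stein--Chen argument in its size-biased form, so that the positive-dependence hypothesis $W+1-X_J\le_{st}W^*$ can be exploited to turn an expected absolute gap into a difference of first moments. Fix a set $A\subseteq\Z^+$, write $\lambda=\E W$, let $Y\sim\Poi(\lambda)$, and let $f=f_A$ solve the Poisson Stein equation $\mathbf 1_A(x)-\Prob(Y\in A)=\lambda f(x+1)-xf(x)$. Evaluating at $x=W$ and taking expectations gives $\Prob(W\in A)-\Prob(Y\in A)=\E[\lambda f(W+1)-Wf(W)]$, and the supremum over $A$ recovers $d_{TV}(W,\Poi(\lambda))$ on the left. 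The single analytic input I would quote is the standard Poisson Stein-factor estimate $\|\Delta f\|:=\sup_x|f(x+1)-f(x)|\le (1-e^{-\lambda})/\lambda$, uniform in $A$; this is exactly the prefactor in the claim, and it is obtained by solving for $f_A$ explicitly and bounding its increments.

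Next I would remove the term $Wf(W)$ using the defining identity of the size-biased variable, $\E[Wf(W)]=\lambda\,\E[f(W^*)]$, which reduces the problem to controlling
\[
\E[\lambda f(W+1)-Wf(W)]=\lambda\,\E\!\left[f(W+1)-f(W^*)\right].
\]
The device that makes the two available ingredients line up is to interpose $W+1-X_J$, with $J$ and $X_J$ as in Fact~\ref{W size biased lemma}, and telescope the expectation as
\[
\E[f(W+1)-f(W^*)]=\E\bigl[f(W+1)-f(W+1-X_J)\bigr]+\bigl(\E[f(W+1-X_J)]-\E[f(W^*)]\bigr).
\]

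For the first term, $X_J\in\{0,1\}$ forces $f(W+1)-f(W+1-X_J)$ to be either $0$ (if $X_J=0$) or a single increment of $f$ (if $X_J=1$), so it is at most $\|\Delta f\|\,X_J$ in absolute value; since $J$ is independent of the $X_i$, $\E X_J=\lambda^{-1}\sum_{i}p_i^2$. For the second term I would pass to the coupling guaranteed by $W+1-X_J\le_{st}W^*$, under which $W+1-X_J\le W^*$ almost surely; the difference then has a fixed sign, so $|f(W+1-X_J)-f(W^*)|\le\|\Delta f\|\,(W^*-(W+1-X_J))$ pointwise and hence $|\E[f(W+1-X_J)]-\E[f(W^*)]|\le\|\Delta f\|\,\bigl(\E W^*-\E[W+1-X_J]\bigr)$. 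The two remaining means are pure marginal computations: $\E W^*=\E[W^2]/\lambda=\Var(W)/\lambda+\lambda$ and $\E[W+1-X_J]=\lambda+1-\lambda^{-1}\sum_i p_i^2$, so their difference is $\lambda^{-1}\bigl(\Var(W)-\lambda+\sum_i p_i^2\bigr)$. Adding the two contributions gives $|\E[f(W+1)-f(W^*)]|\le\lambda^{-1}\|\Delta f\|\,(\Var(W)-\E W+2\sum_i p_i^2)$; multiplying by $\lambda$ and inserting the Stein-factor bound yields precisely the stated inequality.

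The crux, and the only place the hypothesis is used, is the passage from $\E|W^*-(W+1-X_J)|$ to the signed quantity $\E[W^*-(W+1-X_J)]$: only the signed version collapses to a clean difference of first moments, whereas an expected absolute value would neither simplify nor be controlled by the variance. This is exactly what $W+1-X_J\le_{st}W^*$ buys, via the coupling characterization of stochastic order recorded before the statement. The telescoping through $W+1-X_J$ is what aligns the size-bias identity with this coupling, and it is also responsible for the second copy of $\sum_i p_i^2$ in the bound; the Stein-factor estimate and the moment computations are routine.
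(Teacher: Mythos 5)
Your proof is correct. Note that the paper itself offers no proof of this statement: it is imported as a Fact, cited directly from Daly's notes (Theorem 4.14 there), so there is no internal argument to compare against. Your argument — the Poisson Stein equation with the Barbour--Eagleson factor $\sup_x|f_A(x+1)-f_A(x)|\le(1-e^{-\lambda})/\lambda$, the size-bias identity $\E[Wf(W)]=\lambda\,\E[f(W^*)]$, the telescoping through $W+1-X_J$, and the use of the monotone coupling to replace $\E|W^*-(W+1-X_J)|$ by the difference of means $\lambda^{-1}\bigl(\Var(W)-\lambda+\sum_i p_i^2\bigr)$ — is precisely the standard size-biased-coupling proof given in that reference, and all the moment computations ($\E W^*=\Var(W)/\lambda+\lambda$, $\E X_J=\lambda^{-1}\sum_i p_i^2$) check out.
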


We also use the following fact to prove our results. It provides an upper bound on the total variation distance between two Poisson random variables.
	\begin{fact}[\cite{shang2011isolated}] \label{fact1}
		Let $X$ be a random variable and $\beta \in \mathbb{R}$, then
		\[ d_{TV} \left(Poi(\E[X]),Poi(e^{-\beta }) \right)\leq \left \vert \E [X] -e^{-\beta}\right \vert .\]
	\end{fact}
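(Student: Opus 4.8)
The statement is the general Lipschitz bound $d_{TV}\bigl(\Poi(\lambda_1),\Poi(\lambda_2)\bigr)\le |\lambda_1-\lambda_2|$ specialised to $\lambda_1=\E[X]$ and $\lambda_2=e^{-\beta}$; the random variable $X$ and the parameter $\beta$ enter only through these two nonnegative means. The plan is to establish this Lipschitz bound by an explicit coupling that exploits the additivity of the Poisson family. Since both sides are symmetric in $\lambda_1,\lambda_2$, we may assume without loss of generality that $\lambda_1\ge\lambda_2\ge 0$.

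First I would construct the coupling. Let $Z\sim\Poi(\lambda_2)$ and $W\sim\Poi(\lambda_1-\lambda_2)$ be independent, and set $Y_2=Z$ and $Y_1=Z+W$. By the convolution property of Poisson laws, $Y_1\sim\Poi(\lambda_1)$ and $Y_2\sim\Poi(\lambda_2)$, so $(Y_1,Y_2)$ is a genuine coupling of the two target distributions. Next I would invoke the coupling inequality, which bounds the total variation distance by the probability that the coupled variables disagree:
\[
d_{TV}\bigl(\Poi(\lambda_1),\Poi(\lambda_2)\bigr)\le \Prob(Y_1\neq Y_2)=\Prob(W\ge 1)=1-e^{-(\lambda_1-\lambda_2)}.
\]
Finally, the elementary inequality $1-e^{-x}\le x$ for $x\ge 0$ gives $1-e^{-(\lambda_1-\lambda_2)}\le \lambda_1-\lambda_2=|\lambda_1-\lambda_2|$, which is exactly the claimed bound.

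There is no substantial obstacle here: the result is a standard Lipschitz estimate, and the only points requiring care are the correct setup of the coupling (ensuring the marginals are precisely $\Poi(\lambda_1)$ and $\Poi(\lambda_2)$, which hinges on the independence of $Z$ and $W$) and the justification of the coupling inequality for the supremum definition of $d_{TV}$ used in the paper. As an alternative route that avoids coupling, one could differentiate the Poisson mass function $p_k(\lambda)=e^{-\lambda}\lambda^k/k!$ in $\lambda$, use the identity $p_k'(\lambda)=p_{k-1}(\lambda)-p_k(\lambda)$ together with $\sum_k p_k'(\lambda)=0$ to evaluate $\sum_k|p_k'(\lambda)|=2\,p_{\lfloor\lambda\rfloor}(\lambda)\le 2$, and then integrate the bound $\tfrac12\sum_k|p_k(\lambda_1)-p_k(\lambda_2)|\le\tfrac12\int_{\lambda_2}^{\lambda_1}\sum_k|p_k'(\lambda)|\,d\lambda$ to recover the same inequality.
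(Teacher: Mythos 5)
Your proof is correct. Note, however, that the paper does not prove this statement at all: it is imported as a quoted Fact from the reference \cite{shang2011isolated}, so there is no internal argument to compare against. What you supply is the standard self-contained justification. The monotone coupling $Y_1=Z+W$, $Y_2=Z$ with $Z\sim \Poi(\lambda_2)$ and $W\sim \Poi(\lambda_1-\lambda_2)$ independent has the right marginals by Poisson additivity, the coupling inequality gives $d_{TV}\le \Prob(W\ge 1)=1-e^{-(\lambda_1-\lambda_2)}$, and $1-e^{-x}\le x$ finishes; the symmetry reduction to $\lambda_1\ge\lambda_2$ is legitimate since both sides of the claimed inequality are symmetric in the two parameters. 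Your alternative route via $p_k'(\lambda)=p_{k-1}(\lambda)-p_k(\lambda)$ and the telescoping bound $\sum_k|p_k'(\lambda)|=2p_{\lfloor\lambda\rfloor}(\lambda)\le 2$ is also sound, and in fact yields the slightly sharper integrated bound. The only caveat worth recording is that the Fact as stated is meaningful only when $\E[X]\ge 0$ (otherwise $\Poi(\E[X])$ is undefined); this is harmless here, since in the paper's application $X=W_n$ is a nonnegative count, but your proof implicitly uses this when it takes both means to be nonnegative.
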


	\subsection{The Harris-FKG inequality and Chernoff's bound}
	In this subsection, we recall  the Fortuin–Kasteleyn–Ginibre (FKG) inequality and the Chernoff bounds. Here we discuss the FKG inequality in Boolean setup, in this case, it is also known  as the {\it Harris-FKG inequality}. Let $(\Omega, \prec)$ be partially ordered set, where $\Omega=\{0,1\}^n$ and $\prec$ is a partial order on $\Omega$. Suppose $(\Omega, \mathcal F, \Prob)$ is a probability space. We say that an event $A$ is {\it  increasing} if 
	\[
	w\prec w' \mbox{ and } w\in A \mbox{ then } w'\in A.
	\]
	Similarly, an event $A$ is called {\it decreasing} if $w\prec w' \mbox{ and } w'\in A \mbox{ then } w\in A.$
	Then, the Harris-FKG inequality says that events are positively correlated if they are both increasing or both decreasing.

	\begin{fact}[The Harris-FKG inequality]\label{ft:fkg}
		Let $(\Omega, \mathcal F, \Prob)$ be a partially ordered probability space. 	If $A$ and $B$ are both  increasing or both decreasing events then
		\[
		\Prob(A\cap B)\ge \Prob(A)\Prob(B).
		\]
	\end{fact}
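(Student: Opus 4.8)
The plan is to restate the inequality in terms of monotone functions and then induct on the number of coordinates. Writing $f=\one_A$ and $g=\one_B$, the hypothesis that $A,B$ are both increasing says precisely that $f,g\colon\{0,1\}^n\to\{0,1\}$ are nondecreasing with respect to the coordinatewise order $\prec$, and the conclusion $\Prob(A\cap B)\ge\Prob(A)\Prob(B)$ becomes the covariance bound $\E[fg]\ge\E[f]\,\E[g]$, i.e. $\Cov(f,g)\ge 0$. The case of two decreasing events reduces to this one by replacing $f,g$ with $1-f,1-g$, which are increasing and have the same covariance. Throughout I use that $\Prob$ is the product Bernoulli measure on $\Omega=\{0,1\}^n$, so the coordinates are independent; this independence is what makes the inductive factorisation work.

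First I would settle the one-dimensional case $n=1$. Let $X,Y$ be two independent copies of the single coordinate. Because any two elements of $\{0,1\}$ are comparable under $\prec$, monotonicity of $f$ and $g$ forces $(f(X)-f(Y))(g(X)-g(Y))\ge 0$ pointwise. Taking expectations and expanding gives $0\le \E[(f(X)-f(Y))(g(X)-g(Y))]=2\big(\E[fg]-\E[f]\,\E[g]\big)$, which is exactly $\Cov(f,g)\ge 0$.

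Next I would run the induction on $n$. Assuming the statement for $n-1$ coordinates, I condition on the last coordinate: for $s\in\{0,1\}$ set $F(s)=\E[f(X_1,\dots,X_{n-1},s)]$ and $G(s)=\E[g(X_1,\dots,X_{n-1},s)]$, the partial expectations over the first $n-1$ coordinates. For each fixed $s$ the maps $f(\cdot,s)$ and $g(\cdot,s)$ are increasing in $n-1$ variables, so the induction hypothesis yields $\E[f(\cdot,s)g(\cdot,s)]\ge F(s)G(s)$. Since $f,g$ are increasing in the last variable and the remaining coordinates are independent of it, $F$ and $G$ are increasing functions of $s$; applying the $n=1$ case to $F,G$ gives $\E_s[F(X_n)G(X_n)]\ge \E[F]\,\E[G]=\E[f]\,\E[g]$. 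Chaining these two inequalities through Fubini, $\E[fg]=\E_s\big[\E[f(\cdot,X_n)g(\cdot,X_n)]\big]\ge \E_s[F(X_n)G(X_n)]\ge \E[f]\,\E[g]$, completing the induction.

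The hard part is conceptual rather than computational: one cannot simply repeat the two-independent-copies argument in dimension $n\ge 2$, because two random configurations $X,Y$ need not be comparable under the coordinatewise order, and then $(f(X)-f(Y))(g(X)-g(Y))$ may be negative. The role of the induction is exactly to peel off one coordinate at a time so that the only sign argument ever used is the totally ordered one-dimensional case; the product (independence) structure of $\Prob$ is what lets the conditional expectations factor at each step.
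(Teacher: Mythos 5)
Your proof is correct. Note, however, that the paper does not prove this statement at all: it is quoted as a known \emph{Fact}, attributed to Harris and to Fortuin--Kasteleyn--Ginibre, with the reader referred to Grimmett's book for details. So there is no in-paper argument to compare against; what you have written is the standard induction-on-coordinates proof of Harris's inequality, which is precisely the argument found in the cited references. The two ingredients you isolate are the right ones: the totally ordered one-coordinate case, where the two-independent-copies identity $\E[(f(X)-f(Y))(g(X)-g(Y))]=2\Cov(f,g)$ combines with the pointwise sign argument, and the product structure of the measure, which lets you peel off one coordinate via conditioning. Your remark that the two-copies trick cannot be applied directly in dimension $n\ge 2$ correctly identifies why the induction is needed. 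One small point of hygiene: in the inductive step the functions $F$ and $G$ are $[0,1]$-valued rather than indicators, so the one-dimensional case must be stated for general bounded nondecreasing functions; your one-dimensional argument already covers this since it uses only monotonicity, but the statement being proved should be phrased accordingly. Finally, the Fact as printed speaks of an arbitrary ``partially ordered probability space,'' which in that generality is the deeper FKG theorem; your proof establishes the Harris (product-measure, Boolean) case, which is the only case the paper ever invokes --- both in \cref{lem:positivelyrelated} and in its hypergraph analogue the underlying space is $\{0,1\}^m$ with a Bernoulli product measure.
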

	\noindent This type of inequality was first proved by Harris in 1960 \cite{harris1960lower}. However, it is currently known as the FKG inequality, named after Fortuin–Kasteleyn–Ginibre (1971) \cite {fortuin1971correlation} who proved a more general result in the setting of distributive lattices. For more details, see \cite [Section 2.2]{grimmett2013percolation}.


Finally, we recall the following well-known theorem, the Chernoff bound, which gives a concentration bound for the sum of independent Bernoulli random variables.

\begin{fact} [Chernoff Bound \cite{mitzenmacher2017probability}, \cite{chernoff1952measure}]
Suppose $ X_1,\ldots, X_n$ are independent Bernoulli random variables and  $S_n = \sum_{i=1}^n X_i$. Then, for $0 < \delta < 1$,
 \[ \Prob \left( \left\vert S_n - \E [S_n] \right\vert
\geq \delta \E [S_n] \right) \leq 2e^{-\delta ^2 \E [S_n] /3 }. 
\]
\end{fact}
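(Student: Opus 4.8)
The plan is to prove the two-sided bound by splitting it into an upper-tail and a lower-tail estimate and then combining them through a union bound. Write $\mu = \E[S_n]$ and $p_i = \E[X_i]$, so that $\mu = \sum_{i=1}^n p_i$. The event $\{|S_n - \mu| \geq \delta\mu\}$ is contained in the union $\{S_n \geq (1+\delta)\mu\} \cup \{S_n \leq (1-\delta)\mu\}$, so it suffices to bound each one-sided tail by $e^{-\delta^2\mu/3}$ and add the two estimates, which produces the factor of $2$.

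For the upper tail I would invoke the exponential Markov inequality: for every $\lambda > 0$,
\[
\Prob\left(S_n \geq (1+\delta)\mu\right) \leq e^{-\lambda(1+\delta)\mu}\, \E\left[e^{\lambda S_n}\right].
\]
Since the $X_i$ are independent, the moment generating function factorizes, and using $\E[e^{\lambda X_i}] = 1 + p_i(e^\lambda - 1)$ together with the elementary inequality $1 + x \leq e^x$ gives
\[
\E\left[e^{\lambda S_n}\right] = \prod_{i=1}^n \left(1 + p_i(e^\lambda - 1)\right) \leq \exp\left(\mu(e^\lambda - 1)\right).
\]
Substituting the optimal choice $\lambda = \log(1+\delta)$ then yields
\[
\Prob\left(S_n \geq (1+\delta)\mu\right) \leq \left(\frac{e^\delta}{(1+\delta)^{1+\delta}}\right)^{\mu}.
\]

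The crux of the argument is the analytic estimate $\frac{e^\delta}{(1+\delta)^{1+\delta}} \leq e^{-\delta^2/3}$ for $0 < \delta < 1$, equivalently $(1+\delta)\log(1+\delta) - \delta \geq \delta^2/3$. I would verify this by setting $g(\delta) = (1+\delta)\log(1+\delta) - \delta - \delta^2/3$, observing $g(0) = 0$, and studying $g'(\delta) = \log(1+\delta) - \tfrac{2}{3}\delta$, which also vanishes at $\delta = 0$; a quick look at $g''(\delta) = \tfrac{1}{1+\delta} - \tfrac{2}{3}$ shows $g'$ increases on $(0,\tfrac12)$ and decreases on $(\tfrac12,1)$ yet stays nonnegative throughout since $g'(1) = \log 2 - \tfrac23 > 0$, forcing $g \geq 0$ on $(0,1)$. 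This is the step I expect to require the most care, as the constant $1/3$ is calibrated precisely for this range of $\delta$.

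The lower tail is handled symmetrically: for $\lambda > 0$,
\[
\Prob\left(S_n \leq (1-\delta)\mu\right) \leq e^{\lambda(1-\delta)\mu}\,\E\left[e^{-\lambda S_n}\right] \leq \exp\left(\mu(e^{-\lambda}-1) + \lambda(1-\delta)\mu\right),
\]
and the choice $\lambda = -\log(1-\delta)$ (which is positive and well defined precisely because $\delta < 1$) gives $\Prob(S_n \leq (1-\delta)\mu) \leq \left(e^{-\delta}/(1-\delta)^{1-\delta}\right)^\mu$. A short Taylor estimate of $(1-\delta)\log(1-\delta)$ shows this quantity is at most $e^{-\delta^2\mu/2}$, which is in turn at most $e^{-\delta^2\mu/3}$. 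Adding the two one-sided bounds and applying the union bound then completes the proof.
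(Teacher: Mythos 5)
Your proof is correct. The paper itself gives no proof of this statement — it is imported as a Fact with citations to Chernoff's original paper and to the Mitzenmacher--Upfal textbook — and your argument (exponential Markov inequality with the optimizing choice $\lambda = \log(1+\delta)$, the calculus verification that $(1+\delta)\log(1+\delta) - \delta \geq \delta^2/3$ on $(0,1)$ via the sign analysis of $g''$, the Taylor estimate giving the sharper $e^{-\delta^2 \mu/2}$ lower-tail bound, and the final union bound producing the factor $2$) is exactly the standard proof found in the cited textbook, with all constants and monotonicity claims checking out.
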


\subsection{Exchangeable Sequence}
A finite or infinite sequence 
$X_1, X_2, X_3, \ldots$ of random variables is said to be an exchangeable sequence if for any finite permutation $\sigma$ of the indices $1, 2, 3, \ldots$ (the permutation acts on only finitely many indices, 
with the rest fixed), the joint distribution of permuted sequence is same as the original sequence, that is,
\[
\left( X_{\sigma(1)}, X_{\sigma(2)}, X_{\sigma(3)}, \ldots \right)
\overset{d}{=} \left( X_1, X_2, X_3 ,\ldots \right) .
\]

\subsection{Remote pairs} Let $G$ be a graph  on $n$ vertices and  $d$ be a positive integer. We call a pair of vertices $(x,y)$ of $G$ is a {\it remote pair} in $G$ if $d_G (x,y)> d$. Let 
\begin{equation} \label{def I}
	I = \{ (x,y): x,y \in [n] \text{ such that } 1 \leq x <y \leq n \} .
\end{equation}
It is clear that 
$\vert I \vert = \frac{n(n-1)}{2}$, where $|\cdot|$ denotes the cardinality of the set.  For each $ \alpha = (x,y)\in I$ and $G\in \mathcal G(n,p)$, define the Bernoulli random variable $X_ \alpha$ as follows:
\begin{align}\label{eqn:xalpha}
X_ \alpha =
\begin{cases}
	1 & \text{when } d_G(x,y)>d \\
	0 & \text{otherwise.}
\end{cases}
\end{align}
The total number of remote pairs in the graph G is denoted by 
\begin{equation} \label{def W_n}
	W_n = \sum_{\alpha \in I} X_\alpha .
\end{equation}
Observe that the random variable $W_n$ is defined on $\mathcal G(n,p)$. The following proposition is the key result for the proof of \Cref{Bollobas theorem}. We write $ X_n \xrightarrow d X$ to denote that a sequence of random variables $X_n$ converges to $X$ in distribution as $n\to \infty$.
\begin{proposition} \label{proposition 1}
	Let $c,d,n,p$ be as in Assumption~\ref{ass bollobas}, and let $W_n$ be the total number of remote pairs of $G\in \mathcal G(n,p)$. Then, we have
	\[
	W_n \xrightarrow{d} Poi \left( \frac{c}{2} \right),  \;\;\text{ as } n \to \infty.
	\]
\end{proposition}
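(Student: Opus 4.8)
The plan is to apply the size-biased Stein--Chen bound of \cref{ft:TVbound} to $W_n=\sum_{\alpha\in I}X_\alpha$ and then transfer the resulting Poisson target to $\Poi(c/2)$ via \cref{fact1}. By the vertex symmetry of $\mathscr G(n,p)$ the indicators $X_\alpha$ all have the same marginal, so every $p_\alpha:=\Prob(X_\alpha=1)$ equals a common value $p_\ast=\Prob(d_G(1,2)>d)$ and $\E W_n=\binom n2\,p_\ast$. Three things must then be verified: (i) the positive-dependence hypothesis $W_n+1-X_J\leq_{st}W_n^\ast$ of \cref{ft:TVbound}; (ii) that the bracket $\Var(W_n)-\E W_n+2\sum_\alpha p_\alpha^2$ tends to $0$; and (iii) that $\E W_n\to c/2$, which keeps the prefactor $\frac{1-e^{-\E W_n}}{\E W_n}$ bounded and lets \cref{fact1} finish the argument.

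For (i), adding an edge to $G$ can only shorten distances, so each event $\{d_G(x,y)>d\}$ is a \emph{decreasing} event on the cube $\{0,1\}^{\binom n2}$ of edge indicators; hence the $X_\alpha$ are positively associated by \cref{ft:fkg}. Using the representation $W_n^\ast=1+\sum_{i\ne J}X_i^J$ with $X_i^J\overset{d}{=}(X_i\mid X_J=1)$ from \cref{W size biased lemma}, conditioning on a decreasing event stochastically increases every other decreasing indicator, which gives $\sum_{i\ne J}X_i\leq_{st}\sum_{i\ne J}X_i^J$ and therefore $W_n+1-X_J\leq_{st}W_n^\ast$. Thus the $X_\alpha$ are positively dependent in the required sense and \cref{ft:TVbound} applies.

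For (ii), expanding the variance and using $\sum_\alpha\Var(X_\alpha)=\E W_n-\sum_\alpha p_\alpha^2$ rewrites the bracket as the nonnegative quantity
\[
\sum_{\alpha}p_\alpha^2+\sum_{\alpha\ne\beta}\Cov(X_\alpha,X_\beta).
\]
Once (iii) gives $p_\ast\sim c/n^2$, the first sum is $\binom n2\,p_\ast^2=O(1/n^2)\to 0$. The covariance sum is the crux: I would split the ordered pairs $(\alpha,\beta)$ according to whether they share one vertex (there are $O(n^3)$ of these) or are vertex-disjoint ($O(n^4)$ of these), and estimate $\Cov(X_\alpha,X_\beta)=\Prob(X_\alpha=X_\beta=1)-p_\alpha p_\beta$ in each class. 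The key input is an asymptotic factorization $\Prob(d_G(x,y)>d,\,d_G(u,v)>d)=(1+o(1))\,p_\ast^2$, proved by exploring the breadth-first neighbourhoods of the two pairs and using Chernoff's bound to show that with overwhelming probability the layer sizes grow like $(np)^k$ and the explorations remain essentially disjoint, so that the two remote events decouple; the resulting covariance is $o(n^{-4})$ uniformly, which is small enough to kill both the $O(n^3)$ and $O(n^4)$ contributions.

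Finally, for (iii) I would estimate $p_\ast=\Prob(d_G(1,2)>d)$ by path counting. The expected number of paths of length exactly $d$ between two fixed vertices is $(n-2)(n-3)\cdots(n-d)\,p^d\sim n^{d-1}p^d=\log(n^2/c)$, while paths of length $k<d$ are smaller by a factor $(np)^{k-d}=o(1)$ and are negligible because $np\to\infty$ in the dense regime. A Poisson approximation for the number of length-$d$ paths, again justified by Chernoff concentration of the neighbourhood sizes, gives $p_\ast=(1+o(1))\exp(-n^{d-1}p^d)=(1+o(1))\,c/n^2$, hence $\E W_n\to c/2$. Combining the pieces, \cref{ft:TVbound} yields $d_{TV}(W_n,\Poi(\E W_n))\to 0$; choosing $\beta$ with $e^{-\beta}=c/2$, \cref{fact1} gives $d_{TV}(\Poi(\E W_n),\Poi(c/2))\le|\E W_n-c/2|\to 0$; the triangle inequality then gives $d_{TV}(W_n,\Poi(c/2))\to 0$, which in particular implies $W_n\xrightarrow{d}\Poi(c/2)$. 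I expect the covariance control in (ii) and the sharp first-moment estimate in (iii) to be the main technical obstacles, both ultimately reducing to controlling breadth-first neighbourhood growth through Chernoff's inequality.
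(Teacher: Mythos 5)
Your proposal is correct and follows essentially the same route as the paper: the size-biased Stein--Chen bound of Fact~\ref{ft:TVbound} with positive dependence supplied by the Harris--FKG inequality applied to the monotone events $\{d_G(x,y)>d\}$, the moment asymptotics $\Prob(X_\alpha=1)\approx c/n^2$ and $\Prob(X_\alpha=X_\beta=1)\approx c^2/n^4$ obtained from Chernoff control of the breadth-first layers $|\Gamma_k(x)|\approx (np)^k$, and Fact~\ref{fact1} plus the triangle inequality to pass to $\Poi(c/2)$. The only cosmetic difference is that you phrase the first-moment estimate via a Poisson approximation for the number of length-$d$ paths, whereas the paper conditions on $|\Gamma_{d-1}(x)|$ and uses $\Prob(X_\alpha=1\mid |\Gamma_{d-1}(x)|=m)=(1-p)^m$; both reduce to the same neighbourhood-concentration lemmas.
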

\noindent   In other words, \cref{proposition 1} states that the total number of remote pairs in  $ G \in \mathscr{G} (n,p)$ converges in distribution to a Poisson random variable with mean $c/2$.

Let  $\mathcal H$ be a  $t$-uniform hypegraph on $n$ vertices. Similarly, we call a pair of vertices $(x,y)$  in $\mathcal H \in \mathscr{H} (n,t,p)$ is  remote if $d_\mathcal{H} (x,y)>d$, where $d $ is a positive integer.  With an abuse of notation,  the total number of remote pairs in $\mathcal H$ is denoted by $W_n$. The following proposition, a key result for proving  \Cref{main theorem}, states
that $W_n$ is asymptotically a Poisson  random variable with mean $c/2$.
\begin{proposition} \label{proposition 2}
	Let $c,t, d, n, p$ be as in Assumption~\ref{ass main}, and let $W_n$ be the total number of remote pairs in $\mathcal H\in \mathscr{H} (n,t,p)$. Then, we have
	\[
	W_n \xrightarrow{d} Poi \left( \frac{c}{2} \right), \;\; \text{ as } n \to \infty.
	\]  
\end{proposition}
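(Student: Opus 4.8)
The plan is to prove Proposition~\ref{proposition 2} via the size-biased coupling version of the Stein--Chen method (Fact~\ref{ft:TVbound}), exactly paralleling the structure used for Proposition~\ref{proposition 1} but accounting for the extra dependency coming from hyperedges of size $t$. Writing $W_n=\sum_{\alpha\in I}X_\alpha$ where $X_\alpha=\mathbf 1\{d_{\mathcal H}(x,y)>d\}$ for $\alpha=(x,y)$, I would first establish that $\E[W_n]\to c/2$. By exchangeability of the vertex labels, $\E[W_n]=|I|\,\Prob(d_{\mathcal H}(x,y)>d)=\binom n2 q$ where $q$ is the probability a fixed pair is remote. The heart of this step is to show $\binom n2 q\to c/2$, i.e.\ that $q\sim c/n^2$, using Assumption~\ref{ass main}. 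The quantity $(t-1)Np$ is the expected number of vertices adjacent to a fixed vertex (each of the $N=\binom{n-1}{t-1}$ potential hyperedges through a vertex is present with probability $p$ and contributes $t-1$ new neighbours), so the assumption $(t-1)^dN^dp^d/n=\log(n^2/c)$ plays the role that $p^dn^{d-1}=\log(n^2/c)$ played in the graph case. I would estimate $\Prob(d_{\mathcal H}(x,y)>d)$ by analysing the breadth-first growth of the neighbourhood of $x$ and the event that it fails to reach $y$ within $d$ steps; a Chernoff bound (the last Fact) controls the size of each neighbourhood shell so that the dominant contribution to remoteness is the ``last-step'' failure, yielding $q=(1+o(1))e^{-(t-1)^dN^dp^d/n}=(1+o(1))c/n^2$.

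Next I would verify the positive-dependence hypothesis $W_n+1-X_J\le_{st}W^*$ required by Fact~\ref{ft:TVbound}. Here I would invoke Fact~\ref{W size biased lemma} to describe $W^*=1+\sum_{\alpha\neq J}X_\alpha^J$ with $X_\alpha^J\overset d=(X_\alpha\mid X_J=1)$, and argue that conditioning on a pair being remote can only make other pairs \emph{more} likely to be remote. The clean way to see this is that each $X_\alpha$ is a \emph{decreasing} function of the hyperedge-indicator variables (adding hyperedges can only shorten distances), so the remoteness events are all decreasing events in the product space $\{0,1\}^{\binom nt}$; the Harris--FKG inequality (Fact~\ref{ft:fkg}) then gives the required stochastic domination, establishing positive dependence. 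This lets me apply Fact~\ref{ft:TVbound} to bound $d_{TV}(W_n,\Poi(\E W_n))$ by
\[
\frac{1-e^{-\E W_n}}{\E W_n}\Bigl[\var(W_n)-\E W_n+2\sum_{\alpha\in I}q^2\Bigr].
\]

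The main work, and the step I expect to be the principal obstacle, is showing that the bracketed quantity tends to $0$, which reduces to proving $\var(W_n)-\E W_n\to 0$, equivalently that the pair-covariances are asymptotically negligible. Writing $\var(W_n)-\E W_n=\sum_{\alpha\neq\beta}\bigl[\Prob(X_\alpha=1,X_\beta=1)-q^2\bigr]$, I must show the total covariance vanishes. The difficulty particular to the hypergraph setting is that two pairs $\alpha,\beta$ can share dependence not only through a common vertex but through common hyperedges, and a single hyperedge of size $t$ simultaneously links $\binom t2$ vertex-pairs; so the dependency neighbourhood of a pair is larger and must be bounded carefully. The strategy is to split the sum according to how many vertices $\alpha$ and $\beta$ share: for disjoint $\alpha,\beta$ the neighbourhood-exploration processes are nearly independent and the covariance is $o(1/n^4)$ after summing over $O(n^4)$ such terms, while for pairs sharing a vertex (of which there are $O(n^3)$) one must show the conditional remoteness probability is still $(1+o(1))q$ so that each covariance is $o(1/n^3)$. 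Throughout, the term $(t-1)Np\to\infty$ (the dense-regime condition) is what guarantees the neighbourhood shells concentrate and makes these covariance estimates go through; controlling the constants uniformly in $t$ and carrying the $\binom{n-1}{t-1}$ combinatorics through the Chernoff estimates is the genuinely delicate part. Once $\var(W_n)-\E W_n\to0$ is secured, Fact~\ref{ft:TVbound} gives $d_{TV}(W_n,\Poi(\E W_n))\to0$, and combining with $\E W_n\to c/2$ and Fact~\ref{fact1} yields $d_{TV}(W_n,\Poi(c/2))\to0$, which implies $W_n\xrightarrow d\Poi(c/2)$ as claimed.
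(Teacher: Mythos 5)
Your proposal follows essentially the same route as the paper: the size-biased coupling form of the Stein--Chen bound (Fact~\ref{ft:TVbound}), positive dependence of the remoteness indicators via the Harris--FKG inequality (the paper phrases this by reversing the partial order so that $\{X_\alpha=1\}$ is increasing, which is equivalent to your observation that each $X_\alpha$ is decreasing in the hyperedge indicators), first- and second-moment estimates $\Prob(X_\alpha=1)\approx c/n^2$ and $\Prob(X_\alpha=1,X_\beta=1)\approx c^2/n^4$ obtained by Chernoff-controlled neighbourhood growth with the same case split on whether $\alpha$ and $\beta$ share a vertex, and finally the triangle inequality together with Fact~\ref{fact1}. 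The steps you flag as delicate (the covariance estimates and carrying the $\binom{n-1}{t-1}$ combinatorics through the shell concentration) are exactly where the paper expends its effort, in Lemmas~\ref{lemma 14}--\ref{lemma 19} and Lemmas~\ref{X alpha=1 in hypergraph}--\ref{X alpha=1 beta=1 in hypergraph}.
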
   
\noindent We apply the Stein-Chen method, specifically Fact \ref{ft:TVbound}, to prove \Cref{proposition 1} and \cref{proposition 2}. This proof technique differs from that in \cite{bollobas1981diameter}. Sections \ref{sec:prop1} and \ref{sec:prop2} are dedicated to proving these propositions.

\subsection{Notations}
In this subsection, we define some notations that will be used in the rest of the paper.
Recall $\mathcal{G}[n]$ denotes the set of all graphs on $n$ vertices $[n]:=\{1,\ldots,n\}$.
For $G \in \mathscr{G}[n]$, $x\in [n]$ and $k\in \N$, we denote $\Gamma_k (x)$ and $N_k (x) $ for the set of vertices at distance $k$   and  within distance $k$ from $x$ respectively, that is,
\[
\Gamma_k (x) := \{ y\in V(G) : d_G(x,y)=k \} \quad \text{and} \quad N_k(x):= \cup_{i=0}^{k} \Gamma_i (x).
\]
Let  $d \geq 2$ be a positive integer. Observe that $diam (G)= d$ if and only if $N_d (x)=[n]$ for every vertex $x$ and $N_{d-1} (y) \neq [n] $ for some vertex $y$.

 With abuse of notation, we use the same notation $\Gamma_k (x)$ and  $N_k (x)$   for the hypergraph $\mathcal{H} \in \mathscr{H}[n]$, which are defined in a similar manner.

We also use the notation $f(n)= o(g(n))$ as $n\to \infty$ if  $f(n)/g(n) \to 0$ as $n \to \infty$. 
 We write $f(n)\approx g(n) $ as $n\to \infty$ if $f(n)/g(n)\to 1$ as $n\to \infty$.
 

\section{Proof of \cref{Bollobas theorem}}\label{sec:ER}
This section, we give the proof of \Cref{Bollobas theorem}. We first complete the proof of \Cref{Bollobas theorem} using \Cref{proposition 1} and the following lemma.

\begin{lemma} \label{probability comparision}
	Let $ 0 \leq p_1 < p_2 \leq 1$, and $r$ be a positive integer, then 
	\[
	\Prob _1 \left( \text{diam}(G)\leq r \right) \leq \Prob _2 \left( \text{diam}(G)\leq r \right),
	\]
	where $\Prob _i $ denotes the probability in the space $ \mathscr{G} (n,p_i ) , 1 \leq i \leq 2$.
\end{lemma}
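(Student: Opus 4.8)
The plan is to deduce this inequality from a single \emph{monotone coupling} of the two random graphs, exploiting the fact that $\{\operatorname{diam}(G)\le r\}$ is an increasing event with respect to edge inclusion. The key structural observation is that distances can only shrink when edges are added: if $G\subseteq G'$ as edge sets on the common vertex set $[n]$, then every path in $G$ is also a path in $G'$, so $d_{G'}(x,y)\le d_G(x,y)$ for all $x,y\in[n]$, and hence $\operatorname{diam}(G')\le \operatorname{diam}(G)$. In particular, the indicator of $\{\operatorname{diam}(G)\le r\}$ is a nondecreasing function of the edge set.

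Next I would construct the coupling explicitly. Assign to each potential edge $e\in I$ (equivalently, each unordered pair from $[n]$) an independent uniform random variable $U_e$ on $[0,1]$, all defined on one common probability space. For $i\in\{1,2\}$ let $G_i$ be the graph consisting of exactly those edges $e$ with $U_e\le p_i$. Then $G_i$ has distribution $\mathscr{G}(n,p_i)$, and since $p_1<p_2$ we have $\{e: U_e\le p_1\}\subseteq\{e: U_e\le p_2\}$ pointwise, that is, $E(G_1)\subseteq E(G_2)$ almost surely.

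Combining the two steps, on this coupling $\operatorname{diam}(G_2)\le \operatorname{diam}(G_1)$ holds surely, so $\{\operatorname{diam}(G_1)\le r\}\subseteq\{\operatorname{diam}(G_2)\le r\}$. Taking probabilities yields
\[
\Prob_1\!\left(\operatorname{diam}(G)\le r\right)=\Prob\!\left(\operatorname{diam}(G_1)\le r\right)\le \Prob\!\left(\operatorname{diam}(G_2)\le r\right)=\Prob_2\!\left(\operatorname{diam}(G)\le r\right),
\]
which is exactly the claim. I do not expect any genuine obstacle here: the only point requiring care is the monotonicity of the diameter under edge addition, which is the elementary observation recorded in the first step. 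Equivalently, one may phrase the argument as follows: since $\{\operatorname{diam}(G)\le r\}$ is an increasing event, its probability is a nondecreasing function of $p$, which is the standard consequence of the monotone coupling above.
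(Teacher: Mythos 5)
Your proposal is correct and follows essentially the same route as the paper: both construct the monotone coupling via i.i.d.\ uniform variables on the potential edges, observe that the resulting graph for $p_1$ is almost surely a subgraph of the one for $p_2$, and use the fact that adding edges cannot increase distances (hence $\{\operatorname{diam}(G)\le r\}$ is an increasing event). No substantive difference from the paper's argument.
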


 For the sake of completeness, we provide a proof of \Cref{probability comparision} at the end of this section. A proof of \Cref{proposition 1} is provided in the next section; this proof technique is different from that used in \cite{bollobas1981diameter}. For completeness, we proceed to prove \Cref{Bollobas theorem} following the methods as in  \cite{bollobas1981diameter}.

\begin{proof}[Proof of \Cref{Bollobas theorem}]
	Observe that, for a positive integer  $d$,  if there is no remote pair of vertices of $G$, then the diameter of $G$ is less than or equal to $d$.
	Therefore \cref{proposition 1} implies that
	\begin{align}\label{eqn:lessd}
	 \Prob \left( diam(G)\leq d \right) = \Prob \left( W_n =0 \right) \to e^{-c/2}, \mbox{ as $n\to \infty$.} 
	\end{align}
Note that if $diam(G)\le 1$ then the $G$ is a complete graph.	Therefore we have
	\[
	 \Prob \left( diam( G)\leq 1\right) =p^{n(n-1)/2} \to 0,\ \mbox{ as } n\to \infty. 
	\]
	Suppose that $d\geq 3$. For given $L_1>0$, choose $0 < p_1 < 1$ such that 
	\[  n^{d-1} p_1^{d} = \log \left( \frac{n^2}{L_1} \right).
	\]
	It is easy to see that  $p < p_1$. Then applying \cref{probability comparision}, we obtain
	\[
	\Prob \left( diam(G)\leq d-1\right) \leq \Prob _1 \left( diam(G)\leq d-1\right) \to e^{-L_1 /2},
	\]
	where $\Prob_1$ denotes the probability in the space $\mathscr{G} \left( n,p_1 \right)$. Since $L_1$ is arbitrary, choosing $L_1 \to \infty$, we get
	\( \Prob \left( diam(G)\leq d-1\right) \to 0 . \)
	Thus, for every $d\geq 2 ,$ we have 
	\begin{equation} \label{eq 1}
		\lim_ {n \to \infty} \Prob \left( diam(G)\leq d-1\right) = 0 . 
	\end{equation} 
	Therefore, combining \eqref{eqn:lessd}  and \eqref{eq 1}, we obtain the first part of the theorem, that is,
	\[
	\lim_{n\to \infty} \Prob \left( diam(G)=d \right)=e^{-\frac{c}{2}}.
	\]
	On the other hand,  for given $L_2>0$, choose $0 < p_2 < 1$ such that 
	\[ n^{d} p_2^{d+1} = \log \left( \frac{n^2}{L_2} \right).
	\]
	Then, $p_2 < p$ and hence applying \cref{probability comparision}, we obtain
	\[
	\Prob_2 \left( diam(G)\leq d+1\right) \leq \Prob \left( diam(G)\leq d+1\right) .
	\]
	Which further implies that 
	\[
	\Prob \left( diam(G) > d+1\right) \leq \Prob_2 \left( diam(G) > d+1 \right) \to 1- e^{-L_2/2} .
	\]
	Choosing $ L_2 \to 0$, we obtain
	\begin{equation} \label{eq 2}
		\lim_ {n \to \infty} \Prob \left( diam(G) > d +1\right) = 0.
	\end{equation}
	We conclude the proof of the theorem using \eqref{eqn:lessd} and \eqref{eq 2}.
\end{proof}

\begin{proof}[Proof of \cref{probability comparision}]
	Let $ E = \left\{ \{i,j\} : 1 \leq i < j \leq n \right\}$ denote the set of all $ \binom {n}{2} $ edges on the vertex set $ [n]$.  Suppose that  $ X= \left( X_{ij} : \{ i,j \} \in E \right)$ and $ Y= \left( Y_{ij} : \{ i,j \}  \in E \right)$ are two random vectors, where $ X_{ij} $ and $ Y_{ij}, 1 \leq i < j \leq n,$ are independent and identically distributed (i.i.d.) Bernoulli($p_1$) and Bernoulli($p_2$) random variables respectively. On the other hand, suppose $ Z= \left( Z_{ij}: \{ i,j \} \in E \right)$
	on $ [0,1]^{ \binom {n}{2}} $, where $ Z_{ij}, 1 \leq i < j \leq n $ are i.i.d. uniform random variables on $ [0,1]$. Define 
	\[
	X^{\prime}= \left( \mathbbm{1}_ { \{ Z_{ij} \leq p_1 \} } : \{i,j\}\in E\right)\;\; \and\;\; Y^{\prime}= \left( \mathbbm{1}_ { \{ Z_{ij} \leq p_2 \} } : \{i,j\}\in E \right).
	\]
	It is clear that $ X^{\prime} \overset{d}{=} X $ and $ Y^{\prime} \overset{d}{=} Y $. Thus $ \left( X^{\prime}, Y^{\prime} \right)$ is a coupling of $X$ and $Y$. Note that, for  $p_1< p_2$, we have 
		\[
	\mathbbm{1}_ { \{ Z_{ij} \leq p_1 \} } \left( w_{ij} \right) \leq  \mathbbm{1}_ { \{ Z_{ij} \leq p_2 \} } \left( w_{ij} \right), \text{ for all }  w_{ij} \in [0,1] .
	\]
Observe that, for each $w\in [0,1]^{\binom{n}{2}}$, we can identify $X'(w)$ with a graph in $\mathcal G[n]$.	Thus $X^{\prime} (w)$ is a subgraph of $Y^{\prime} (w)$ for each $w\in [0,1]^{\binom{n}{2}}$. Therefore, for $(x,y)\in I$,
	\begin{align*}
		\Prob_1 \left( \{ G \in \mathscr{G}[n] :d_G (x,y) \leq r \} \right) 
		&= \Prob \left( \{  w \in [0,1]^{ \binom {n}{2} } :X^{\prime}(w )= G \text{ s.t. } d_G (x,y) \leq r \} \right) \\
		&\leq \Prob \left( \{  w \in [0,1]^{ \binom {n}{2} } :Y^{\prime}(w )= G \text{ s.t. } d_G (x,y) \leq r \} \right) \\
		&= \Prob \circ (Y^{\prime} )^{-1}\left( \{ G \in \mathscr{G}[n] :d_G (x,y) \leq r \} \right) \\
		&= \Prob_2 \left( \{ G \in \mathscr{G}[n] :d_G (x,y) \leq r \} \right).
	\end{align*}
	Thus,  $\Prob_1 \left( d_G (x,y) \leq r, \text{ for all } (x,y) \in I \right) \leq \Prob_2 \left( d_G (x,y) \leq r, \text{ for all } (x,y) \in I \right)$. Hence
	\[
	 \Prob _1 \left( \text{diam}(G)\leq r \right) \leq \Prob _2 \left( \text{diam}(G)\leq r \right).
	\]
	This completes the proof.
\end{proof}


\section{Proof of \Cref{proposition 1}}\label{sec:prop1}
In this section, we provide the proof of \Cref{proposition 1}, which explains the key techniques and calculations used in the paper and will be further utilized in the hypergraph setup in \Cref{sec:HG}. This will enhance the paper's readability and clarity. The following lemmas will be used in the proof of \Cref{proposition 1}.

\begin{lemma} \label{X alpha=1}
	Let $c,d,n,p$  be as in Assumption \ref{ass bollobas}, and $G \in \mathscr{G}(n,p)$.  Suppose $I$ is the index set as defined in \eqref{def I}. Then, each $\alpha\in I$,
\begin{align*}
\Prob \left( X_ \alpha =1 \right) \approx \frac{c}{n^2}, \mbox{ as } n\to \infty,
\end{align*}	
where $X_\alpha$ is as defined in \eqref{eqn:xalpha}.
\end{lemma}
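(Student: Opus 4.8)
The plan is to compute $\Prob(X_\alpha=1)=\Prob(d_G(x,y)>d)$ by a single exposure of the edges incident to $y$. Writing $G-y$ for the subgraph induced on $[n]\setminus\{y\}$, note that since the endpoint $y$ is never an interior vertex of a shortest $x$–$y$ path, one has $d_G(x,y)=1+\min_{v\sim y} d_{G-y}(x,v)$; hence $d_G(x,y)>d$ if and only if $y$ has no neighbour in $N_{d-1}(x)$, the radius-$(d-1)$ ball around $x$ computed in $G-y$. The $n-1$ potential edges at $y$ are independent of $G-y$ and each is present with probability $p$, so conditioning on $G-y$ gives the exact identity
\[
\Prob(X_\alpha=1)=\E\!\left[(1-p)^{|N_{d-1}(x)|}\right],
\]
where the expectation is over $G-y\sim\mathscr{G}(n-1,p)$. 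This identity is the heart of the argument: it reduces the statement to understanding the size of the $(d-1)$-ball around a vertex.

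Next I would show that $|N_{d-1}(x)|$ concentrates sharply around $(pn)^{d-1}$. Exposing the breadth-first layers $\Gamma_0(x),\Gamma_1(x),\dots$ one at a time, the conditional law of $|\Gamma_{k+1}(x)|$ given the exploration through level $k$ is $\mathrm{Bin}\!\left(n-1-|N_k(x)|,\,1-(1-p)^{|\Gamma_k(x)|}\right)$, a sum of independent Bernoullis to which Chernoff's bound applies. Since Assumption~\ref{ass bollobas} forces $pn\gg\log n$ and $p\,|\Gamma_k(x)|\to 0$ for $k\le d-2$, an induction on $k$ yields $|\Gamma_{k+1}(x)|=(1+o(1))\,pn\,|\Gamma_k(x)|$, and hence $|N_{d-1}(x)|=(1+o(1))(pn)^{d-1}$. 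The crucial point is the \emph{strength} of this concentration: the target exponent $p\,(pn)^{d-1}=p^d n^{d-1}=\log(n^2/c)$ grows like $2\log n$, so a relative error $\delta$ in $|N_{d-1}(x)|$ produces an additive error $\asymp\delta\log n$ in the exponent, forcing the requirement $\delta=o(1/\log n)$. Taking $\delta=(\log n)^{-2}$ in Chernoff's bound makes the failure probability at each layer $\exp(-\Omega(\mathrm{poly}(n)))$, so a union bound over the $d-1$ layers gives
\[
\Prob\!\left(\,\big|\,|N_{d-1}(x)|-(pn)^{d-1}\,\big|>(\log n)^{-2}(pn)^{d-1}\right)=o(n^{-2}).
\]

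Finally I would combine the two ingredients. On the high-probability event that $|N_{d-1}(x)|=(pn)^{d-1}\big(1+O((\log n)^{-2})\big)$, the expansion $(1-p)^{m}=\exp(-pm+O(p^2m))$ together with $p^2|N_{d-1}(x)|\asymp p\log(n^2/c)\to 0$ gives $(1-p)^{|N_{d-1}(x)|}=(1+o(1))\exp(-p^d n^{d-1})=(1+o(1))\,c/n^2$, so this event contributes $(1+o(1))\,c/n^2$ to the expectation. On the complementary event I bound $(1-p)^{|N_{d-1}(x)|}\le 1$ and use that its probability is $o(n^{-2})$, so its contribution is negligible against $c/n^2$. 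Adding the two gives $\Prob(X_\alpha=1)=(1+o(1))\,c/n^2$, i.e. $\Prob(X_\alpha=1)\approx c/n^2$. I expect the main obstacle to be the second step: securing concentration of the ball size with the sharp relative error $o(1/\log n)$ and a super-polynomially small failure probability, since it is precisely this sharpness — not a mere $(1+o(1))$ factor — that pins down the constant $c$ in the exponent.
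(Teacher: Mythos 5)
Your proposal is correct, and its overall architecture coincides with the paper's: an exact conditional formula of the form $(1-p)^{(\text{neighbourhood size})}$ for the remoteness probability, concentration of the $(d-1)$-neighbourhood of $x$ around $(pn)^{d-1}$ with relative error $o(1/\log n)$ (which, as you rightly stress, is the precision needed because the exponent $p(pn)^{d-1}=\log(n^2/c)$ is of order $\log n$), and a split of the expectation over the good and bad events. Two implementation differences are worth noting. First, you expose the edges at $y$ last and work with the ball $N_{d-1}(x)$ in $G-y$, which makes the identity $\Prob(X_\alpha=1)=\E\bigl[(1-p)^{|N_{d-1}(x)|}\bigr]$ exact; the paper conditions on the sphere $|\Gamma_{d-1}(x)|$ instead, which differs from the ball by $|N_{d-2}(x)|=O((pn)^{d-2})$, a correction that changes the exponent only by $O(\log n/(pn))=o(1)$ and is therefore harmless --- your version is slightly cleaner on this point. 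Second, and more substantively, your concentration step uses that, given the exploration up to level $k$, the next layer size is exactly $\mathrm{Bin}\bigl(n-1-|N_k(x)|,\,1-(1-p)^{|\Gamma_k(x)|}\bigr)$, a sum of independent indicators over unexplored vertices; the paper instead counts the edges $E_k(x)$ leaving $\Gamma_{k-1}(x)$ (\cref{E_k(x)} and \cref{lemma 4}) and must then control multiplicities --- the combinatorial argument bounding the probability that more than ten edge endpoints coincide --- in order to convert an edge count into a vertex count. Your vertex-level binomial avoids that collision analysis entirely and is the more economical route for graphs; the paper's edge-based bookkeeping is presumably chosen because it transfers more directly to the hypergraph setting of \Cref{sec:prop2}, where a hyperedge contributes up to $t-1$ new vertices and the collision accounting becomes unavoidable. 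Either choice of relative error ($(\log n)^{-2}$ for you, $\sqrt{L\log n/(pn)}$ in the paper) satisfies the decisive requirement $\delta\log n\to 0$, since $pn=(n\log(n^2/c))^{1/d}$ is polynomially large for fixed $d$, so both yield the same conclusion.
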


\begin{lemma} \label{X alpha and X beta=1}
		Let $c,d,n,p$  be as in Assumption \ref{ass bollobas}, and $G \in \mathscr{G}(n,p)$.  Suppose $I$ is the index set as defined in \eqref{def I}. Then, for $\alpha,\beta \in I$ and $\alpha\neq \beta$,
	\begin{align*}
		\Prob \left( X_ \alpha =1,X_\beta = 1 \right) \approx \frac{c^2}{n^4}, \mbox{ as } n\to \infty,
	\end{align*}	
	where $X_\alpha$ is as defined in \eqref{eqn:xalpha}.
\end{lemma}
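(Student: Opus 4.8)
The plan is to show that the two decreasing events $\{X_\alpha=1\}=\{d_G(x,y)>d\}$ and $\{X_\beta=1\}=\{d_G(u,v)>d\}$, where $\alpha=(x,y)$ and $\beta=(u,v)$ with $\alpha\neq\beta$, are asymptotically independent, so that their joint probability factorizes into the product of the marginals computed in \Cref{X alpha=1}. For the lower bound this is immediate: adding edges can only decrease distances, so both events are \emph{decreasing} in the edge configuration, and the Harris--FKG inequality (\Cref{ft:fkg}) gives $\Prob(X_\alpha=1,X_\beta=1)\ge\Prob(X_\alpha=1)\Prob(X_\beta=1)\approx c^2/n^4$. The real content of the lemma is therefore the matching upper bound.

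For the upper bound I would first record the characterization $\{d_G(x,y)>d\}=\{y\notin N_{d-1}(x)\}\cap\{y\text{ has no neighbour in }\Gamma_{d-1}(x)\}$ and run a breadth-first exploration of $N_{d-1}(x)$ from $x$. Such an exploration reveals only the edges incident to $N_{d-2}(x)$, so conditionally on $N_{d-1}(x)$ (and on $y\notin N_{d-1}(x)$, which already forces $y$ to have no neighbour in $N_{d-2}(x)$) the edges joining $y$ to $\Gamma_{d-1}(x)$ are untouched, independent $\mathrm{Bernoulli}(p)$ variables. Hence $\Prob(X_\alpha=1\mid N_{d-1}(x))=(1-p)^{|\Gamma_{d-1}(x)|}$. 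A Chernoff bound on the layer sizes, together with the growth relation $|\Gamma_k(x)|\approx np\,|\Gamma_{k-1}(x)|$, shows $|\Gamma_{d-1}(x)|$ concentrates around $(np)^{d-1}=o(n)$; since $p\,(np)^{d-1}=n^{d-1}p^d=\log(n^2/c)$, this reproduces $(1-p)^{|\Gamma_{d-1}(x)|}\approx c/n^2$ and sets up the joint estimate.

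To estimate the joint probability I would explore the balls relevant to both pairs and reduce $\{X_\alpha=1,X_\beta=1\}$ to a product of ``no neighbour'' conditions carried by disjoint, hence independent, collections of fresh edges. The key maneuver is that, for each pair, one is free to choose the endpoint around which to grow the ball, and this freedom lets one arrange the two terminal conditions to involve (almost) disjoint edge sets. A short case analysis is needed according to how $\alpha$ and $\beta$ meet: if they share a vertex, say $\beta=(x,v)$, the two conditions concern edges leaving the distinct vertices $y$ and $v$ into the common boundary $\Gamma_{d-1}(x)$ and are genuinely independent, giving $(1-p)^{2|\Gamma_{d-1}(x)|}\approx c^2/n^4$; if $\alpha$ and $\beta$ are vertex-disjoint, one grows $N_{d-1}(x)$ and $N_{d-1}(u)$ and must control the overlap of their boundaries. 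A first-moment estimate gives $\E|\Gamma_{d-1}(x)\cap\Gamma_{d-1}(u)|=O\big((np)^{2(d-1)}/n\big)$, and using $n^{d-1}p^d=\log(n^2/c)$ one checks $p\cdot(np)^{2(d-1)}/n\to 0$; hence the correction factor $(1-p)^{-|\Gamma_{d-1}(x)\cap\Gamma_{d-1}(u)|}=1+o(1)$, and the conditional joint probability factorizes as $(1-p)^{|\Gamma_{d-1}(x)|}(1-p)^{|\Gamma_{d-1}(u)|}(1+o(1))$. In every case this yields $\Prob(X_\alpha=1,X_\beta=1)\le(1+o(1))c^2/n^4$.

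The main obstacle is precisely this dependency bookkeeping in the upper bound: arranging the two breadth-first explorations so that the terminal ``no neighbour'' events are measurable with respect to disjoint edge sets, and quantifying, via Chernoff and first-moment bounds, that the neighbourhood overlaps and the perturbation caused by conditioning one remote event on the other alter the estimate by only a multiplicative $1+o(1)$. Combining the FKG lower bound with this upper bound gives $\Prob(X_\alpha=1,X_\beta=1)\approx c^2/n^4$, as claimed.
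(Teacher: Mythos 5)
Your argument is correct and its computational core coincides with the paper's: both condition on the $(d-1)$-neighbourhood, use the Chernoff-based concentration $|\Gamma_{d-1}(\cdot)|\approx(np)^{d-1}$ (the paper's \Cref{lemma 6} and \Cref{lemma 8}), write the terminal events as ``no fresh edge into the boundary'' so the conditional probability is $(1-p)^{\text{(size of the relevant edge set)}}$, and split into cases according to whether $\alpha$ and $\beta$ share a vertex. Two genuine differences are worth noting. First, you obtain the lower bound for free from the Harris--FKG inequality applied to the two decreasing events $\{X_\alpha=1\}$ and $\{X_\beta=1\}$ together with \Cref{X alpha=1}; the paper instead extracts both bounds from the same conditional computation. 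Your route is slicker for that half and fits naturally with the paper's use of Fact \ref{ft:fkg} elsewhere. Second, your case analysis is organized the opposite way: in the shared-vertex case you grow a single ball around the common vertex, so the two terminal conditions live on the disjoint edge stars at $y$ and $v$ and factor exactly, with no need for an intersection estimate -- arguably cleaner than the paper's Case II, which grows balls around the two distinct centres and must invoke \Cref{lemma:intersection} to control $|\Gamma_{d-1}(x)\cap\Gamma_{d-1}(z)|$. Conversely, you deploy the overlap/first-moment estimate in the vertex-disjoint case, where it is not really where the dependency lives: there the two terminal edge stars are automatically disjoint up to the single edge $\{y,v\}$, and the issue is rather the joint concentration of the two layer sizes, which the paper dispatches by a union bound over the two marginal concentration events. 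Your estimate $p\cdot(np)^{2(d-1)}/n\to 0$ is nevertheless exactly the computation underlying \Cref{lemma:intersection}, so nothing is lost; the bookkeeping is sound either way.
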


\begin{lemma}\label{lem:positivelyrelated}
	Let $\{X_\alpha\suchthat \alpha\in I\}$ be the Bernoulli random variables as defined in \eqref{eqn:xalpha}. Suppose $J, W$ and $W^*$ be as defined in Fact \ref{ft:TVbound} then 
	\[
	W+1-X_J\le_{\tiny st} W^*.
	\]
\end{lemma}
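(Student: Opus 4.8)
The plan is to verify the positive-dependence condition of Fact~\ref{ft:TVbound} by reducing it, through the external randomization $J$, to a one-index correlation inequality that the Harris--FKG inequality (Fact~\ref{ft:fkg}) then supplies. Since $J$ is independent of the graph, conditioning on $\{J=\gamma\}$ turns the left-hand side into $W+1-X_J=1+\sum_{\alpha\neq\gamma}X_\alpha$, while Fact~\ref{W size biased lemma} turns the right-hand side into $W^*=1+\sum_{\alpha\neq\gamma}X_\alpha^\gamma$ with $X_\alpha^\gamma\eqd(X_\alpha\mid X_\gamma=1)$ jointly over $\alpha$. Because stochastic order is preserved under mixing by a common mixing variable, it suffices to prove, for each fixed $\gamma\in I$,
\[
\sum_{\alpha\neq\gamma}X_\alpha\ \le_{st}\ \Bigl(\sum_{\alpha\neq\gamma}X_\alpha\ \Big|\ X_\gamma=1\Bigr),
\]
and then to average over $\gamma$ using the law of $J$.

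First I would record the monotonicity that drives everything: adding an edge can never increase a distance, so each remote event $\{X_\alpha=1\}=\{d_G(x,y)>d\}$ from \eqref{eqn:xalpha} is a \emph{decreasing} event on the product space $\{0,1\}^{\binom{n}{2}}$ of edge indicators, equipped with the product Bernoulli($p$) measure. The same reasoning shows that, for every threshold $t$, the event $\{\sum_{\alpha\neq\gamma}X_\alpha>t\}$ is decreasing, since deleting edges can only enlarge each $X_\alpha$ simultaneously and hence the sum. Applying the Harris--FKG inequality (Fact~\ref{ft:fkg}) to the two decreasing events $\{X_\gamma=1\}$ and $\{\sum_{\alpha\neq\gamma}X_\alpha>t\}$ gives
\[
\Prob\Bigl(\sum_{\alpha\neq\gamma}X_\alpha>t,\ X_\gamma=1\Bigr)\ \ge\ \Prob\Bigl(\sum_{\alpha\neq\gamma}X_\alpha>t\Bigr)\Prob\bigl(X_\gamma=1\bigr),
\]
that is, $\Prob(\sum_{\alpha\neq\gamma}X_\alpha>t\mid X_\gamma=1)\ge\Prob(\sum_{\alpha\neq\gamma}X_\alpha>t)$ for all $t$. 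This is precisely the displayed stochastic domination, and mixing over $J$ via $\Prob(W+1-X_J>s)=\sum_\gamma\Prob(J=\gamma)\Prob(1+\sum_{\alpha\neq\gamma}X_\alpha>s)$ then yields $W+1-X_J\le_{st}W^*$.

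The point requiring the most care is that the size-biased variable $W^*$ involves the \emph{joint} conditional law $(X_\alpha\mid X_\gamma=1)_{\alpha}$, so it is not enough to dominate each $X_\alpha$ marginally. This is exactly why I apply Fact~\ref{ft:fkg} to the single decreasing event $\{\sum_{\alpha\neq\gamma}X_\alpha>t\}$ rather than to the individual indicators: the inequality then controls the entire sum at once, which is what the stochastic order of the sum demands. Everything else — that remote events are decreasing in the edge configuration, that conditioning on $\{J=\gamma\}$ produces the two displayed expressions by independence of $J$, and that $\le_{st}$ is stable under a common mixture — is routine bookkeeping, so I do not anticipate a serious obstruction beyond correctly justifying that $\{\sum_{\alpha\neq\gamma}X_\alpha>t\}$ is genuinely a decreasing event in the edge set.
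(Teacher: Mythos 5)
Your proposal is correct, and while it follows the paper's overall skeleton (condition on $\{J=\gamma\}$ using independence of $J$, identify the conditional laws of $W+1-X_J$ and $W^*$, then invoke Harris--FKG), it differs from the paper's argument at the decisive step in a way worth noting. The paper applies FKG only to pairs of indicator events to get the one-dimensional dominations $X_\beta\le_{st}X_\beta^\alpha$ for each $\beta\neq\alpha$ separately, then builds a monotone coupling $(\tilde X_\beta,\tilde X_\beta^\alpha)$ for each $\beta$ and sums; as written this implicitly requires a \emph{joint} coupling of the two vectors under which every coordinate is simultaneously dominated, which coordinate-wise stochastic domination alone does not furnish. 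You instead apply FKG once to the two decreasing events $\{\sum_{\alpha\neq\gamma}X_\alpha>t\}$ and $\{X_\gamma=1\}$ --- correctly observing that each $X_\alpha$ is a decreasing function of the edge configuration, hence so is the sum and hence its super-level sets are decreasing events --- which yields $\Prob(\sum_{\alpha\neq\gamma}X_\alpha>t\mid X_\gamma=1)\ge\Prob(\sum_{\alpha\neq\gamma}X_\alpha>t)$ for every $t$ and therefore the stochastic domination of the entire sum in one stroke. Since $W^*$ given $J=\gamma$ is $1$ plus the sum under the joint conditional law given $X_\gamma=1$, this is exactly what is needed, and your explicit remark that marginal domination would not suffice pinpoints the subtlety that the paper's version glosses over. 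Averaging over $\gamma$ with the law of $J$ then finishes the proof as you describe; the argument is complete and, if anything, tighter than the one in the paper.
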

\begin{proof}[Proof of \Cref{lem:positivelyrelated}]
Recall, the probability space $\mathcal G(n,p)=(\mathcal G[n],\mathcal F,\mathbb P)$ where the random variables $\{X_\alpha\}_{\alpha\in I}$ are defined.  Observe that the random variables $\{X_\alpha\}_{\alpha\in I}$ are exchangeable. Thus it is enough show that,  for fixed $\alpha\in I$, 
\begin{align}\label{eqn:alphast}
\sum_{\beta\neq \alpha}X_\beta \le_{\tiny st}\sum_{\beta\neq \alpha}X_\beta^\alpha.
\end{align}
Indeed, as $J$ is independent of $\{X_\alpha\}_{\alpha\in I}$, for $t \in \mathbb{R}$, from \eqref{eqn:alphast} we have
\begin{align*}
\Prob(W+1-X_J>t)&=\sum_{\alpha\in I}\Prob(W+1-X_J>t\given J=\alpha)\Prob(J=\alpha)
\\&=\sum_{\alpha\in I}\Prob(W+1-X_\alpha>t)\Prob(J=\alpha)
\\&\le \sum_{\alpha\in I}\Prob(1+\sum_{\beta\neq \alpha}X_\beta^\alpha>t)\Prob(J=\alpha)
\\&=\sum_{\alpha\in I}\Prob(1+\sum_{\beta\neq \alpha}X_\beta^\alpha>t\given J=\alpha)\Prob(J=\alpha)
\\&\le \Prob(W^*>t).
\end{align*}
Hence the result, that is, $W+1-X_J\le_{\tiny st} W^*$. It remains to prove \eqref{eqn:alphast}. We first show that $X_\beta\le_{\tiny st} X_\beta^\alpha$. We have $X_\beta^{\alpha}\stackrel{d}{=}(X_\beta\given X_\alpha=1)$. In particular, we have
\(
\Prob(X_\beta^\alpha=1)=\Prob(X_\beta=1\given X_\alpha=1).
\)
Thus it is equivalent to show that 
\[
\Prob(X_\beta=1, X_\alpha=1)\ge \Prob(X_\beta=1)\Prob(X_\alpha=1).
\]
Observe that $\mathcal G(n,p)$ can be viewed as $(\Omega, \mathcal F, \Prob)$, where $\Omega=\{0,1\}^{\binom{n}{2}}$ and $\Prob$ is the product measure on it. We define the following partial order on $\Omega $ defined as
\[
w\prec w' \mbox{ if } w(u)\ge w'(u),  \mbox{ for all } u\in \l[\binom{n}{2}\r].
\]
Then it is easy to see that $\{X_\alpha=1\}$ is an increasing event, for each $\alpha\in I$.  Therefore by the Harris-FKG inequality, Fact \ref{ft:fkg}, we have
\[
\Prob(X_\beta=1, X_\alpha=1)\ge \Prob(X_\beta=1)\Prob(X_\alpha=1).
\]
Which implies that $X_\beta\le_{\tiny st} X_\beta^\alpha$. Therefore there exists a coupling $(\tilde X_\beta,\tilde X_\beta^{\alpha})$ of $(X_\beta,X_\beta^\alpha)$ such that 
\begin{align}\label{eqn:lessthan}
\tilde X_\beta\le \tilde X_\beta^{\alpha}, \mbox{ almost surely}.
\end{align}
With abuse of notation we use $\Prob$ in every step. By \eqref{eqn:lessthan}, for $t\in \R$, we have 
\begin{align*}
	\Prob(\sum_{\beta \neq \alpha}\tilde X_\beta>t)\le \Prob(\sum_{\beta \neq \alpha}\tilde X_\beta^{\alpha}>t).
\end{align*}
Since $X_\beta$ has the same distribution as $\tilde X_\beta$ and $\tilde X_\beta^{\alpha}$ has same distribution as $X_\beta^{\alpha}$, by the last inequality we have 
\[
\Prob(\sum_{\beta \neq \alpha}X_\beta>t)\le \Prob(\sum_{\beta \neq \alpha}X_\beta^\alpha>t), \mbox{ for $t\in \R$}.
\]
This completes the proof of \eqref{eqn:alphast}. Hence the result.
%
%
\end{proof}

 Now we proceed to prove Proposition \ref{proposition 1}. The proofs of \Cref{X alpha=1} and \Cref{X alpha and X beta=1} will be given in the next two subsections.

  \begin{proof}[Proof of \Cref{proposition 1}]
  	Recall $W_n $ as defined in \eqref{def W_n}, the total number of remote pairs. To prove the result, we show that
  	\[
  	d_{TV} (W_n,Poi(c/2)) \to 0 \,\, \mbox{ as } n\to \infty.
  	\] 
  	By the triangle inequality, we have the following inequality
  	\begin{equation} \label{eq 3}
  		d_{TV} \left(W_n,Poi( c/2 )\right) \leq d_{TV} \left(W_n,Poi(\E W_n)\right) + d_{TV} \left(Poi(\E W_n), Poi(c/2)\right).
  	\end{equation}
  	First we estimate the second term of the right hand side of \eqref{eq 3}.  \cref{X alpha=1} implies 
  	\[
  	\E [W_n]=|I|\Prob \left( X_ \alpha =1 \right) \approx \frac{c}{2},\;\; \mbox{ as } n\to \infty.
  	\]
  	Applying the last equation and Fact \ref{fact1}, we get
  	\begin{align}\label{eqn:2ndterm}
  		d_{TV} \left(Poi(\E [W_n]), Poi(c/2)\right) &\leq \left \vert \E [W_n] - c/2 \right \vert \to 0 \text{ as } n \to \infty.
  	\end{align}
  	To estimate the first term in the right hand side of \eqref{eq 3}, we use \Cref{X alpha and X beta=1}, \Cref{lem:positivelyrelated} and Fact \ref{ft:TVbound}. 
Using \Cref{lem:positivelyrelated} and the inequality $1 - e^{-x} \leq x$ for $x \geq 0$ in Fact \ref{ft:TVbound}, we obtain
  	\begin{equation} \label{eq 4}
  		d_{TV} (W_n,Poi(\E W_n)) 
  		\leq \E (W_n^2) -(\E W_n)^2-\E W_n + 2 \vert I \vert (\E X_\alpha)^2 .
  	\end{equation}
  Recall $W_n=\sum_IX_\alpha$. Then	we have 
  	\begin{align*}
  		\E (W_n^2)&= \sum_{\alpha \in I} \E (X_\alpha^2) + \sum_{\alpha \neq \beta}\E \left(  X_\alpha X_\beta \right)\\ &= |I|{\E (X_\alpha)} + \vert I \vert (\vert I \vert -1) \Prob \left( X_ \alpha =1,X_\beta = 1 \right).
  	\end{align*}
  	By \Cref{X alpha=1} and \Cref{X alpha and X beta=1} we get 
  	\[
  		\E (W_n^2)\approx \frac{c}{2} +\frac{c^2}{4}, \mbox{ as } n\to \infty.
  	\]
  	Putting the values of $\E W_n$, $ \E (W_n ^2)$ and $ \E (X_\alpha) $ in the \eqref{eq 4} we get,
  	\begin{align}\label{eqn:1stterm}
  		d_{TV} \left( W_n,Poi(\E W_n) \right) 
  		& \to 0 \text{ as } n \to \infty .
  	\end{align}
  	Therefore using \eqref{eqn:2ndterm} and \eqref{eqn:1stterm} in \eqref{eq 3} we get the result.
  \end{proof}

\subsection{Proof of \cref{X alpha=1}} In this subsection we provide the proof of \cref{X alpha=1}. We present the proof of \cref{X alpha=1} after proving some auxiliary lemmas.
   For a fixed vertex $x$ and $1 \leq k \leq {d-1}$, suppose $E_k(x)$ denotes the set of edges connecting the vertices of $\Gamma_{k-1} (x)$ to the vertices of $ N_{k-1}^c(x)$, that is,
\[
   E_k(x) = \{e \in  E(G): e \cap \Gamma_{k-1} (x) \neq \emptyset, e \cap N_{k-1}^c (x) \neq \emptyset \} .
\] 
Let $\Omega _{k,x} \subseteq \mathscr{G} [n]$ be the set of graphs for which $ \vert \Gamma _ {k-1} (x) \vert$ and $\vert N_ {k-1} (x) \vert $ satisfy 
\begin{equation} \label{eqn:gammak}
	\frac{1}{2} (pn)^ {k-1} \leq \vert \Gamma _ {k-1} (x) \vert \leq \frac{3}{2} (pn)^ {k-1} \quad \text{and} \quad \vert N_ {k-1} (x) \vert \leq 2 (pn)^ {k-1}.
\end{equation}
In other words, we denote
\begin{equation} \label{def:Omega k,x}
\Omega _{k,x} = \left \{ G \in \mathscr{G} [n]: G \text{ satisfies } \eqref{eqn:gammak}  \right \}.
\end{equation}
The next lemma gives an estimate on $|  E_k(x)|$, for $x\in V$ and $1\le k\le d-1$. 
\begin{lemma} \label{E_k(x)} 
   Let x be a fixed vertex and  $L \geq 72 $ be a constant.
   Define
 \begin{align}\label{eqn:deltak}
   \delta_k = \left [ \frac { L \log n}{ n^k p^k  } \right ]^{1/2}, \mbox{ for $1 \leq k \leq {d-1}$.}
 \end{align}
  Then, for $1 \leq k \leq {d-1}$, for large  $n$ the following holds  
   \[
   \Prob(\{||E_k(x)|-\E[|E_k(x)|]|>\delta_k\E[|E_k(x)|]\}\given \Omega_{k,x})\le 2 n^{-6}.
   \]
\end{lemma}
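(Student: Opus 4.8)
The plan is to run a breadth-first exploration from $x$ and apply the Chernoff bound to $|E_k(x)|$ \emph{conditionally} on the explored neighbourhood, then remove the conditioning. First I would make precise the principle of deferred decisions. Let $\mathcal S_{k-1}$ be the $\sigma$-algebra generated by all edges examined while determining the balls $N_0(x)\subseteq N_1(x)\subseteq\cdots\subseteq N_{k-1}(x)$ and the spheres $\Gamma_0(x),\ldots,\Gamma_{k-1}(x)$. The key point is that an edge counted by $E_k(x)$ joins a vertex $u\in\Gamma_{k-1}(x)$ to a vertex $v\in N_{k-1}^c(x)$, and such a pair has both endpoints in $N_{k-2}^c(x)$; in exploring up to level $k-1$ one only inspects edges emanating from the current frontier $\Gamma_{k-2}(x)$ into $N_{k-2}^c(x)$, so the pair $\{u,v\}$ is never examined. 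Hence, conditionally on $\mathcal S_{k-1}$, the indicators of the $|\Gamma_{k-1}(x)|\cdot|N_{k-1}^c(x)|$ potential cross-edges are i.i.d. $\mathrm{Bernoulli}(p)$, while $\Omega_{k,x}$, depending only on $|\Gamma_{k-1}(x)|$ and $|N_{k-1}(x)|$, is $\mathcal S_{k-1}$-measurable.

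Given this, conditionally on any realization of $\mathcal S_{k-1}$ the variable $|E_k(x)|$ is a sum of independent $\mathrm{Bernoulli}(p)$ variables with conditional mean
\[
m_k:=\E\big[|E_k(x)|\,\big|\,\mathcal S_{k-1}\big]=p\,|\Gamma_{k-1}(x)|\,|N_{k-1}^c(x)|,
\]
which is the expectation appearing in the statement once we condition on the explored neighbourhood. The Chernoff bound then gives, for $0<\delta_k<1$,
\[
\Prob\big(\big||E_k(x)|-m_k\big|>\delta_k m_k \,\big|\,\mathcal S_{k-1}\big)\le 2\,e^{-\delta_k^2 m_k/3}.
\]
Next I would lower-bound the exponent on the event $\Omega_{k,x}$. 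There $|\Gamma_{k-1}(x)|\ge \tfrac12(pn)^{k-1}$ and $|N_{k-1}^c(x)|=n-|N_{k-1}(x)|\ge n-2(pn)^{k-1}$; since $k\le d-1$, Assumption~\ref{ass bollobas} gives $(pn)^{d}=n\log(n^2/c)$ and hence $(pn)^{k-1}=o(n)$, so that $m_k\ge \tfrac12(pn)^{k}(1-o(1))$. Substituting $\delta_k^2=L\log n/(pn)^k$ yields
\[
\tfrac13\,\delta_k^2 m_k \;\ge\; \tfrac{L}{6}(\log n)(1-o(1))\;\ge\; 6\log n
\]
for all large $n$ as soon as $L\ge 72$, so the conditional bound is at most $2e^{-6\log n}=2n^{-6}$. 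I would also record that $\delta_k\to 0$ (because $pn$ grows faster than any power of $\log n$, by the remark after Assumption~\ref{ass bollobas}), which legitimises applying the Chernoff bound with $0<\delta_k<1$.

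Finally I would remove the conditioning by the law of total probability: writing the target as $\E\big[\mathbf 1_{\Omega_{k,x}}\,\Prob(\{\cdots\}\mid\mathcal S_{k-1})\big]/\Prob(\Omega_{k,x})$ and using that the inner conditional probability is $\le 2n^{-6}$ uniformly over all realizations consistent with $\Omega_{k,x}$, the ratio is itself $\le 2n^{-6}$, which is the claim. The main obstacle, and the step deserving the most care, is the independence assertion of the first paragraph: one must set up the exploration so that $\Omega_{k,x}$ is measurable with respect to the already-revealed edges while the cross-edges of $E_k(x)$ remain untouched, so that conditioning on $\Omega_{k,x}$ does not bias their $\mathrm{Bernoulli}(p)$ status. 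Everything after that is a uniform-in-realization Chernoff estimate together with the elementary size bounds supplied by $\Omega_{k,x}$.
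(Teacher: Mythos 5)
Your proposal is correct and follows essentially the same route as the paper: condition on the explored neighbourhood so that $|E_k(x)|$ is a sum of $|\Gamma_{k-1}(x)|\,(n-|N_{k-1}(x)|)$ independent $\mathrm{Bernoulli}(p)$ indicators, apply the Chernoff bound with the given $\delta_k$, and use the size constraints defining $\Omega_{k,x}$ to force the exponent below $-6\log n$ once $L\ge 72$. The only difference is that you make explicit the deferred-decision/measurability argument that the paper leaves implicit in the phrase ``given $|\Gamma_{k-1}(x)|$ and $|N_{k-1}(x)|$, the random variable $|E_k(x)|$ has binomial distribution,'' which is a welcome but not substantively different refinement.
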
 
\begin{proof}[Proof of \cref{E_k(x)}]
    Observe that,  given $\vert \Gamma_{k-1}(x) \vert $ and $\vert N_{k-1}(x) \vert$, the random variable $\vert E_k(x) \vert$ has binomial distribution with parameters   
    $\vert \Gamma_{k-1}(x) \vert \left( n- \vert N_{k-1}(x) \vert \right)$ and $p$. Hence,
    \[
    \E \vert E_k(x) \vert = \vert \Gamma_{k-1}(x) \vert \left( n- \vert N_{k-1}(x) \vert \right) p.
    \]
    Applying Chernoff bound for $ \delta_k$ as defined in \eqref{eqn:deltak}, we obtain
    \begin{align}\label{eqn:cher1}
    \Prob \left(\l||E_k(x)|-\E[|E_k(x)|]\r|>\delta_k \E[|E_k(x)|]\right)\le 2e^{-\frac{\delta_k^2}{3}\E[|E_k(x)|]}.
    \end{align}
    Observe that,  given $ \Omega_{k,x}$ and $1\le k\le d-1$, we have
    \[
    n- \vert N_{k-1}(x) \vert = n \left( 1 - o(1) \right), \text {  as  } \vert N_{k-1}(x) \vert =o(n), \mbox{ as } n\to \infty.
    \]
    Thus by \eqref{eqn:gammak} and \eqref{eqn:deltak},  for large $n$, we get
    \[
    \frac{\delta_k^2}{3}\E[|E_k(x)|]\ge \frac{L\log n}{12}.
    \]
    Therefore, using the last equation in \eqref{eqn:cher1}, we get 
    \begin{align*}
    	 \Prob \left(\l||E_k(x)|-\E[|E_k(x)|]\r|>\delta_k \E[|E_k(x)|]\given \Omega_{ k,x}\right)\le 2 n^{-L/12}.
    \end{align*}
    Hence, we obtain the result.
\end{proof}
 
\begin{lemma} \label{lemma 4} Let $\delta_1$ be as defined in \eqref{eqn:deltak}. 
	Let  $ 1 \leq k \leq d-1 $ and 
	\begin{align}\label{eqn:epsilonk}
	\epsilon_k = 2\delta_1
	\end{align}
	Then, for a fixed vertex $x$ and $ 1 \leq k \leq d-1 $, we have
	\[
	\Prob \left(\l||\Gamma_k(x)|-\E[|\Gamma_k(x)|]\r|>\delta_1 \E[|\Gamma_k(x)|]\given \Omega_{ k,x}\right)\le 3n^{-10}.
	\]
\end{lemma}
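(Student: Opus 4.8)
The plan is to reveal $G$ by a breadth-first exploration from $x$ and to analyse one shell at a time, showing that, conditioned on everything seen up to distance $k-1$, the size $|\Gamma_k(x)|$ is an honest binomial random variable to which the Chernoff bound applies. Concretely, I would condition on the $\sigma$-algebra $\mathcal F_{k-1}$ generated by the exploration up to level $k-1$. This reveals the sets $\Gamma_{k-1}(x)$ and $N_{k-1}(x)$ together with all edges incident to $N_{k-2}(x)$, but it does \emph{not} reveal any edge joining the frontier $\Gamma_{k-1}(x)$ to the unexplored region $N_{k-1}^c(x)$, since such edges lie entirely inside $N_{k-2}^c(x)$ and were never examined. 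Hence, for each $v\in N_{k-1}^c(x)$, the indicator $Z_v=\mathbf{1}\{v\text{ is adjacent to }\Gamma_{k-1}(x)\}$ depends only on fresh, unexamined edges between $v$ and $\Gamma_{k-1}(x)$; as these edge-sets are disjoint for distinct $v$, the $Z_v$ are conditionally independent $\mathrm{Bernoulli}(q)$ with $q=1-(1-p)^{|\Gamma_{k-1}(x)|}$. Therefore
\[
|\Gamma_k(x)|=\sum_{v\in N_{k-1}^c(x)}Z_v\;\sim\;\mathrm{Bin}\bigl(n-|N_{k-1}(x)|,\,q\bigr)
\]
conditionally on $\mathcal F_{k-1}$, with conditional mean $\mu:=\E[|\Gamma_k(x)|\mid \mathcal F_{k-1}]=(n-|N_{k-1}(x)|)\,q$, consistent with how $\E|E_k(x)|$ is treated in \Cref{E_k(x)}.

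Next I would lower-bound $\mu$ on the event $\Omega_{k,x}$. By \eqref{eqn:gammak} one has $|N_{k-1}(x)|\le 2(pn)^{k-1}=o(n)$, so $n-|N_{k-1}(x)|=n(1-o(1))$, and $|\Gamma_{k-1}(x)|\ge\tfrac12(pn)^{k-1}$. Since $p\,|\Gamma_{k-1}(x)|\asymp p^k n^{k-1}=(\log(n^2/c))^{k/d}\,n^{(k-d)/d}\to 0$ for every $k\le d-1$ under Assumption~\ref{ass bollobas}, the estimate $1-(1-p)^m=mp(1+o(1))$ applies with $m=|\Gamma_{k-1}(x)|$, giving $q=p|\Gamma_{k-1}(x)|(1+o(1))$ and hence $\mu\ge\tfrac12(pn)^k(1-o(1))$ uniformly over configurations in $\Omega_{k,x}$.

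Finally I would apply the Chernoff bound recalled above to the conditional binomial. With $\delta_1^2=\dfrac{L\log n}{np}$ from \eqref{eqn:deltak}, the lower bound on $\mu$ yields
\[
\frac{\delta_1^2}{3}\,\mu\ge\frac{L\log n}{3np}\cdot\frac{(pn)^k}{2}(1-o(1))=\frac{L\log n}{6}(pn)^{k-1}(1-o(1))\ge\frac{L\log n}{6}(1-o(1)),
\]
using $(pn)^{k-1}\ge 1$. Thus for each configuration in $\Omega_{k,x}$,
\[
\Prob\bigl(\,\bigl|\,|\Gamma_k(x)|-\mu\,\bigr|>\delta_1\mu\,\bigm|\,\mathcal F_{k-1}\bigr)\le 2e^{-\delta_1^2\mu/3}\le 2n^{-L/6+o(1)}\le 3n^{-10}
\]
for large $n$ once $L\ge 72$, since then $L/6\ge 12$. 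As $\Omega_{k,x}$ is $\mathcal F_{k-1}$-measurable, averaging this uniform bound via $\Prob(\,\cdot\mid\Omega_{k,x})=\E[\Prob(\,\cdot\mid\mathcal F_{k-1})\mid\Omega_{k,x}]$ gives the claimed estimate $3n^{-10}$.

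The main obstacle is the careful bookkeeping in the first step: one must verify rigorously that the edges governing $|\Gamma_k(x)|$ are genuinely fresh relative to $\mathcal F_{k-1}$, so that the conditional law is \emph{exactly} binomial rather than merely approximately so. The only other delicate point is controlling $1-(1-p)^{|\Gamma_{k-1}(x)|}$ by $p\,|\Gamma_{k-1}(x)|$, which is precisely where the restriction $k\le d-1$ (ensuring $p^k n^{k-1}\to 0$ under Assumption~\ref{ass bollobas}) enters; this is the same mechanism underlying the companion estimate for $|E_k(x)|$ in \Cref{E_k(x)}, and one could alternatively transfer concentration from $|E_k(x)|$ to $|\Gamma_k(x)|$ after bounding the negligible number of vertices hit by two or more edges.
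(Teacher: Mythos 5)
Your proof is correct, but it takes a genuinely different route from the paper. The paper works edge-by-edge: it splits $E_k(x)$ into the bundles $E_k(x,v)$, $v\in\Gamma_{k-1}(x)$, applies a Chernoff bound to each $|E_k(x,v)|\sim\mathrm{Bin}(n-|N_{k-1}(x)|,p)$, and then --- this is the bulk of its proof --- runs a combinatorial cardinality comparison (the events $\mathcal A_{\bf m}$, $\mathcal A_{\bf m}^0$, $B_{\bf m}$) to show that with probability $1-O(n^{-10})$ at most $10$ of these edges share a far endpoint, so that the edge count transfers to the vertex count $|\Gamma_k(x)|=|\partial(\Gamma_{k-1}(x))|$ up to an additive $10$; this is what forces the widened slack $\epsilon_k=2\delta_1$. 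You instead count vertices directly: conditioned on the exploration up to level $k-1$, the indicators $Z_v$, $v\in N_{k-1}^c(x)$, use pairwise disjoint sets of unexamined potential edges, so $|\Gamma_k(x)|$ is \emph{exactly} $\mathrm{Bin}\bigl(n-|N_{k-1}(x)|,\,1-(1-p)^{|\Gamma_{k-1}(x)|}\bigr)$ and one Chernoff application suffices, with no multiplicity bookkeeping at all. The price you pay is that your conditional mean $\mu$ differs from the paper's reference value $np\,|\Gamma_{k-1}(x)|$ by a factor $1+O(p|\Gamma_{k-1}(x)|)$; you correctly observe that $p|\Gamma_{k-1}(x)|\asymp p^k n^{k-1}\to 0$ for $k\le d-1$ under Assumption~\ref{ass bollobas} (indeed it is $o(\delta_1)$), so the discrepancy is absorbed into $\epsilon_k=2\delta_1$ and the downstream use in \Cref{lemma 6} is unaffected. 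Your argument is shorter and cleaner for graphs; the one thing the paper's edge-counting route buys is that it transfers essentially verbatim to the $t$-uniform hypergraph setting (\Cref{lemma 14}--\Cref{lemma 18}), where your vertex-indicator trick breaks down because a single hyperedge can put two new vertices $v,v'$ into $\Gamma_k(x)$ simultaneously, destroying the conditional independence of $Z_v$ and $Z_{v'}$.
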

\begin{proof} [Proof of \cref{lemma 4}]
	Fix $1\le k\le d-1$. For $ v \in  \Gamma_{k-1} (x)$, suppose $ E_k (x,v) $ denotes the set of edges of $E_k (x)$ containing  $v$, that is,
	\[
	E_k (x,v) =  \{ e \in E_k (x) : v\in e \} .
	\]
   Observe that, given $\vert N_{k-1}(x) \vert$, the random variable $ \vert  E_k (x,v) \vert$ has Binomial distribution with parameters $ n- \vert N_{k-1}(x) \vert $ and $p$. Hence, by  Chernoff bound for $\delta_1$, we obtain
   \[
   \Prob \left(\l||E_k(x,v)|-\E[|E_k(x,v)|]\r|>\delta_1\E[|E_k(x,v)|]\right)\le 2e^{-\frac{\delta_1^2}{3}\E[|E_k(x,v)|]}.
   \]
   where $\E[|E_k(x,v)|]= \left(n-|N_{k-1}(x)| \right) p$. By \eqref{eqn:gammak},  \eqref{eqn:deltak}, and by following similar steps as in \cref{E_k(x)}, for large $n$, we get
   \[
    \Prob \left(\big||E_k(x,v)|-\E[|E_k(x,v)|]\big|>\delta_1\E[|E_k(x,v)|] \given \Omega_{ k,x}\right)\le 2n^{-\frac{L}{6}}.
   \]
    Let $A_v=\left\{\l||E_k(x,v)|-\E[|E_k(x,v)|]\r|\le \delta_1\E[|E_k(x,v)|]\right\}$ and
     \(
    \mathcal{A}= \cap_{v\in \Gamma_{k-1}(x)}A_v.
    \) Then, given $\Omega_{ k,x}$, by the union bound we have
     \begin{align} \label{cher:mathcalA}
     \Prob(\mathcal A)\ge 1- \frac{ 2 \vert \Gamma_{k-1} (x)  \vert }{ n^{L/6}}.
     \end{align}
Since $\Gamma_k(x)=\cup_{v\in \Gamma_{k-1}(x)}E_k(x,v)$ and $\E[|E_k(x,v)|]=\left(n-|N_{k-1}(x)| \right) p$, therefore 
	\begin{align*}
		\Prob(|\Gamma_{k}(x)|>(1+\delta_1)|\Gamma_{k-1}(x)|(n-|N_{k-1}(x)|)p)\le  \frac{ 2 \vert \Gamma_{k-1} (x)  \vert }{ n^{L/6}}.
	\end{align*}
Since $L\ge 72$ and $|\Gamma_{k-1}(x)|=o(n)$, for $\epsilon_k$ as defined in \eqref{eqn:epsilonk} and large $n$, we have 
	\begin{align}\label{eqn:upperepsilon}
	\Prob\l(|\Gamma_{k}(x)|>(1+\epsilon_k)np|\Gamma_{k-1}(x)|\r)\le  \frac{1}{ n^{10}}.
\end{align}
Next we give lower bound of $|\Gamma_{k-1}(x)|$.
Let $a=(1-\delta_1)(n-|N_{k-1}(x)|)p$ and  $b=(1+\delta_1)(n-|N_{k-1}(x)|)p$. Suppose $\ell=|\Gamma_{k-1}(x)|$ and ${\bf m}=(m_1,\ldots, m_{\ell})$ where $m_1,\ldots, m_\ell\in \N$. We define
\[
\mathcal{A}_{\bf m} = \{ G \in \mathscr{G} [n]: \vert E_k (x,v_i) \vert =m_i  \text{ where } v_i \in \Gamma_{k-1}(x) \}
.\] 
By the definition, the events $\{\mathcal{A}_{\bf m}\}$ are disjoint. Thus $\mathcal{A}$ is the disjoint union of   $\mathcal{A}_m $, where ${\bf m}\in [a,b]^\ell$. 
 Which implies that 
 \begin{align}\label{eqn:sumam}
 \Prob (\mathcal{A})=\sum_{\bf m\in [a,b]^\ell}\Prob(\mathcal{A}_{\bf m}).
 \end{align}
 Let $\partial(\Gamma_{k-1}(x))=\cup_{v\in \Gamma_{k-1}(x)} E_k(x,v)\backslash \Gamma_{k-1}(x)$ denote the boundary of $\Gamma_{k-1}(x)$. We define $\mathcal{A}_{\bf m}^0 $, and $ B_{\bf m}$ as follows, for ${\m}=(m_1,\ldots, m_\ell)$,
   \begin{align*}
   \mathcal{A}_{\bf m}^0 &= \{ G \in \mathcal{A}_{\m}: \vert \partial(\Gamma_{k-1}(x)) \vert \geq m_1+\cdots+m_\ell - 10 \}
   \\ B_{\m} &= \{ G \in \mathcal{A}_{\m}: \vert \partial(\Gamma_{k-1}(x)) \vert = m_1+\cdots+m_\ell \}.
   \end{align*}
   Observe that $B_{\m}$ occurs when  the sets $E_k(x,v_1),\ldots,E_k(x,v_\ell)$ are disjoint. We say $E_k(x,v)$ and $E_k(x,v')$ are disjoint if $e\cap e'=\emptyset$ for all $e\in E_k(x,v)$ and $e'\in E_k(x,v')$.
It is clear that $\mathcal A_{\m}^0\subset \mathcal A_{\m}$ and  $\Prob(\mathcal A_{\m})=\Prob(A_{\m}^0)+\Prob(\mathcal A_{\m}\cap (\mathcal A_{\m}^0)^c)$. Which implies
\begin{align*}
    \Prob \left( \mathcal{A}_{\m}^0 \right)
    &= \Prob \left( \mathcal{A}_{\m} \right) \left [1 - \frac {\vert \mathcal{A}_{\m} \cap (\mathcal A_{\m}^0)^c \vert} {\vert \mathcal{A}_{\m} \vert} \right] 
    \geq \Prob \left( \mathcal{A}_{\m} \right) \left [1 - \frac {\vert \mathcal{A}_{\m} \cap (A_{\m}^0)^c \vert} {\vert B_{\m} \vert} \right].
\end{align*}
The last inequality follows from the fact that $B_{\m}\subset \mathcal A_{\m}$. Observe that, we have
\begin{align*}
\vert B_{\m} \vert &= \left( n- \vert N_{k-1}(x) \vert \right) \cdots  (n- \vert N_{k-1}(x) \vert -(m_1+\cdots+ m_\ell-1)),
\end{align*}
as all the end points (which are not in $\Gamma_{k-1}(x)$) of edges are distinct. On the other hand, we have 
\begin{align*}
\vert \mathcal{A}_{\m} \cap (A_{\m}^0)^c \vert \le
 \left(n- \vert N_{k-1}(x) \vert \right) \cdots (n- \vert N_{k-1}(x) \vert -( m_1+\cdots+m_\ell - 12)),
\end{align*}
as there are at least  $11$ edges repeated, that is, the end points of at most $(n- \vert N_{k-1}(x) \vert -( m_1+\cdots+m_\ell - 12)$ edges are distinct. Therefore by \eqref{eqn:gammak} we get
\begin{align}\label{eqn:am0lower}
 \Prob \left( \mathcal{A}_{\m}^0 \right)\ge  \Prob ( \mathcal{A}_m ) \left(1 - \frac {2 } { n^{11} }  \right), \hspace{.3cm}\mbox{ for large $n$.}
\end{align}
 Suppose that $\mathcal A^0=\{G\in \mathcal G[n] \suchthat |\partial(\Gamma_{k-1}(x))|\ge \ell a-10\}$. Then 
\[ \mathcal{A}^0\supset \bigcup_{\m\in [a,b]^\ell} \mathcal{A}_{\m}^0.
\]
Therefore  by \eqref{eqn:sumam} and \eqref{eqn:am0lower} we have 
\begin{align*}
 \Prob \left( \mathcal{A}^0 \right) \ge  \sum\limits_{ \m\in[a,b]^\ell }\Prob(\mathcal{A}_{\m}) \left(1 - \frac {1 } { n^{10} }  \right)=\Prob(\mathcal A)\left(1 - \frac {2 } { n^{11} }  \right).
\end{align*}
Since  $|\Gamma_{k-1}(x)|=o(n)$ (from \eqref{eqn:gammak} ) and $L\ge 72$, hence  using \eqref{eqn:cher1} we get
\[
\Prob(\mathcal A^0)\ge 1 - \frac {2 } { n^{10} } .
\]
Observe that $\Gamma_k(x)=\partial(\Gamma_{k-1}(x))$, putting values of $\ell$
and 
$a$,  we get 
\[
\Prob(|\Gamma_k(x)|<(1-\delta_1)|\Gamma_{k-1}(x)|(n-|N_{k-1}(x)|)p-10) \leq \frac {2 } { n^{10}}.
\]
Which implies that, for $\epsilon_k$ as defined in \eqref{eqn:epsilonk} and  for large $n$,
\begin{align}\label{eqn:loewer}
	\Prob(|\Gamma_k(x)|<(1-\epsilon_k)|\Gamma_{k-1}(x)|np)\le \frac {2 } { n^{10}}.
\end{align}
Therefore \eqref{eqn:upperepsilon} and \eqref{eqn:loewer} give the result.
 \end{proof}

The following lemma and its proof are similar to Lemma~3 in \cite {bollobas1981diameter}. For the sake of completeness, we provide the proof here.
\begin{lemma} \label{lemma 6}
    Let  $\epsilon_k$ be as defined in \eqref{eqn:epsilonk}. For $1 \leq k \leq d-1$, define
    \begin{align*}
    \eta_k &= \exp \left( \sum\limits_{l=1}^{k} \epsilon_l \right) -1, \mbox{ and }
    \\\Omega_{k,x}^* &= \left \{ G \in \mathscr{G} [n]: \left\vert \vert \Gamma_{l}(x)\vert - n^l p^l  \right\vert \leq \eta_l  n^l p^l , \text { for all } 1 \leq l \leq k \right \},
    \end{align*}
   where $x$ is a fixed vertex. Then, for large $n$, we have
    \[
    \Prob(\Omega_{k,x}^* )\ge 1 - \frac{3 k}{n^{10}}.
    \]
\end{lemma}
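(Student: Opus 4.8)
The plan is to prove \Cref{lemma 6} by induction on $k$, using the recursion derived in \Cref{lemma 4} to pass from layer $k-1$ to layer $k$ and choosing $\eta_k$ so that the per-layer multiplicative errors $(1\pm\epsilon_l)$ compound cleanly. For the base case $k=1$, I note that $|\Gamma_1(x)|$ is the degree of $x$, hence $\mathrm{Bin}(n-1,p)$ with mean $(n-1)p$; applying the Chernoff bound with deviation $\epsilon_1=2\delta_1$ gives failure probability at most $2e^{-\epsilon_1^2(n-1)p/3}$, which is $\le 3n^{-10}$ for $L\ge 72$ and large $n$ since $\epsilon_1^2(n-1)p/3\approx (4L/3)\log n$. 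Because $\eta_1=e^{\epsilon_1}-1\ge\epsilon_1$ and the slack $np(\eta_1-\epsilon_1)\approx 2L\log n\to\infty$ dominates the recentering error $p$ between $(n-1)p$ and $np$, the event $\{\,||\Gamma_1(x)|-np|\le\eta_1 np\,\}$ holds with probability at least $1-3n^{-10}$.

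For the inductive step, assume $\Prob(\Omega_{k-1,x}^*)\ge 1-3(k-1)n^{-10}$. The first key point is the containment $\Omega_{k-1,x}^*\subseteq\Omega_{k,x}$ for large $n$: on $\Omega_{k-1,x}^*$ we have $|\Gamma_{k-1}(x)|\in[(1-\eta_{k-1}),(1+\eta_{k-1})]\,n^{k-1}p^{k-1}$, and since $d$ is fixed, $\eta_{k-1}=\exp(2(k-1)\delta_1)-1\to 0$ (as $\delta_1\to 0$), so for large $n$ this interval lies inside $[\tfrac12,\tfrac32](pn)^{k-1}$, verifying the $\Gamma$-condition of \eqref{eqn:gammak}; the $N$-condition follows because $|N_{k-1}(x)|=\sum_{i=0}^{k-1}|\Gamma_i(x)|$ is dominated by its top layer (the partial sums grow with ratio $\approx np\to\infty$), giving $|N_{k-1}(x)|\le 2(pn)^{k-1}$. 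Granting this containment, I apply \Cref{lemma 4}: conditioned on $\Omega_{k,x}$, the recursion
\[
(1-\epsilon_k)\,np\,|\Gamma_{k-1}(x)|\le|\Gamma_k(x)|\le(1+\epsilon_k)\,np\,|\Gamma_{k-1}(x)|
\]
fails with probability at most $3n^{-10}$, and since $\Omega_{k-1,x}^*\subseteq\Omega_{k,x}$ it fails on $\Omega_{k-1,x}^*$ with probability at most $3n^{-10}$ as well (intersecting with a smaller event only decreases the probability).

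Combining the inductive bound on $|\Gamma_{k-1}(x)|$ with this recursion gives $|\Gamma_k(x)|\in[(1-\epsilon_k)(1-\eta_{k-1}),(1+\epsilon_k)(1+\eta_{k-1})]\,n^kp^k$ on the good events. Here the exponential form of $\eta_k$ is exactly what closes the induction: from the recursive identity $1+\eta_k=e^{\epsilon_k}(1+\eta_{k-1})$ one reads off $(1+\epsilon_k)(1+\eta_{k-1})\le 1+\eta_k$ using $1+\epsilon_k\le e^{\epsilon_k}$, while $(1-\epsilon_k)(1-\eta_{k-1})\ge 1-\eta_k$ reduces to the elementary inequality $(e^{\epsilon_k}-1-\epsilon_k)+\eta_{k-1}(e^{\epsilon_k}-1+\epsilon_k)\ge 0$, valid for $\epsilon_k,\eta_{k-1}\ge 0$. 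Hence $|\Gamma_k(x)|\in[(1-\eta_k),(1+\eta_k)]\,n^kp^k$, which together with $\Omega_{k-1,x}^*$ is precisely $\Omega_{k,x}^*$. A union bound over the two failure modes yields $\Prob((\Omega_{k,x}^*)^c)\le 3(k-1)n^{-10}+3n^{-10}=3kn^{-10}$, completing the induction.

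The step I expect to be the main obstacle is reconciling the conditioning in \Cref{lemma 4} with the refined event produced by the induction: \Cref{lemma 4} controls $|\Gamma_k(x)|$ only conditionally on the crude a priori set $\Omega_{k,x}$ from \eqref{def:Omega k,x}, whereas the induction naturally carries the sharper event $\Omega_{k-1,x}^*$. The containment $\Omega_{k-1,x}^*\subseteq\Omega_{k,x}$ is what bridges this gap, and verifying it carefully (in particular controlling $|N_{k-1}(x)|$ and checking $\eta_{k-1}<\tfrac12$ for large $n$) is the delicate part; the remaining work is bookkeeping of the compounding multiplicative errors, for which the exponential definition of $\eta_k$ is tailor-made.
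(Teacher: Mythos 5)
Your proposal is correct and follows essentially the same route as the paper: a layer-by-layer induction driven by \Cref{lemma 4}, with the containment $\Omega_{k-1,x}^*\subseteq\Omega_{k,x}$ bridging the conditioning and the exponential choice of $\eta_k$ absorbing the compounded per-layer errors. The only cosmetic difference is that you propagate the error multiplicatively via $1+\eta_k=e^{\epsilon_k}(1+\eta_{k-1})$, whereas the paper uses the triangle inequality together with $\eta_k-\eta_{k-1}\ge\epsilon_k$; these are equivalent.
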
 
\begin{proof} [Proof of \cref{lemma 6}]
    Assume that $x$ is a fixed vertex. 
   Recall $\Omega_{k,x}$ as in \eqref{def:Omega k,x}. Then $ \Omega_{k,x}^* \subseteq \Omega_{k-1, x}^* \subseteq \Omega_{k,x} $. 
     Note that we have
\[\Omega_{k,x}^* =\Omega_{k-1 , x}^* \backslash \left \{  \left\vert \vert \Gamma_{k}(x)\vert - (np)^k  \right\vert \geq \eta_k (np)^k, \Omega_{k-1,x}^* \right \} .\]
Which implies that
\begin{equation} \label{eq 8}
     1- \Prob ( \Omega_{k,x}^* ) = 1 - \Prob ( \Omega_{k-1, x}^*) + \Prob (  \left\vert \vert \Gamma_{k}(x)\vert - (n p)^k  \right\vert \geq \eta_k (n p)^k, \Omega_{k-1, x}^* ).
\end{equation} 
Let $F_k=\{ \left\vert \vert \Gamma_{k}(x)\vert - (n p)^k  \right\vert \geq \eta_k (n p)^k\}$. 
By the triangle inequality we have
\begin{align*}
	\left\vert \vert \Gamma_{k}(x)\vert - (n p)^k \right\vert \le \left\vert \vert \Gamma_{k}(x)\vert - \vert \Gamma_{k-1}(x) \vert n p \right\vert + 
	\left\vert \vert \Gamma_{k-1}(x) \vert n p - (n p)^k \right\vert.
\end{align*}
Observe that, for $G\in \Omega_{k-1,x}^* $, we have 
\begin{equation*} 
 \left\vert  \vert \Gamma_{k-1}(x) \vert n p- (n p)^k \right\vert 
\leq \left\vert (1+\eta_{k-1}) (n p)^{k} -(n p)^k   \right\vert 
\leq \eta_{k-1} (n p)^k .
\end{equation*}
Therefore using the last two equations, we conclude if $G\in A_k\cap \Omega_{k-1,x}^* $ then 
\[
\left\vert \vert \Gamma_{k}(x)\vert - \vert \Gamma_{k-1}(x) \vert n p  \right\vert \geq ( \eta_k - \eta_{k-1} ) (n p)^k\ge \epsilon_k (np)^k.
\]
Thus, as $\Omega_{k-1,x}^*\subseteq \Omega_{k-1,x}$, we obtain
\begin{align}\label{eqn:uselemma4}
\Prob ( F_k\cap \Omega_{k-1, x}^* )&\le  \Prob \left( \left\vert \vert \Gamma_{k}(x)\vert - \vert \Gamma_{k-1}(x) \vert n p  \right\vert \geq \epsilon_k (n p)^k ,\Omega_{k-1, x}^* \right)\nonumber
\\&\le  \Prob \left( \left\vert \vert \Gamma_{k}(x)\vert - \vert \Gamma_{k-1}(x) \vert n p  \right\vert \geq \epsilon_k (n p)^k ,\Omega_{k-1, x} \right)\nonumber
\\&\le  \Prob \left( \left\vert \vert \Gamma_{k}(x)\vert - \vert \Gamma_{k-1}(x) \vert n p  \right\vert \geq \epsilon_k (n p)^k \given \Omega_{k-1, x} \right)\nonumber
\\&\le \frac{3}{n^{10}}.
\end{align}
The last inequality follows by \Cref{lemma 4}. Therefore by \eqref{eq 8} and \eqref{eqn:uselemma4}  we get
\begin{align}\label{eqn:recursive}
\Prob \left( ( \Omega_{k,x}^* )^c \right)  \leq \Prob \left( ( \Omega_{k-1, x}^* )^c \right) + \frac{ 3}{n^{10}}.
\end{align}
By the Chernoff's bound it can be easily checked that, for large $n$,
\[
\Prob \left( ( \Omega_{1,x}^* )^c \right) \le \frac{3}{n^{10}}.
\]
Hence the result follows by the last equation and  the recursive relation in \eqref{eqn:recursive}.  
\end{proof}

\begin{proof}[Proof of \Cref{X alpha=1}]
  Suppose $\alpha = (x,y) \in I$ and $m \in \mathbb{N}$. Set 
  \begin{align} \label{definiton a,b}
  a'= (1-\eta_{d-1}) n^{d-1} p^{d-1} \text{ and } b'= (1+\eta_{d-1}) n^{d-1} p^{d-1}.
 \end {align} 
   Observe that, by the conditional probability, we have
  	\begin{align*} 
  	\begin{split}
  		\Prob \left( X_ \alpha =1 \right) 
  		 =& \sum\limits_ {m\in [a',b']} \Prob \left( X_ \alpha =1 \mid \left\vert \Gamma_{d-1}(x) \right\vert =m \right) \Prob \left( \left\vert \Gamma_{d-1}(x) \right\vert =m \right) \\
  		&+ \sum\limits_ {m\in [a',b']^c } \Prob \left( X_ \alpha =1 \mid \left\vert \Gamma_{d-1}(x) \right\vert =m \right) \Prob \left( \left\vert \Gamma_{d-1}(x) \right\vert =m \right) .
  	\end{split}
  	\end{align*}
Which further helps to obtain upper and lower bound as follows: 
\begin{align}
	\Prob \left( X_ \alpha =1 \right) 
	\leq& \sum\limits_ {m\in [a',b']} \Prob \left( X_ \alpha =1 \mid \left\vert \Gamma_{d-1}(x) \right\vert =m \right) \Prob \left( \left\vert \Gamma_{d-1}(x) \right\vert =m \right) \nonumber
	\\&+\sum\limits_ {m\in [a',b']^c } \Prob \left( \left\vert \Gamma_{d-1}(x) \right\vert =m \right) ,\label{upperbound of Xalpha}
\\	\Prob \left( X_ \alpha =1 \right) 
	 \geq& \sum\limits_ {m\in [a',b']} \Prob \left( X_ \alpha =1 \mid \left\vert \Gamma_{d-1}(x) \right\vert =m \right) \Prob \left( \left\vert \Gamma_{d-1}(x) \right\vert =m \right).\label{lowerbound of Xalpha}
\end{align}	
Note that $ \Prob \left( X_ \alpha =1 \mid \left\vert \Gamma_{d-1}(x)       \right\vert =m \right) = \Prob \left( d_G (x,y) > d \mid \left\vert \Gamma_{d-1}(x)       \right\vert =m \right)$ and $d_G (x,y) > d$ if and only if $y$ is not connected with any vertex of $\Gamma_{d-1} (x)$. Therefore,
\begin{align} \label{X alpha=1 given omega d-1}
 \Prob \left( X_ \alpha =1 \mid \left\vert \Gamma_{d-1}(x) \right\vert =m \right) 
		= (1-p) ^ { m}.
	\end{align}	
From the definition of $\Omega_{d-1,x}^*$ in \Cref{lemma 6}, it is clear that
\begin{align} \label{Omega subset of Gamma}
	\Omega_{d-1,x}^* \subseteq \{ a' \leq \left\vert \Gamma_{d-1}(x) \right\vert \leq b'\}.
\end{align} 	
Using \eqref{X alpha=1 given omega d-1}, \eqref{Omega subset of Gamma} in \eqref{upperbound of Xalpha} and \eqref{lowerbound of Xalpha}, we obtain
\begin{align*} 
		(1-p) ^ {b' }  \Prob \left( \Omega_{d-1,x}^* \right) 
		\leq \Prob \left( X_ \alpha =1 \right) 
		\leq  (1-p) ^ { a'}  + \Prob \left( (\Omega_{d-1,x}^*)^c \right).
\end{align*}
Next, using  $e^ {-p(1+p)} \leq 1-p \leq e^{-p}$ in the last inequality, we get
\begin{align} \label{eqprob bounds}
	 e^{-b'p(1+p)} \Prob \left( \Omega_{d-1,x}^* \right) \leq \Prob \left( X_ \alpha =1 \right) \leq e^{-a'p} + \Prob \left( (\Omega_{d-1,x}^*)^c \right) .
\end{align}	
Also,  from \cref{lemma 6}, we have
\begin{align}\label{eqn:1probomega}
	\Prob \left( \Omega_{d-1,x}^* \right) \geq 1 - \frac{3 (d-1) }{n^{10} } . 
\end{align}
Substituting the values of $a'$, $b'$ in \eqref{eqprob bounds}, and then using the assumption $n^{d-1}p^d=\log(n^2/c)$ and \eqref{eqn:1probomega}, we obtain
\begin{align*}
	 \left( \frac{c}{n^2} \right)^ {(1+p)(1+ \eta_{d-1} )} \left( 1- \frac{ 3 (d-1) }{n^{10} } \right)
	 \leq \Prob \left( X_ \alpha =1 \right)
	\leq \left( \frac{c}{n^2} \right)^{1- \eta_{d-1}} +  \frac{ 3(d-1) }{n^{10} }.
	\end{align*}
This gives the result, as $p\log n\to 0$ and $\eta_{d-1}\log n\to 0$ when $n\to \infty$.	 
\end{proof}

\subsection{Proof of \cref{X alpha and X beta=1} }
This subsection is dedicated to prove \cref{X alpha and X beta=1}. The following auxiliary lemmas will be used in the proof.
\begin{lemma} \label{lemma 8}
	Let $\eta_k$, and $\Omega_{ k,x}^*$ be as defined in \Cref{lemma 6}. For two vertices $x$ and $z$ we define $\Omega_{k,x,z}^*=\Omega_{k,x}^*\cap \Omega_{k,z}^*$. Then, for $1\le k\le d-1$,
	\[
	\Prob(\Omega_{k,x,z}^*)\ge 1-\frac{6 k}{n^{10}}.
	\]
\end{lemma}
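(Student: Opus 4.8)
The plan is to deduce this directly from \Cref{lemma 6} by a union bound, since the only input needed is that the analogous one-vertex estimate holds separately for each of the two fixed vertices. \Cref{lemma 6} was established for an arbitrary fixed vertex, so I would apply it once with the fixed vertex taken to be $x$ and once with it taken to be $z$. For $1\le k\le d-1$ and large $n$ this yields
\[
\Prob\left((\Omega_{k,x}^*)^c\right)\le \frac{3k}{n^{10}} \qquad\text{and}\qquad \Prob\left((\Omega_{k,z}^*)^c\right)\le \frac{3k}{n^{10}}.
\]

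Next I would pass to complements of the intersection. By De Morgan's law,
\[
(\Omega_{k,x,z}^*)^c=\left(\Omega_{k,x}^*\cap \Omega_{k,z}^*\right)^c=(\Omega_{k,x}^*)^c\cup (\Omega_{k,z}^*)^c,
\]
so the subadditivity of $\Prob$ gives
\[
\Prob\left((\Omega_{k,x,z}^*)^c\right)\le \Prob\left((\Omega_{k,x}^*)^c\right)+\Prob\left((\Omega_{k,z}^*)^c\right)\le \frac{6k}{n^{10}}.
\]
Taking complements once more yields the claimed bound $\Prob(\Omega_{k,x,z}^*)\ge 1-6k/n^{10}$.

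There is essentially no obstacle here, and in particular no calculation to grind through: the argument requires no information about the joint behaviour of the neighbourhoods of $x$ and $z$, and no independence between the two events, because the union bound tolerates arbitrary (even positive) correlation. The only point worth verifying is that \Cref{lemma 6} is genuinely stated for an \emph{arbitrary} fixed vertex, so that its conclusion may be invoked verbatim for both $x$ and $z$; this is immediate from its statement, and since $k\le d-1$ is bounded the estimate holds for all large $n$ simultaneously.
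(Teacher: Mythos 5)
Your argument is exactly the paper's proof: apply \Cref{lemma 6} to each of $x$ and $z$ separately and combine via De Morgan and the union bound to get $\Prob\left((\Omega_{k,x,z}^*)^c\right)\le \frac{6k}{n^{10}}$. The proposal is correct and takes essentially the same (one-line) route as the paper.
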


\begin{proof} [Proof of \cref{lemma 8}]
	Using the union bound with \Cref{lemma 6}, we have
	\[ \Prob \left( (\Omega _{k,x,z}^*)^c \right) \leq \Prob \left( (\Omega _{k,x}^*)^c \right) +
	\Prob \left( (\Omega _{ k,z}^*)^c \right) \leq \frac{6 k}{n^{10} } .
	\]
Hence the result.
\end{proof}

\begin{lemma} \label{lemma:intersection}
	Let $ \Omega _{d-1,x,z}^*$ be as defined in \Cref{lemma 8}. Then, for large $n$,
	\[
	 \Prob \left(\left \vert \Gamma_{d-1} (x) \cap \Gamma_{d-1} (z) \right \vert \leq 10 n^{2d-3} p^{2d-2} \given \Omega _{d-1,x,z}^*\right)\ge 1 - {n^{-10}}.
	\]
\end{lemma}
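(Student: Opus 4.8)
The plan is to read the claimed bound as a ``random subset'' heuristic: on $\Omega^*_{d-1,x,z}$ both frontiers satisfy $|\Gamma_{d-1}(x)|,|\Gamma_{d-1}(z)|=(1+o(1))(np)^{d-1}$, so writing $M_k:=n^{2k-1}p^{2k}$, the quantity $M_{d-1}=n^{2d-3}p^{2d-2}$ is precisely $|\Gamma_{d-1}(x)|\,|\Gamma_{d-1}(z)|/n$, the expected overlap of two ``random'' sets of that size in $[n]$. I would establish the bound by a conditional first-moment-plus-concentration argument, and in fact prove the family of statements ``$|\Gamma_k(x)\cap\Gamma_k(z)|\le 10\,M_k$ with probability $\ge 1-n^{-10}$ given $\Omega^*_{k,x,z}$'' by induction on $k=1,\dots,d-1$, the lemma being the case $k=d-1$.

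For the inductive step I would expose the graph level by level. Fix the neighbourhood structure up to level $k-1$ from both $x$ and $z$, letting $\mathcal G_{k-1}$ be the $\sigma$-algebra recording $N_{k-1}(x),\Gamma_{k-1}(x),N_{k-1}(z),\Gamma_{k-1}(z)$ and the explored edges; note $\Omega^*_{k-1,x,z}$ and the inductive event $D_{k-1}=\{|\Gamma_{k-1}(x)\cap\Gamma_{k-1}(z)|\le 10M_{k-1}\}$ are both $\mathcal G_{k-1}$-measurable. For a vertex $w\notin N_{k-1}(x)\cup N_{k-1}(z)$ one has $w\in\Gamma_k(x)$ iff $w$ is joined to $\Gamma_{k-1}(x)$ and $w\in\Gamma_k(z)$ iff $w$ is joined to $\Gamma_{k-1}(z)$, and these depend only on the so-far unexposed edges incident to $w$. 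Hence the indicators $Y_w=\mathbf{1}\{w\in\Gamma_k(x)\cap\Gamma_k(z)\}$ are \emph{independent across $w$} given $\mathcal G_{k-1}$, so $|\Gamma_k(x)\cap\Gamma_k(z)|=\sum_w Y_w$ is a sum of independent Bernoullis and the Chernoff bound applies.

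To bound the conditional mean, write $A=\Gamma_{k-1}(x)$, $B=\Gamma_{k-1}(z)$, $C=A\cap B$; decomposing $\{w\sim A\}$ and $\{w\sim B\}$ through the disjoint edge-families to $A\setminus B$, $B\setminus A$, $C$ gives
\[
\Prob(Y_w=1\mid\mathcal G_{k-1})\le\bigl(1-(1-p)^{|A|}\bigr)\bigl(1-(1-p)^{|B|}\bigr)+\bigl(1-(1-p)^{|C|}\bigr).
\]
On $\Omega^*_{k-1,x,z}$ we have $|A|,|B|\le(1+o(1))(np)^{k-1}$, so summing the first term over the at most $n$ admissible $w$ yields the main contribution $(1+o(1))M_k$; the second term contributes at most $n\,|C|\,p$, which on $D_{k-1}$ is at most $10\,n\,M_{k-1}\,p=10\,M_k/(np)=o(M_k)$ since $np\to\infty$. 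Thus $\E[\,|\Gamma_k(x)\cap\Gamma_k(z)|\mid\mathcal G_{k-1}]=(1+o(1))M_k$ on $\Omega^*_{k-1,x,z}\cap D_{k-1}$, and Chernoff against the threshold $10M_k$ (a factor $\approx 10$ above the mean) gives $\Prob(D_k^c\mid\mathcal G_{k-1})\le n^{-10}$ provided $M_k\to\infty$; for the relevant top level $M_{d-1}\to\infty$ (it is at least of order $\log n$), which is what makes the final statement clean. Bounding $\Prob(D_k^c\cap\Omega^*_{k,x,z})\le\Prob(D_k^c\cap\Omega^*_{k-1,x,z}\cap D_{k-1})+\Prob(D_{k-1}^c\cap\Omega^*_{k-1,x,z})$, integrating the first term over $\mathcal G_{k-1}$ and using the induction hypothesis on the second closes the induction; dividing by $\Prob(\Omega^*_{d-1,x,z})=1-o(1)$ from \cref{lemma 8} then gives the stated conditional bound.

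The main obstacle is twofold. Conceptually, the overlap term forces the level-by-level induction: controlling $|\Gamma_{k-1}(x)\cap\Gamma_{k-1}(z)|$ is itself the lemma one level down, and one must verify that the factor $np$ of slack is genuinely present at every step (it is, since $M_k/(np\,M_{k-1})=np\to\infty$). Technically, the conditional-independence step needs care: some edges from $w$ to $\Gamma_{k-1}$ are in fact \emph{forced absent} by the conditioning $w\notin N_{k-1}$, but such forcings only \emph{decrease} each $\Prob(Y_w=1\mid\mathcal G_{k-1})$ and never affect independence across distinct $w$, so they are harmless for the upper tail. Finally, at the lowest levels where $M_k=o(1)$ the relative Chernoff deviation is inapplicable; there I would instead use a crude Poisson-type tail with a fixed constant threshold (large enough to force probability $\le n^{-10}$ and small compared with $M_{k+1}/(np)$), which suffices to feed the induction up to the top level where the clean $10\,M_{d-1}$ bound emerges.
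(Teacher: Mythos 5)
The paper itself offers no proof of this lemma (it is dispatched with a citation to \cite[Lemma 5]{bollobas1981diameter}), so your attempt has to stand on its own. Its architecture is the right one and matches what any proof of this statement must do: expose the two neighbourhoods level by level, observe that the indicators $Y_w$ are conditionally independent across $w$ given the exploration up to level $k-1$ (with some edges forced absent, which only helps the upper tail), split the conditional mean into a ``coincidental'' part of order $M_k$ and a ``propagated'' part driven by $\Gamma_{k-1}(x)\cap\Gamma_{k-1}(z)$, and finish with Chernoff. The decomposition of $\Prob(Y_w=1\mid\mathcal G_{k-1})$ through the edge-families to $A\setminus B$, $B\setminus A$ and $A\cap B$ is correct, as is the observation that $M_{d-1}\gtrsim\log n$ makes the top-level Chernoff step work.

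The gap is quantitative and sits in the inductive hypothesis; the constant-threshold patch you propose for the low levels does not repair it. Suppose $d\ge 4$ and let $v$ be a common neighbour of $x$ and $z$; this event has probability about $np^{2}=n^{2/d-1+o(1)}$, which is far larger than $n^{-10}$. On this event every neighbour $u$ of $v$ with $u\nsim x$, $u\nsim z$ satisfies $d(x,u)=d(z,u)=2$, so $|\Gamma_2(x)\cap\Gamma_2(z)|\ge (1-o(1))np$ with high probability, while $10M_2=10\,n^{4/d-1+o(1)}=o(np)$ precisely when $d\ge4$. Hence $\Prob\l(|\Gamma_2(x)\cap\Gamma_2(z)|>10M_2\r)\ge n^{2/d-1+o(1)}\gg n^{-10}$: your event $D_2$ is simply not a probability-$(1-n^{-10})$ event, and neither is any constant-threshold replacement, since the overlap there is of size $np\to\infty$. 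The same propagation makes $|\Gamma_k(x)\cap\Gamma_k(z)|\gtrsim(np)^{k-1}\gg M_k$ on this event for all $2\le k\le d-2$. Your telescoping bound $\Prob(D_k^c)\le\Prob(D_k^c\cap D_{k-1})+\Prob(D_{k-1}^c)$ therefore only delivers $n^{2/d-1+o(1)}$ at the top, not $n^{-10}$; and your claim that a constant is ``small compared with $M_{k+1}/(np)=npM_k$'' fails at the low levels, where $npM_k=n^{(2k+1)/d-1+o(1)}=o(1)$ for $2k+1<d$. The repair is to run the induction with thresholds $T_k\asymp (np)^{k-1}+M_k$, which track the multiplicative growth of an early coincidence, and then to check at the final step that the propagated term is harmless: $(np)^{d-2}=M_{d-1}\cdot n/(np)^{d}=M_{d-1}/\log(n^{2}/c)=o(M_{d-1})$ by Assumption~\ref{ass bollobas}. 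That identity is the one genuinely non-obvious input to the lemma, and it is missing from your argument. (For $d=2,3$ there are no intermediate levels and your proof does go through.)
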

\noindent The proof of \Cref{lemma:intersection} follows from \cite[Lemma 5]{bollobas1981diameter}. Hence we skip its proof.

\begin{proof}[Proof of \Cref{X alpha and X beta=1}]
	Let $\alpha ,\beta \in I$ be such that $\alpha = (x,y)$, $\beta = (z,w)$ with $\alpha\neq \beta$. Let $a'$ and $b'$ be defined as in \eqref{definiton a,b}.
To calculate the required probability, we take two cases.

\vspace{.1cm}
\noindent \underline {Case-I}: Suppose $\{x,y\}\cap \{w,z\}=\emptyset$. That is, all vertices $x,y,w,z$ are distinct. Let $m , m^\prime \in \mathbb{N}$.
Observe that, given $\left \vert \Gamma_{d-1} (x) \right\vert=m $ and  $\left \vert \Gamma_{d-1} (z) \right\vert =m^\prime$, the events $ \{ X_\alpha = 1 \}$ (resp. $\{ X_\beta = 1 \}$) occurs if there are no edges connected from $y$ (resp. $w$) to $\Gamma_{d-1}(x)$ (resp. $\Gamma_{d-1}(z)$). Therefore 
\begin{align} \label{X alpha=1 beta=1 given omega d-1}
	\Prob \left( X_ \alpha =1, X_\beta = 1 | \left \vert \Gamma_{d-1} (x) \right\vert=m, \left \vert \Gamma_{d-1} (z) \right\vert =m^\prime \right)
	&= \left( 1-p \right)^ { m + m^\prime}.
\end{align}
From the definition of $\Omega_{d-1,x,z}^*$ in \Cref{lemma 8} and $a',b'$ as in \eqref{definiton a,b}, it is clear that
\begin{align} \label{Omega subset of Gamma 2}
	\Omega_{d-1,x,z}^* \subseteq \{ a' \leq \left\vert \Gamma_{d-1}(x) \right\vert , \left\vert \Gamma_{d-1}(z) \right\vert \leq b' \}.
\end{align}	
Using similar type of inequality as in \eqref{upperbound of Xalpha}, \eqref{lowerbound of Xalpha}, and then by \eqref{X alpha=1 beta=1 given omega d-1}, we obtain the upper and lower bound of $\Prob \left( X_ \alpha =1,X_\beta = 1 \right) $ as follows:
	\begin{align}\label{upperbound of Xalpha Xbeta}
	\begin{split}
		\Prob \left( X_ \alpha =1,X_\beta = 1 \right) \leq \left( 1-p \right)^ { 2a'} + \sum\limits_ {m,m^\prime \in [a,b]^c}\Prob \left( \left \vert \Gamma_{d-1} (x) \right \vert =m, \left \vert \Gamma_{d-1} (z) \right \vert=m^\prime \right),
	\end{split}
\end{align} 
	\begin{align}\label{lowerbound of Xalpha Xbeta}
	\Prob \left( X_ \alpha =1,X_\beta = 1 \right) 
	\geq  \left( 1-p \right)^{2b'} \sum\limits_ {m,m^\prime \in [a',b']} \Prob \left( \left \vert \Gamma_{d-1} (x) \right \vert =m, \left \vert \Gamma_{d-1} (z) \right \vert=m^\prime \right).
\end{align}
Using the inequality $e^ {-p(1+p)} \leq 1-p \leq e^{-p}$ and \eqref{Omega subset of Gamma 2} in \eqref{upperbound of Xalpha Xbeta} and \eqref{lowerbound of Xalpha Xbeta}, we obtain
\begin{align*}
  e^ {-2b'p(1+p)} \Prob \left( \Omega_{d-1,x,z}^* \right) \leq \Prob \left( X_ \alpha =1,X_\beta = 1 \right) \leq e^{-2a'p} +  \Prob \left( (\Omega_{d-1,x,z}^* )^c \right),
\end{align*}
where $a',b'$ are as in \eqref{definiton a,b}. Consequently, by a similar argument as in the proof of \Cref{X alpha=1}, and by \cref{lemma 8} we get
\begin{align*}
	\left( \frac{c^2}{n^4} \right)^{(1+p)(1+ \eta_{d-1} )} 
	\left( 1- \frac{6 (d-1) }{n^{10} } \right) 
	\leq \Prob \left( X_ \alpha =1,X_\beta = 1 \right) \leq \left( \frac{c^2}{n^4} \right)^ {1- \eta_{d-1}} +\frac{6 (d-1) }{n^{10} }. 
\end{align*}
Which gives the result, in this case, as $p\log n\to 0$ and $\eta_{d-1}\log n\to 0$ when $n\to \infty$.

\vspace{.1cm}
\noindent \underline {Case-II}: Suppose $\{x,y\}\cap \{w,z\}\neq \emptyset$. That is, all vertices $x,y,w,z$ are not distinct. Without loss of generality we assume that $y = w$. Then we have 
\[
\Prob \left( X_ \alpha =1, X_\beta = 1 | \left \vert \Gamma_{d-1} (x)\cup  \Gamma_{d-1} (z) \right \vert =m \right)
= \left( 1-p \right)^ {m }.
\]
	Given $\Omega _{d-1,x,z}^*$,  using \Cref{lemma:intersection} we have
\begin{align} \label{bounds of union}
2 \left( 1 - \eta_{d-1} \right) (p n)^{d-1} \left( 1-o(1) \right)
\leq \left \vert \Gamma_{d-1} (x) \cup \Gamma_{d-1} (z) \right \vert 
\leq 2 \left( 1 + \eta_{d-1} \right) (p n)^{d-1},
\end{align}
with probability at least $  1 - n^{-10}$. Which further implies that \eqref{bounds of union} holds with probability at least $ \left( 1 - n^{-10} \right) \Prob \left( \Omega _{d-1,x,z}^*\right) $. By \Cref{lemma 8}, we have
\[
 \left( 1 - n^{-10} \right) \Prob \left( \Omega _{d-1,x,z}^*\right) \geq 1 - \frac{1}{n^{9}} 
\]
Thus by a similar argument as in \underline{Case-I}, we obtain
 \begin{align*}
 	\left( \frac{c^2}{n^4} \right)^{(1+p)(1+ \eta_{d-1} )} 
 	\left( 1- \frac{1 }{n^{9} } \right) 
 	\leq \Prob \left( X_ \alpha =1,X_\beta = 1 \right) \leq \left( \frac{c^2}{n^4} \right)^ {(1- \eta_{d-1})  \left( 1-o(1) \right)} +\frac{1 }{n^{9} }. 
 \end{align*}
 Which gives the result, in this case, as $p\log n\to 0$ and $\eta_{d-1}\log n\to 0$ when $n\to \infty$.
\end{proof}


\section{Proof of \Cref{main theorem}}\label{sec:HG}
In this section, we first present the proof of \cref{main theorem} using \Cref{proposition 2} and the following lemma, which is analogous to \Cref{probability comparision}. The proof of the lemma is given at the end of this section and the proof of \Cref{proposition 2} is provided in the next section. Throughout this section and the next, we focus on the case $t\geq 3$, since the case $t=2$ was treated separately in the earlier sections.

\begin{lemma} \label{lemma 25}
	Let $ 0 < p_1 \le p_2 \leq 1$, and $r$ be a positive integer and $\mathcal H\in \mathscr{H}[n]$, then 
	\[
	\Prob _1 \left( diam(\mathcal{H})\leq r \right) \leq \Prob _2 \left( diam(\mathcal{H})\leq r \right) ,
	\]
	where $\Prob _i $ denotes the probability in the space $ \mathscr{H} (n,t,p_i ) ,$ for $1 \leq i \leq 2$.
\end{lemma}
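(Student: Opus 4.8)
The plan is to mimic the monotone-coupling argument used for \Cref{probability comparision}, now carried out over hyperedges instead of edges. First I would let $\mathcal{E}=\{e\subseteq [n]:|e|=t\}$ denote the collection of all $\binom{n}{t}$ possible hyperedges, and introduce i.i.d.\ uniform random variables $Z_e$ on $[0,1]$, one for each $e\in\mathcal{E}$. Setting $X'=\left(\mathbbm{1}_{\{Z_e\le p_1\}}\right)_{e\in\mathcal{E}}$ and $Y'=\left(\mathbbm{1}_{\{Z_e\le p_2\}}\right)_{e\in\mathcal{E}}$ produces a coupling in which $X'\overset{d}{=}\mathscr{H}(n,t,p_1)$ and $Y'\overset{d}{=}\mathscr{H}(n,t,p_2)$, and because $p_1\le p_2$ we have $\mathbbm{1}_{\{Z_e\le p_1\}}\le \mathbbm{1}_{\{Z_e\le p_2\}}$ for every $e$. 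Hence, for each realisation $w\in[0,1]^{\binom{n}{t}}$, the hypergraph $X'(w)$ is a sub-hypergraph of $Y'(w)$: same vertex set, fewer hyperedges.

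The one place where the hypergraph structure genuinely enters is the monotonicity that drives the comparison, so I would argue it explicitly rather than borrowing it verbatim from the graph setting. The claim is that $\{d_\mathcal{H}(x,y)\le r\}$ is increasing in the set of present hyperedges. Indeed, a path from $x$ to $y$ in a hypergraph is a vertex/hyperedge alternating sequence whose validity depends only on the presence of the hyperedges it traverses; adding further hyperedges can never invalidate such a sequence, so every path available in $X'(w)$ remains available in $Y'(w)$ and distances can only decrease as hyperedges are added. Consequently $\{d_\mathcal{H}(x,y)\le r\}$ holding for $X'(w)$ forces it to hold for $Y'(w)$.

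With these two ingredients the comparison follows exactly as in the graph case. For each fixed pair $(x,y)$, the pointwise inclusion $X'(w)\subseteq Y'(w)$ together with the monotonicity gives
\begin{align*}
\Prob_1\left(d_\mathcal{H}(x,y)\le r\right)
&=\Prob\left(\{w:X'(w)=\mathcal H,\ d_\mathcal{H}(x,y)\le r\}\right)\\
&\le \Prob\left(\{w:Y'(w)=\mathcal H,\ d_\mathcal{H}(x,y)\le r\}\right)\\
&=\Prob_2\left(d_\mathcal{H}(x,y)\le r\right).
\end{align*}
Since $\{diam(\mathcal H)\le r\}=\bigcap_{(x,y)}\{d_\mathcal{H}(x,y)\le r\}$ is an intersection of increasing events and hence itself increasing, the same inclusion $X'(w)\subseteq Y'(w)$ preserves it, yielding $\Prob_1\left(diam(\mathcal H)\le r\right)\le \Prob_2\left(diam(\mathcal H)\le r\right)$.

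I do not anticipate a real obstacle here, as the argument is structurally identical to the proof of \Cref{probability comparision}; the single point requiring care is the monotonicity of $\{d_\mathcal{H}(x,y)\le r\}$ under hyperedge addition, which I would justify through the hypergraph definition of a path as indicated above.
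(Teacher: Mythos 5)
Your proposal is correct and follows essentially the same monotone-coupling argument as the paper's proof, which likewise uses uniform random variables $Z_e$ on $[0,1]^{\binom{n}{t}}$ to realise $\mathscr{H}(n,t,p_1)$ as a sub-hypergraph of $\mathscr{H}(n,t,p_2)$. Your explicit justification that $\{d_\mathcal{H}(x,y)\le r\}$ is increasing under hyperedge addition is a point the paper leaves implicit, but it is the right observation and the rest matches the paper's route.
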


\begin{proof} [Proof of \cref{main theorem}]
The result follows from  \Cref{lemma 25} and \Cref{proposition 2} using the same arguments as in the proof of \Cref{Bollobas theorem}. We skip the details.
\end{proof}

Now we prove \cref{lemma 25}. The result follows using the same arguments as in the proof of \Cref{probability comparision}. For the sake of completeness, we outline the proof.
\begin{proof} [Proof of \cref{lemma 25}]
	Let $ \mathcal{E} = \{ e_1, e_2,\ldots ,e_{ {n \choose t}} \}$ denote the set of all $ {n \choose t} $ hyperedges of size $t$ on the vertex set $[n]=\{ 1,2,...,n\}$.  Consider that $ X= \left( X_{e} : e \in \mathcal{E} \right)$ and $ Y= \left( Y_{e} : e \in \mathcal{E} \right)$ are two random variables, where $ \{X_{e}\, ; e \in \mathcal{E} \}$ are i.i.d. Bernoulli($p_1$) and $ \{Y_{e}\,; e \in \mathcal{E}\}$  are i.i.d.  Bernoulli($p_2$) random variables. Consider $ Z= \left( Z_{e}: e \in \mathcal{E} \right)$
	on $ [0,1]^{n \choose t} $, where $ \{Z_{e}\, ; e \in \mathcal{E} \}$ are i.i.d. uniform random variables on $ [0,1]$. We define following random variables
	\[
	X^{\prime}= \left( \mathbbm{1}_ { \{ Z_{e} \leq p_1 \} } : e \in \mathcal{E} \right)  \mbox{ and } Y^{\prime}= \left( \mathbbm{1}_ { \{ Z_{e} \leq p_2 \} } : e \in \mathcal{E} \right).
	\]
It is easy to see that $ X^{\prime} \overset{d}{=} X $ and $ Y^{\prime} \overset{d}{=} Y $. Therefore $ \left( X^{\prime}, Y^{\prime} \right)$ is a coupling of $X$ and $Y$. Clearly,
	\(
	\mathbbm{1}_ { \{ Z_e \leq p_1 \} } \left( w_e \right) \leq  \mathbbm{1}_ { \{ Z_e \leq p_2 \} } \left( w_e \right), \text{ for all }  w_e \in [0,1] .
\)
	Thus  $X^{\prime} (w)$ is a subgraph of $Y^{\prime} (w)$ for each $w =\left( w_e \right) \in [0,1]^{ \binom {n}{t} } $.
	Consequently, we have 
	\[
	\Prob_1 \left( d_\mathcal{H} (x,y) \leq r \text{ for all } (x,y) \in I \right) \leq \Prob_2 \left( d_\mathcal{H} (x,y) \leq r \text{ for all } (x,y) \in I \right) .
	\]
	Hence the result.
\end{proof}

\section{Proof of \cref{proposition 2}}  \label{sec:prop2}  
This section is dedicated to proving \Cref{proposition 2}. We follow the similar steps as in the proof of \Cref{proposition 1}. However, due to the complexity  in the structure of the model, the computations  will be challenging in this case. The following lemmas will be used in the proof of the proposition.

\begin{lemma} \label{X alpha=1 in hypergraph}
	Let $t,c,d,n,p$ be as in Assumption~\ref{ass main} and $\mathcal{H} \in \mathscr{H} (n,t,p)$.  Suppose  $\alpha = (x,y)$, where $x$ and $y$ are two vertices and  
	\begin{equation} \label{def X alpha in hypergraph}
		X_ \alpha =
		\begin{cases}
			1 & \text{when } d_\mathcal{H} (x,y)>d \\
			0 & \text{otherwise.}
		\end{cases}
	\end{equation}
	Then, for each $\alpha\in I$, we have
	\[
	\Prob \left( X_ \alpha =1 \right) \approx \frac{c}{n^2} , \mbox{ as $n\to \infty$}.
	\]
\end{lemma}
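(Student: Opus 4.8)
\textbf{Proof plan for Lemma~\ref{X alpha=1 in hypergraph}.}
The plan is to mirror the structure of the proof of \Cref{X alpha=1}, conditioning on the size of the sphere $\Gamma_{d-1}(x)$ and using the event that this size concentrates near its typical value $(t-1)^{d-1}N^{d-1}p^{d-1}$. First I would set up the hypergraph analogues of the growth estimates: for a fixed vertex $x$ I need a ``good event'' $\Omega_{d-1,x}^*$ on which $\bigl||\Gamma_{k}(x)|-(t-1)^{k}N^{k}p^{k}\bigr|\le \eta_k (t-1)^{k}N^{k}p^{k}$ for all $1\le k\le d-1$, holding with probability $1-O(n^{-10})$. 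The key combinatorial difference from the graph case is the one-step branching factor: each vertex $v\in\Gamma_{k-1}(x)$ lies in $N=\binom{n-1}{t-1}$ potential hyperedges, each present with probability $p$, and a present hyperedge contributes its $t-1$ other vertices to the next sphere. Hence a single vertex reaches roughly $(t-1)Np$ new vertices in one step, which explains the factor $(t-1)^{d-1}N^{d-1}p^{d-1}$ and the form of Assumption~\ref{ass main}. I would establish these concentration bounds by the same Chernoff-plus-union-bound scheme as in \Cref{E_k(x)}, \Cref{lemma 4}, and \Cref{lemma 6}, with binomial parameters adjusted to the hypergraph counts.

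Granting $\Omega_{d-1,x}^*$, the core computation is the conditional probability that $d_{\mathcal H}(x,y)>d$ given $|\Gamma_{d-1}(x)|=m$. Here $d_{\mathcal H}(x,y)>d$ means $y$ is joined to $\Gamma_{d-1}(x)$ by no hyperedge, i.e. every potential hyperedge containing both $y$ and at least one vertex of $\Gamma_{d-1}(x)$ is absent. I would count the number of such ``bridging'' hyperedges: a $t$-set containing $y$ and meeting a fixed $m$-element set $\Gamma_{d-1}(x)$, with the remaining $t-2$ vertices drawn from $[n]$. Since $m=\Theta\bigl((t-1)^{d-1}N^{d-1}p^{d-1}\bigr)=o(n)$, the dominant contribution comes from $t$-sets meeting $\Gamma_{d-1}(x)$ in exactly one vertex, giving asymptotically $m\binom{n-2}{t-2}$ bridging hyperedges, with lower-order corrections from multiple intersections. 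Writing $M$ for this count, the conditional probability is $(1-p)^{M}$, and using $e^{-p(1+p)}\le 1-p\le e^{-p}$ together with $m\binom{n-2}{t-2}\approx (t-1)\,m\,N$ (because $\binom{n-2}{t-2}=\frac{t-1}{n-1}\binom{n-1}{t-1}\approx (t-1)N/n$) converts the bound into $\exp\bigl(-p(t-1)Nm/n\cdot(1+o(1))\bigr)$.

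Substituting the range $m\in[a',b']$ with $a',b'\approx (t-1)^{d-1}N^{d-1}p^{d-1}$ yields exponent $\approx (t-1)^{d}N^{d}p^{d}/n$, which by Assumption~\ref{ass main} equals $\log(n^2/c)$, so the conditional probability is $\approx c/n^2$. Sandwiching via the upper bound \eqref{upperbound of Xalpha}-type estimate (splitting off the small-probability tail $\Prob((\Omega_{d-1,x}^*)^c)=O(n^{-10})$) and the lower bound \eqref{lowerbound of Xalpha}-type estimate, and noting $p\log n\to 0$ and $\eta_{d-1}\log n\to 0$, gives $\Prob(X_\alpha=1)\approx c/n^2$. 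The main obstacle I anticipate is \emph{not} the final substitution but the bookkeeping in counting bridging hyperedges and in re-deriving the sphere-growth concentration: I must verify that the overcounting from $t$-sets meeting $\Gamma_{d-1}(x)$ in two or more vertices, and the vertex-overlap corrections that make $\Gamma_k(x)$ slightly smaller than the naive $(t-1)Np\,|\Gamma_{k-1}(x)|$, are both $o(1)$ relative to the main term after multiplying by $p$ and exponentiating. This is where the hypergraph structure genuinely complicates the argument relative to Bollob\'as's graph case, and it is the step I would write most carefully.
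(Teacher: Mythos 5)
Your proposal follows essentially the same route as the paper's proof: condition on the concentration event $\Omega_{d-1,x}^*$ for the sphere sizes (built from the hypergraph analogues of the Chernoff/growth lemmas), count the bridging hyperedges through $y$ as $\approx |\Gamma_{d-1}(x)|\binom{n-2}{t-2}\approx (t-1)N|\Gamma_{d-1}(x)|/n$, write the conditional probability as $(1-p)^{M}$, and sandwich using $e^{-p(1+p)}\le 1-p\le e^{-p}$ together with Assumption~\ref{ass main}. The only cosmetic difference is that the paper conditions directly on the bridging count $|H_d(x,y)|$ rather than on $|\Gamma_{d-1}(x)|$, which changes nothing of substance.
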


\begin{lemma} \label{X alpha=1 beta=1 in hypergraph}
	Let $t,c,d,n,p$ be as in Assumption~\ref{ass main} and $\mathcal{H} \in \mathscr{H} (n,t,p)$. Suppose $I$ is the index set as defined in \eqref{def I}. Then, for  $ \alpha , \beta \in I$ and $\alpha \neq \beta$ 
	\[
	\Prob \left( X_ \alpha =1,X_\beta = 1 \right) \approx \frac{c^2}{n^4}, \mbox{ as $n\to \infty$},
	\]
	where $X_\alpha$ is as defined in \eqref{def X alpha in hypergraph}.
\end{lemma}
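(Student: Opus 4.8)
The plan is to mirror the proof of \Cref{X alpha and X beta=1}, replacing every edge-count by the corresponding hyperedge-count. Write $\alpha=(x,y)$ and $\beta=(z,w)$. As $\alpha\neq\beta$ and each is an unordered pair, the two pairs share at most one vertex, so there are only two cases: (I) $\{x,y\}\cap\{z,w\}=\emptyset$; and (II) exactly one common vertex, which after relabelling (using $X_{(u,v)}=X_{(v,u)}$) I take to be $y=w$. I abbreviate $\mu:=(t-1)Np$, so that Assumption~\ref{ass main} reads $p\,\mu^{d-1}(t-1)N/n=\log(n^2/c)$ and $\mu^{d-1}=(n\log(n^2/c))^{(d-1)/d}=o(n)$. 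Throughout I work on the concentration events $\Omega_{d-1,x}^{*}$, the hypergraph analogues of those in \Cref{lemma 6}, on which $|\Gamma_{d-1}(x)|$ lies within a factor $1\pm\eta_{d-1}$ of $\mu^{d-1}$ with probability $1-O(n^{-10})$.

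The one genuinely new ingredient is the conditional formula. Fix $y\notin N_{d-1}(x)$ and condition on $|\Gamma_{d-1}(x)|=m$. Then $\{X_\alpha=1\}=\{d_{\mathcal H}(x,y)>d\}$ holds iff no hyperedge joins $y$ to $\Gamma_{d-1}(x)$; any such joining hyperedge must avoid $N_{d-2}(x)$, for otherwise $y$ and a vertex of $N_{d-2}(x)$ would share a hyperedge and force $y\in N_{d-1}(x)$, and it is therefore fresh with respect to the breadth-first exploration of $N_{d-1}(x)$. The number of potential joining hyperedges equals $\binom{n-1}{t-1}-\binom{n-1-m}{t-1}$ up to a lower-order correction from excluding $N_{d-2}(x)$, so
\[
\Prob\bigl(X_\alpha=1\mid |\Gamma_{d-1}(x)|=m\bigr)\approx(1-p)^{\binom{n-1}{t-1}-\binom{n-1-m}{t-1}}.
\]
Since $\binom{n-1}{t-1}-\binom{n-1-m}{t-1}\approx(t-1)mN/n$ for $m=o(n)$, combining this with $e^{-p(1+p)}\le 1-p\le e^{-p}$ reproduces \Cref{X alpha=1 in hypergraph}; this computation is reused in both cases below.

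In Case~I I condition on $\Omega_{d-1,x,z}^{*}=\Omega_{d-1,x}^{*}\cap\Omega_{d-1,z}^{*}$, a hypergraph analogue of \Cref{lemma 8}. Writing $S_y$ and $S_w$ for the families of hyperedges joining $y$ to $\Gamma_{d-1}(x)$ and $w$ to $\Gamma_{d-1}(z)$, distinctness of the four vertices gives $\Prob(X_\alpha=1,X_\beta=1\mid\Gamma_{d-1}(x),\Gamma_{d-1}(z))=(1-p)^{|S_y|+|S_w|-|S_y\cap S_w|}$, and a member of $S_y\cap S_w$ must contain both $y$ and $w$ and meet both $\Gamma_{d-1}(x)$ and $\Gamma_{d-1}(z)$, so $p\,|S_y\cap S_w|=o(1)$; the exponent then splits, and the sandwich bound via $a',b'$ as in \eqref{definiton a,b}, together with the high-probability bound on $\Omega_{d-1,x,z}^{*}$, yields $\Prob(X_\alpha=1,X_\beta=1)\approx(c/n^2)^2=c^2/n^4$. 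In Case~II ($y=w$) the two events merge into the statement that $y$ lies in no hyperedge meeting $\Gamma_{d-1}(x)\cup\Gamma_{d-1}(z)$, so the exponent is controlled by $|\Gamma_{d-1}(x)\cup\Gamma_{d-1}(z)|$; a hypergraph analogue of \Cref{lemma:intersection} bounding $|\Gamma_{d-1}(x)\cap\Gamma_{d-1}(z)|\lesssim\mu^{2(d-1)}/n=o(\mu^{d-1})$ gives $|\Gamma_{d-1}(x)\cup\Gamma_{d-1}(z)|\approx 2\mu^{d-1}$, hence a joining-hyperedge count $\approx 2(t-1)\mu^{d-1}N/n$ whose product with $p$ is $\approx 2\log(n^2/c)$, and again $\Prob(X_\alpha=1,X_\beta=1)\approx c^2/n^4$.

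I expect the main obstacle to be the two lower-order estimates above: bounding the hyperedge overlap $p\,|S_y\cap S_w|=o(1)$ in Case~I and, more importantly, proving the hypergraph intersection bound $|\Gamma_{d-1}(x)\cap\Gamma_{d-1}(z)|=o(\mu^{d-1})$ of Case~II, along with the underlying concentration events $\Omega^{*}_{d-1,x}$. These are harder than in the graph setting because each hyperedge spans $t$ vertices, giving many more ways for the two explored neighbourhoods and their boundary hyperedges to overlap; the careful fresh-hyperedge bookkeeping through the breadth-first exploration, rather than the elementary $(1-p)^{\#}$ estimates themselves, is where the real work lies.
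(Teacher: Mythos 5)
Your proposal is correct and follows essentially the same route as the paper: condition on the number of hyperedges joining $y$ (resp.\ $w$) to $\Gamma_{d-1}(x)$ (resp.\ $\Gamma_{d-1}(z)$) — the paper's $H_d(x,y)$, $H_d(z,w)$ — use the $(1-p)^{\#}$ formula sandwiched between the concentration events $\Omega^*_{d-1,x,z}$, and split into the disjoint and one-shared-vertex cases, disposing of the overlap $|H_d(x,y)\cap H_d(z,w)|$ as lower order in Case I and invoking a sphere-intersection bound (the hypergraph analogue of \Cref{lemma:intersection}) in Case II. The obstacles you flag — the fresh-hyperedge bookkeeping and the intersection bound $|\Gamma_{d-1}(x)\cap\Gamma_{d-1}(z)|=o(\mu^{d-1})$ — are precisely the points the paper's own proof treats most tersely.
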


\begin{lemma} \label{stochastic dominance lemma for hypergraph}
	Let   $ \{ X_{\alpha} \}_{\alpha \in I}$ are the  Bernoulli random variables defined as in \eqref{def X alpha in hypergraph}. Consider the random variables $W_n,J$ and $W_n^*$  as in Fact~\ref{W size biased lemma}, then
	\[
	W_n +1 - X_J \leq _{st} W_n^*.
	\] 
\end{lemma}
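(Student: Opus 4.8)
The plan is to follow the proof of \Cref{lem:positivelyrelated} essentially verbatim, since that argument never used any graph-specific feature beyond the fact that each indicator event $\{X_\alpha = 1\}$ is an increasing event on the underlying product space. First I would observe that the Bernoulli variables $\{X_\alpha\}_{\alpha \in I}$ are exchangeable: the law of $\mathscr{H}(n,t,p)$ is invariant under any permutation of the vertex set $[n]$, and such a permutation permutes the pairs $\alpha = (x,y) \in I$ while preserving all hypergraph distances $d_{\mathcal{H}}(x,y)$. By exchangeability, exactly as in \Cref{lem:positivelyrelated}, it suffices to establish, for a fixed $\alpha \in I$,
\[
\sum_{\beta \ne \alpha} X_\beta \le_{\tiny st} \sum_{\beta \ne \alpha} X_\beta^\alpha,
\]
where $X_\beta^\alpha \overset{d}{=} (X_\beta \mid X_\alpha = 1)$; the passage from this inequality to the desired $W_n + 1 - X_J \le_{\tiny st} W_n^*$ is the identical conditioning-on-$J$ computation carried out there, using that $J$ is independent of $\{X_\alpha\}_{\alpha \in I}$.

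Next I would reduce the displayed stochastic domination to the single-coordinate statement $X_\beta \le_{\tiny st} X_\beta^\alpha$ for each $\beta \ne \alpha$. Since these are Bernoulli variables, this is equivalent to $\Prob(X_\beta = 1 \mid X_\alpha = 1) \ge \Prob(X_\beta = 1)$, i.e. to the positive correlation
\[
\Prob(X_\alpha = 1,\, X_\beta = 1) \ge \Prob(X_\alpha = 1)\,\Prob(X_\beta = 1).
\]
To obtain this I would invoke the Harris-FKG inequality (\Cref{ft:fkg}): realize $\mathscr{H}(n,t,p)$ as the product probability space $(\Omega, \mathcal{F}, \Prob)$ with $\Omega = \{0,1\}^{\binom{n}{t}}$, one coordinate per potential hyperedge, equipped with the partial order $w \prec w'$ iff $w(u) \ge w'(u)$ for all coordinates $u$. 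Once $X_\beta \le_{\tiny st} X_\beta^\alpha$ is in hand, the coupling $(\tilde X_\beta, \tilde X_\beta^\alpha)$ with $\tilde X_\beta \le \tilde X_\beta^\alpha$ almost surely and the summation step identical to the one following \eqref{eqn:lessthan} deliver the domination over $\beta \ne \alpha$, completing the reduction.

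The one place where the hypergraph structure must genuinely be checked — and the main (though mild) obstacle — is the monotonicity claim that each event $\{X_\alpha = 1\} = \{d_{\mathcal{H}}(x,y) > d\}$ is increasing with respect to $\prec$. Concretely, I would argue that the hypergraph distance $d_{\mathcal{H}}(x,y)$ is non-increasing in the set of present hyperedges: if $\mathcal{H}'$ is obtained from $\mathcal{H}$ by adjoining one hyperedge, then every vertex-edge alternating path in $\mathcal{H}$ remains such a path in $\mathcal{H}'$, so adding hyperedges can only create new paths or shorten existing ones and never lengthen a shortest one, whence $d_{\mathcal{H}'}(x,y) \le d_{\mathcal{H}}(x,y)$. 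Consequently, if a configuration $w$ (with more hyperedges) satisfies $d(x,y) > d$, then any $w' \succ w$ (with fewer hyperedges) also satisfies $d(x,y) > d$, so $\{X_\alpha = 1\}$ is increasing and \Cref{ft:fkg} applies, giving the positive correlation above. Apart from this verification, which is conceptually identical to the graph case with edges replaced by hyperedges of size $t$, no new ideas are required.
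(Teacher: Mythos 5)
Your proposal is correct and follows essentially the same route as the paper's own proof: exchangeability of $\{X_\alpha\}_{\alpha\in I}$, reduction to the single-coordinate domination $X_\beta \leq_{st} X_\beta^\alpha$ via the Harris--FKG inequality on the product space $\{0,1\}^{\binom{n}{t}}$ with the reversed partial order, and then the same coupling-and-conditioning-on-$J$ computation as in \Cref{lem:positivelyrelated}. The only difference is that you explicitly verify that adding a hyperedge cannot increase $d_{\mathcal H}(x,y)$, so that $\{X_\alpha=1\}$ is increasing --- a step the paper dismisses as ``similar to the graph case'' --- which is a welcome elaboration rather than a different argument.
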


The proofs of \cref{stochastic dominance lemma for hypergraph} and \Cref{proposition 2} are similar to those of \Cref{lem:positivelyrelated} and \Cref{proposition 1}. Therefore, we will first provide an outline of these proofs. The proofs for \Cref{X alpha=1 in hypergraph} and \Cref{X alpha=1 beta=1 in hypergraph} will be provided in the following two subsections.

\begin{proof}[Proof of \cref{stochastic dominance lemma for hypergraph}]
	Recall, the probability space 	$\mathscr{H} \left( n,t,p \right) = \left( \mathscr{H} [n], \mathscr{F}, \Prob \right)$, where the random variables $ \{ X_{\alpha} \}_{\alpha \in I}$ are defined. Observe that 	$\mathscr{H} \left( n,t,p \right)$ can be viewed as the space $\Omega = \{ 0,1\}^ { \binom {n}{t} }$ with the Bernoulli product measure. In the set $\Omega$, we define the partial order relation as follows:
	\[
	w \preceq w^\prime \text { if } w(u)\geq w^\prime (u) , \text{ for all } u \in \left[\binom{n}{t} \right].
	\]
	Similar to the graph case, it is easy to see that $\{ X_\alpha =1 \}$ is an increasing event with respect to this partial order. Hence, applying the Harris-FKG inequality,
	\begin{equation*}
		\Prob \left( X_\beta =1 \right) \Prob \left( X_\alpha = 1 \right) \leq \Prob \left( X_\beta =1 , X_\alpha = 1 \right).
	\end{equation*}
	 Which implies that, for fixed $\alpha \in I$,
	\begin{equation*} \label{X beta stochastically smaller}
		X_\beta \leq _{st} X_\beta^\alpha, \text { for all } \beta \neq \alpha.
	\end{equation*}
Note that the random variables $ \{ X_{\alpha} \}_{\alpha \in I}$ are exchangeable. The rest of the proof follows as in the proof of \Cref{lem:positivelyrelated}. We skip the details.
	\end{proof}

\begin{proof}[Proof of \Cref{proposition 1}]
	Recall $W_n $ denotes the total number of remote pairs in hypergraphs. To prove the result, we show that
	\[
	d_{TV} (W_n,Poi(c/2)) \to 0 \,\, \mbox{ as } n\to \infty.
	\] 
	By the  triangle inequality, we have 
	\begin{equation*} 
			d_{TV} \left(W_n,Poi( c/2 )\right) \leq d_{TV} \left(W_n,Poi(\E W_n)\right) + d_{TV} \left(Poi(\E W_n), Poi(c/2)\right).
		\end{equation*}
		The rest of the proof follows by combining Fact~\ref{ft:TVbound}, Fact~\ref{fact1}, \Cref{X alpha=1 in hypergraph} and \Cref{X alpha=1 beta=1 in hypergraph} and \Cref{stochastic dominance lemma for hypergraph}, similar to the proof of \Cref{proposition 1}. We omit the details.
\end{proof}

\subsection{Proof of \cref{X alpha=1 in hypergraph}}
Note that the proofs of the following lemmas are given under Assumption \ref{ass main}. The proof of \cref{X alpha=1 in hypergraph} is preceded by several supporting lemmas. Note that for a fixed vertex $x$, the {\it star} of $x $ is defined by
\(
H(x)= \{e \in \mathcal{E}: x\in e \}, 
\)
and we write $H_1 (x) = H(x)$.
Moreover for $k \geq 2$, we define
\[
H_k(x)= \{e \in \mathcal{E}: e \cap \Gamma_{k-1} (x) \neq \emptyset, e \cap N_{k-1}^c (x) \neq \emptyset \text{ and } e \cap N_{k-2} (x)= \emptyset \} .
\]
For a positive integer $k$, $ 1\leq k \leq d -1$, and a fixed vertex $x$, we define $\Omega_{k,x} $ to be the set of hypergraphs having $ \vert \Gamma_{k-1}(x) \vert $ and $ \vert N_{k-1}(x) \vert $  as follows:
\begin{equation} \label{hyp gamma_k-1}
\frac{1}{2}((t-1) Np)^{k-1} \leq \vert \Gamma_{k-1} (x) \vert \leq \frac{3}{2}((t-1) Np)^{k-1}, 
\end{equation}
recall $N = {n-1\choose t-1} $, and 
\begin{equation} \label{hyp N_k-1}
\vert N_{k-1} (x) \vert \leq 2((t-1) Np)^{k-1}.
\end{equation}
In other words, the set $\Omega_{k,x} $  (with an abuse of notaion) can be written as
\begin{equation} \label{def Omega k,x for hyp}
\Omega_{k,x} := \left\{ \mathcal{H} \in \mathscr{H}[n]: \mathcal{H} \text{ satisfies } \eqref{hyp gamma_k-1} \text{ and } \eqref{hyp N_k-1} \right\} .
\end{equation}

\begin{lemma} \label{lemma 14}
Let $t,c,d,n,p$ and  $N$ be  as in Assumption \ref{ass main}. Let $L \geq 72 (t-1) $ be a constant and $x$ be a fixed vertex. Given $\vert \Gamma_{k-1}(x) \vert$ and $\vert N_{k-1}(x) \vert$, define 
\[
a_k=\sum_{m=1}^{t-1} {\vert \Gamma_{k-1}(x) \vert \choose m} { n- \vert N_{k-1}(x) \vert\choose t- m} p \mbox{ and } \delta_k = \left [ \frac { L \log n}{  (t-1)^k N^k p^k  } \right ]^{1/2},
\]
for $ 1\leq k \leq d -1$. Then, for sufficiently large n, we obtain 
\[
\Prob \left(\l||H_k(x)|-a_k\r|>\delta_ka_k \given \Omega_{ k,x}\right)\le 2 n^ {- 6}.
\]

\end{lemma}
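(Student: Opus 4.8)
The plan is to follow the proof of \Cref{E_k(x)} line for line, replacing pairwise edges by $t$-subsets and carrying the extra combinatorial bookkeeping forced by the condition $e\cap N_{k-2}(x)=\emptyset$ in the definition of $H_k(x)$. The whole statement reduces to showing that, conditioned on the breadth-first exploration of $\mathcal H$ out to level $k-1$, the count $|H_k(x)|$ is a binomial random variable with mean $a_k$; the tail estimate then comes from the Chernoff bound once $a_k$ is known to be large on $\Omega_{k,x}$.

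First I would make the conditional-binomial claim precise. Let $\mathcal F_{k-1}$ be the $\sigma$-algebra generated by the exploration that reveals $N_{k-1}(x)$ and its frontier $\Gamma_{k-1}(x)$. To determine this data one only inspects hyperedges incident to $N_{k-2}(x)$: a vertex joins $\Gamma_{k-1}(x)$ precisely when it lies in some hyperedge meeting $\Gamma_{k-2}(x)\subseteq N_{k-2}(x)$, and a vertex is certified to lie in $N_{k-1}^c(x)$ precisely when no such hyperedge reaches it. Every hyperedge of $H_k(x)$ satisfies $e\cap N_{k-2}(x)=\emptyset$, hence is disjoint from the family examined by the exploration; conditioned on $\mathcal F_{k-1}$ these hyperedges therefore remain i.i.d.\ Bernoulli$(p)$. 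A hyperedge disjoint from $N_{k-2}(x)$ lies in $H_k(x)$ if and only if it has $m$ vertices in $\Gamma_{k-1}(x)$ and $t-m$ vertices in $N_{k-1}^c(x)$ for some $1\le m\le t-1$, so the number of candidate hyperedges equals $\sum_{m=1}^{t-1}\binom{|\Gamma_{k-1}(x)|}{m}\binom{n-|N_{k-1}(x)|}{t-m}$. Consequently $|H_k(x)|$, given $\mathcal F_{k-1}$, is binomial with mean exactly $a_k$.

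Next I would apply the Chernoff bound to this conditional binomial to get $\Prob(\,||H_k(x)|-a_k|>\delta_k a_k\mid \mathcal F_{k-1})\le 2\exp(-\delta_k^2 a_k/3)$, and then lower-bound $a_k$ on the event $\Omega_{k,x}$. Keeping only the $m=1$ term and using \eqref{hyp gamma_k-1}, \eqref{hyp N_k-1}, together with $|N_{k-1}(x)|=o(n)$ for $k\le d-1$ (which follows from Assumption~\ref{ass main}, since $((t-1)Np)^{k-1}=O\big((n\log n)^{(d-2)/d}\big)=o(n)$), one obtains $\binom{n-|N_{k-1}(x)|}{t-1}=N(1-o(1))$ and hence
\[
a_k\ \ge\ p\,|\Gamma_{k-1}(x)|\binom{n-|N_{k-1}(x)|}{t-1}\ \ge\ \tfrac12\,(t-1)^{k-1}N^k p^k\,(1-o(1)).
\]
Substituting $\delta_k^2=L\log n/\big((t-1)^kN^kp^k\big)$ collapses the product to $\delta_k^2 a_k/3\ge \frac{L}{6(t-1)}\log n\,(1-o(1))$, and the hypothesis $L\ge 72(t-1)$ forces this to exceed $6\log n$ for large $n$; thus $2\exp(-\delta_k^2 a_k/3)\le 2n^{-6}$ uniformly over the part of $\mathcal F_{k-1}$ lying in $\Omega_{k,x}$, which gives the stated conditional bound. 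One also notes $\delta_k<1$ for large $n$, since $((t-1)Np)^k\to\infty$ faster than $\log n$, so the Chernoff bound is applicable.

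The main obstacle is the conditional-binomial step. In the graph case it was automatic: an edge from $\Gamma_{k-1}(x)$ to $N_{k-1}^c(x)$ cannot meet $N_{k-2}(x)$, so no crossing edge had already been inspected. For $t\ge 3$ a hyperedge may straddle $N_{k-2}(x)$, $\Gamma_{k-1}(x)$ and $N_{k-1}^c(x)$ simultaneously, and such a hyperedge is \emph{not} fresh once the ball has been explored. This is exactly why $H_k(x)$ carries the defining restriction $e\cap N_{k-2}(x)=\emptyset$, and the care is to condition on the full exploration $\mathcal F_{k-1}$ rather than merely on the sizes $|\Gamma_{k-1}(x)|$ and $|N_{k-1}(x)|$, so that the independence of the candidate hyperedges is genuinely available.
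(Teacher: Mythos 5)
Your proposal is correct and follows essentially the same route as the paper: identify $|H_k(x)|$ as a conditional binomial with mean $a_k$, apply the Chernoff bound, and lower-bound $\delta_k^2 a_k$ on $\Omega_{k,x}$ using the $m=1$ term and $\binom{n-|N_{k-1}(x)|}{t-1}=N(1-o(1))$ so that $L\ge 72(t-1)$ yields the $2n^{-6}$ bound. Your justification of the conditional-binomial step via the exploration $\sigma$-algebra $\mathcal F_{k-1}$ and the restriction $e\cap N_{k-2}(x)=\emptyset$ is in fact more careful than the paper, which simply asserts the binomial distribution given the sizes $|\Gamma_{k-1}(x)|$ and $|N_{k-1}(x)|$.
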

\begin{proof} [Proof of \cref{lemma 14}]
Observe that, given $|\Gamma_{k-1}(x)|$ and $|N_{k-1}(x)|$, the random variable $\vert H_k(x) \vert$ has Binomial distribution with parameters $a_k$ and $p$. Hence, 
\[
\E \vert H_k(x) \vert = \sum_{m=1}^{t-1} {\vert \Gamma_{k-1}(x) \vert \choose m} { n- \vert N_{k-1}(x) \vert\choose t- m} p,
\]
as $m$ points of each edge are chosen from $\Gamma_{k-1}(x)$ and the rest of the $(t-m)$ points are chosen from $N_{k-1}(x)^c$.
Applying Chernoff bound,  given $|\Gamma_{k-1}(x)|$ and $|N_{k-1}(x)|$, we obtain
\begin{align}\label{eqn:chernoff14}
	\Prob(|H_k(x)-a_k|>\delta_ka_k)\le 2e^{-\frac{\delta_k^2a_k}{3}}
\end{align}
Observe that, given $\Omega_{ k,x}$,  we have 
\[
|\Gamma_{k,x}|=o(n), |N_{k,x}|=o(n) \mbox{ and } { n- \vert N_{k-1}(x) \vert\choose t- 1} = { n- 1 \choose t- 1} (1 - o(1)), \mbox{ as $n\to \infty$,}
\]
for $1\le k\le d-1$. Therefore, given $\Omega_{k,x}$, we have
\begin{align*}
\sum\limits_{m=1}^{t-1} {\vert \Gamma_{k-1}(x) \vert \choose m} { n- \vert N_{k-1}(x) \vert\choose t- m} p &= \vert \Gamma_{k-1}(x) \vert  { n- 1\choose t- 1} p  \left(1 + o(1) \right)
\\&= \vert \Gamma_{k-1}(x) \vert N p  \left(1 + o(1) \right), \mbox{ as $n\to \infty$.}
\end{align*}
Which implies that given the event $\Omega_{k,x}$, for large $n$,
\begin{align*}
	\delta_k^2a_k\ge \frac{L\log n}{4(t-1)}.
\end{align*}
Hence the result follows from \eqref{eqn:chernoff14}.
\end{proof}
Next two lemmas will be useful in calculating the lower bound of $ \vert \Gamma_k (x) \vert$. Let $H_k ^1(x)$ denote the set of hyperedges in $H_k(x) $ that consist exactly one vertex from $\Gamma_{k-1}(x) $ and remaining $t-1$ vertices from $ V(\mathcal{H}) \backslash N_{k-1}(x)$, that is,
\[
H_k ^1(x):= \{ e \in H_k(x) : \vert e \cap \Gamma_{k-1}(x) \vert = 1 \}.
\]

\begin{lemma} \label{lemma 15}
Let $t,c,d,n,p$ and  $N$ be as  in Assumption \ref{ass main} and $\delta_k$  be as in \Cref{lemma 14}. Let $x$ be a fixed vertex. 
Suppose, given $\Gamma_{k-1}(x)$,
\[
b_k= \vert \Gamma_{k-1}(x) \vert  { n- \vert N_{k-1}(x) \vert\choose t- 1}p, \mbox{ for $1\le k\le d-1$}.
\]
Then, for large n, we obtain 
\[
\Prob \left(\l||H_k ^1(x)|-\E[|H_k ^1(x)|]\r|>\delta_k\E[|H_k ^1(x)|]\given \Omega_{k,x}\right)\le n^{-6}.
\]
\end{lemma}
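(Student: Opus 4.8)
The plan is to repeat the argument of \Cref{lemma 14} almost verbatim, the only difference being that $H_k^1(x)$ counts a restricted family of hyperedges whose conditional law is still an exact binomial. First I would identify this law: a hyperedge belongs to $H_k^1(x)$ exactly when it contains precisely one vertex of $\Gamma_{k-1}(x)$ and its other $t-1$ vertices lie in $V(\mathcal H)\setminus N_{k-1}(x)$. Conditionally on $\Gamma_{k-1}(x)$ and $N_{k-1}(x)$ there are $|\Gamma_{k-1}(x)|\binom{n-|N_{k-1}(x)|}{t-1}$ such candidate hyperedges, each present independently with probability $p$; hence $|H_k^1(x)|$ is binomial with
\[
\E[|H_k^1(x)|]=|\Gamma_{k-1}(x)|\binom{n-|N_{k-1}(x)|}{t-1}p=b_k.
\]
Note that $b_k$ is precisely the $m=1$ summand of the quantity $a_k$ appearing in \Cref{lemma 14}, and the omitted terms ($m\ge 2$) are of smaller order on $\Omega_{k,x}$.

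Next I would apply the Chernoff bound exactly as in \eqref{eqn:chernoff14}, giving, conditionally on $\Gamma_{k-1}(x)$ and $N_{k-1}(x)$,
\[
\Prob\l(\l||H_k^1(x)|-b_k\r|>\delta_k b_k\r)\le 2e^{-\delta_k^2 b_k/3}.
\]
It then remains to bound the exponent from below on $\Omega_{k,x}$. Using $|N_{k-1}(x)|=o(n)$ there, and $t$ fixed, I would write $\binom{n-|N_{k-1}(x)|}{t-1}=N\l(1-o(1)\r)$, and combine it with the lower bound $|\Gamma_{k-1}(x)|\ge\tfrac12((t-1)Np)^{k-1}$ from \eqref{hyp gamma_k-1} to get $b_k\ge\tfrac12(t-1)^{k-1}N^kp^k\l(1-o(1)\r)$. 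Substituting $\delta_k^2=L\log n/((t-1)^kN^kp^k)$ yields $\delta_k^2 b_k/3\ge\frac{L\log n}{6(t-1)}\l(1-o(1)\r)\ge 12\log n\,\l(1-o(1)\r)$, since $L\ge 72(t-1)$. For large $n$ this exceeds $6\log n+\log 2$, so $2e^{-\delta_k^2 b_k/3}\le n^{-6}$; restricting to $\Omega_{k,x}$ gives the stated conditional bound.

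The computation carries essentially no obstacle: the single point needing care is the asymptotic identity $\binom{n-|N_{k-1}(x)|}{t-1}\approx N$, where one must verify that replacing $n-|N_{k-1}(x)|$ by $n-1$ inside a falling factorial of fixed length $t-1$ produces only a $1-o(1)$ multiplicative error. Since $|N_{k-1}(x)|=o(n)$ and $t$ is a constant, this is immediate, and the abundant slack from $L\ge 72(t-1)$ absorbs both this error and the harmless factor of $2$ from the Chernoff bound (which is why the conclusion here is $n^{-6}$ rather than the $2n^{-6}$ of \Cref{lemma 14}). The proof is therefore a direct transcription of \Cref{lemma 14} with $a_k$ replaced by its leading term $b_k$.
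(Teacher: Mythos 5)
Your proposal is correct and follows the paper's own argument essentially verbatim: both identify the conditional law of $|H_k^1(x)|$ as Binomial with parameters $|\Gamma_{k-1}(x)|\binom{n-|N_{k-1}(x)|}{t-1}$ and $p$, apply the Chernoff bound with $\delta_k$, and then lower-bound $\delta_k^2 b_k$ on $\Omega_{k,x}$ exactly as in the proof of \Cref{lemma 14}. Your explicit estimate $\delta_k^2 b_k/3\ge \frac{L\log n}{6(t-1)}(1-o(1))$ with $L\ge 72(t-1)$ is precisely the calculation the paper delegates to the phrase ``by estimating $\delta_k$ and $b_k$ as in the proof of \Cref{lemma 14}.''
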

\begin{proof} [Proof of \cref{lemma 15}]
Note that, given $|\Gamma_{k-1}(x)|$ and $|N_{k-1}(x)|$, the random variable $ \vert H_k ^1(x) \vert $ has Binomial distribution with parameters $\vert \Gamma_{k-1}(x) \vert  { n- \vert N_{k-1}(x) \vert\choose t- 1}  $ and $p$. Therefore,  given $|\Gamma_{k-1}(x)|$ and $|N_{k-1}(x)|$, the Chernoff bound with $ \delta_k$ yields
\[
\Prob \left(\left \vert \vert H_k ^1(x) \vert - b_k  \right \vert>\delta_k b_k\right)
\leq e^{-\frac{\delta_k^2b_k}{3}}.
\]
 The result follows by estimating  $\delta_k$ and $b_k$ as in the proof of \Cref{lemma 14}.
\end{proof}    
For a fixed vertex $ v \in  \Gamma_{k-1} (x)$, the set of hyperedges containing $v$ and remaining $(t-1)$ vertices from $ N_{k-1}^c (x)$ is denoted by $ H_k^1 (x,v)  $, and the set of vertices of the hyperedges of $ H_k^1 (x,v) $  is denoted by $ V( H_k^1 (x,v))$, that is, 
\[
H_k^1 (x,v) :=  \{ e \in H_k^1 (x) : v\in e \} \mbox{ and } V ( H_k^1 (x,v) ) :=  \bigcup_{e \in H_k^1 (x, v)}  V(e).
\] 
Note that $ \vert H_1^1 (x,x) \vert = \deg (x) $ and $ \bigcup \limits _{v \in  \Gamma_{k-1} (x) }  H_k^1 (x,v) = H_k^1 (x)$. Also, we define
\[
\vert H_k^{1,0} (x,v) \vert = \sup \{ i: \exists i \text { pairwise disjoint sets among } e_1 \backslash \{ v\}, \ldots ,e_l\backslash \{ v\}\}.
\]
where $H_k^1 (x,v)=\{e_1,\ldots,e_l\}$ (say).
\begin{lemma} \label{lemma 16}
Let $t,c,d,n,p$ and  $N$ be as in Assumption \ref{ass main} and $\delta_1$  be as in \Cref{lemma 14}. Let $1 \leq k \leq d -1$ and $ v \in  \Gamma_{k-1} (x) $ be a fixed vertex. 
Suppose 
\[
c_k={ n- \vert N_{k-1}(x) \vert\choose t- 1}p.
\]
Then, for large $n$, we have 
\begin{align}
&\Prob(|H_k^1(x,v)-c_k|>\delta_1c_k\given \Omega_{k,x})\le 2 n^ { - 12 } \mbox{ and }\label{eqn:lem161}
\\& \Prob(\vert H_k^{1,0} (x,v) \vert\ge  (1- \delta_1 )c_k-10\given \Omega_{k,x})\ge 1-  \frac{ 4(t-1)^{11} }{n^{11} } .\label{eqn:lem162}
\end{align}
\end{lemma}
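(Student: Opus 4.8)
The plan is to establish the two displayed bounds separately, reusing the machinery already set up for the hypergraph in \Cref{lemma 14} and \Cref{lemma 15}, and adapting the lower-bound counting step of the graph lemma \Cref{lemma 4}.

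For \eqref{eqn:lem161} I would argue exactly as in \Cref{lemma 15}. Conditionally on $\vert \Gamma_{k-1}(x)\vert$ and $\vert N_{k-1}(x)\vert$, the random variable $\vert H_k^1(x,v)\vert$ counts the present hyperedges among the $\binom{n-\vert N_{k-1}(x)\vert}{t-1}$ potential hyperedges formed by $v$ together with $t-1$ vertices of $N_{k-1}^c(x)$; since these appear independently with probability $p$, it has the binomial distribution with mean $c_k$. The Chernoff bound with $\delta_1$ gives $\Prob\big(\vert\, \vert H_k^1(x,v)\vert-c_k\,\vert>\delta_1 c_k\big)\le 2e^{-\delta_1^2 c_k/3}$, and on $\Omega_{k,x}$ one has $\vert N_{k-1}(x)\vert=o(n)$, so $c_k=Np(1-o(1))$ and $\delta_1^2 c_k\ge \tfrac{L\log n}{2(t-1)}$ for large $n$, just as in the estimate of $a_k$ and $\delta_k$ in \Cref{lemma 14}. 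Since $L\ge 72(t-1)$, the exponent $\delta_1^2 c_k/3$ is at least $12\log n$, which yields the bound $2n^{-12}$.

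The substantive point is the lower bound \eqref{eqn:lem162} on $\vert H_k^{1,0}(x,v)\vert$, the size of the largest pairwise-disjoint subfamily of $\{e\setminus\{v\}:e\in H_k^1(x,v)\}$. Here I would follow the counting scheme in the lower half of \Cref{lemma 4}. Condition on $\vert H_k^1(x,v)\vert=m$; by \eqref{eqn:lem161} it suffices to treat $m$ in the range $[(1-\delta_1)c_k,(1+\delta_1)c_k]$, the complement contributing only $2n^{-12}$. Given $m$, the non-$v$ vertex sets of the $m$ hyperedges are uniformly distributed distinct $(t-1)$-subsets of $N_{k-1}^c(x)$. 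Let $B_m$ be the event that these $m$ subsets are pairwise disjoint, so that they span $m(t-1)$ distinct vertices and $\vert H_k^{1,0}(x,v)\vert=m$, and let $\mathcal A_m^0$ be the event that at most $10$ of the subsets must be discarded to leave a pairwise-disjoint family, so that $\vert H_k^{1,0}(x,v)\vert\ge m-10$. Expressing the number of admissible vertex assignments as falling-factorial products in $n-\vert N_{k-1}(x)\vert$, I would bound $\vert B_m\vert$ below by the all-distinct count and $\vert \mathcal A_m\cap(\mathcal A_m^0)^c\vert$ above by the count of assignments in which at least $11$ subsets overlap an earlier one; the quotient telescopes to order $(t-1)^{11}/n^{11}$, the factor $(t-1)^{11}$ arising because each of the $\ge 11$ redundant hyperedges may meet a previous one in any of its $t-1$ free coordinates. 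Summing over $m$, combining with \eqref{eqn:lem161}, and recalling the conditioning on $\Omega_{k,x}$ then produces the bound $1-4(t-1)^{11}/n^{11}$.

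I expect the overlap count to be the principal obstacle. In the graph case each edge at $v$ contributes a single new endpoint, and two such edges can collide only at that endpoint, so redundancy is one-dimensional; for $t\ge 3$ a hyperedge carries $t-1$ free vertices and two hyperedges may meet in anywhere from $1$ to $t-1$ of them, so the bookkeeping of how many hyperedges must be removed, and the verification that this number exceeds $10$ only with probability $O\big((t-1)^{11}n^{-11}\big)$, is genuinely more delicate than in \Cref{lemma 4}; it is here that the hypothesis $L\ge 72(t-1)$ and the factor $(t-1)^{11}$ must be tracked carefully. As a sanity check, the deficit created by overlaps is in every regime of smaller order than the slack $\delta_1 c_k$ coming from \eqref{eqn:lem161}, so the additive constant $10$ is a convenient bookkeeping margin rather than a tight estimate, which is why the statement is robust.
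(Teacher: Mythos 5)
Your proposal is correct and follows essentially the same route as the paper: the Chernoff bound conditional on $\Omega_{k,x}$ for \eqref{eqn:lem161}, and for \eqref{eqn:lem162} the decomposition into the events $A_m$, $A_m^0$, $B_m$ with the falling-factorial comparison $\vert A_m\cap(A_m^0)^c\vert/\vert B_m\vert = O\bigl((t-1)^{11}n^{-11}\bigr)$, summed over $m\in[(1-\delta_1)c_k,(1+\delta_1)c_k]$. Your closing remarks about the $(t-1)^{11}$ factor and the role of $L\ge 72(t-1)$ match exactly where the paper's computation does the work.
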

\begin{proof} [Proof of \cref{lemma 16}]
Observe that, given $|N_{k-1}(x)|$,  the random variable $ \vert  H_k^1 (x,v) \vert$ has Binomial distribution with parameters $ { n- \vert N_{k-1}(x) \vert\choose t- 1} $ and $p$. Then \eqref{eqn:lem161} holds by the Chernoff bound with $\delta_1$ and the similar calculation as in \Cref{lemma 14}. We skip the details and proceed to prove \eqref{eqn:lem162}. 

For $m \in \mathbb{N}$ and $x,v$ as mentioned above,  we define
\begin{align*}
A_m &= \{ \mathcal{H} \in \mathscr{H} [n]: \vert H_k^1 (x,v) \vert = m \},
\\A_m^0& = \{ \mathcal{H} \in A_m : \vert H_k^{1,0} (x,v) \vert \geq m-10 \},
\\B_m &= \{ \mathcal{H} \in A_m: \vert H_k^{1,0} (x,v) \vert = m \}.
\end{align*}
Let $A=\{ \mathcal{H} \in \mathscr{H} [n]\suchthat (1-\delta_1)c_k\le |H_k^1(x,v)|\le (1+\delta_1)c_k\}$, where $c_k$ is as defined in \Cref{lemma 16}. Then, as the events $A_m $ are disjoint, we have 
\[ 
\Prob (A) = \sum\limits_{ m= (1- \delta_1 )c_k}^{ (1+ \delta_1 ) c_k} \Prob (A_m).
\]
Next, using the fact that $\Prob (A_m^0 ) = \Prob ( A_m) - \Prob (A_m \cap (A_m^0)^c)$, we obtain
\begin{equation} \label{Probability of A m naught}
	\Prob (A_m^0 ) = \Prob ( A_m) \left[1 - \frac {\vert A_m \cap (A_m^0)^c \vert} {\vert A_m \vert} \right] \geq \Prob ( A_m) \left[1 - \frac {\vert A_m \cap (A_m^0)^c \vert} {\vert B_m \vert} \right].
\end{equation}
The last inequality is a consequence of the fact that $B_m \subset A_m$. If $H_k^1 (x,v)=\{e_1,\ldots,e_l\}$ and  $e_1\backslash \{ v\}, \ldots, e_l\backslash \{v\}$ are disjoint sets, then we have
\begin{align*}
	\vert B_m \vert 
	&= { n- \vert N_{k-1}(x) \vert\choose t- 1}  \cdots { n- \vert N_{k-1}(x) \vert -(m-1)(t-1) \choose t- 1}.
\end{align*}
Note that $A_m \cap (A_m^0)^c $ contains at most $m-11$ disjoint hyperedges (here, disjoint means they have only one common vertex that is $v$). Therefore, for large $n$, we get 
\begin{align*}
	 \vert A_m \cap (A_m^0)^c \vert =& \left \vert  \{ \mathcal{H} \in A_m  : \vert H_k^{1,0} (x,v) \vert < m-10 \} \right \vert 
	\\=& { n- \vert N_{k-1}(x) \vert\choose t- 1} \cdots { n- \vert N_{k-1}(x) \vert -(m- 12)(t-1) \choose t- 1} \\
	& { n- \vert N_{k-1}(x) \vert -(m-11)(t-1) \choose t- 2} \cdots 
	\\&{ n- \vert N_{k-1}(x) \vert -(m-1)(t-1)+ 10 \choose t- 2} \left( 1+o(1) \right). 
\end{align*}
Which implies, combining with \eqref{Probability of A m naught}, for $(1-\delta_k)c_k\le m\le (1+\delta_k)c_k$, that 
\begin{align*}
	\Prob (A_m^0 ) &\geq \Prob ( A_m)  \left[1 - \frac { \left \vert  \{ \mathcal{H} \in A_m  : \vert H_k^{1,0} (x,v) \vert < m-10 \} \right \vert}
	{ \left \vert  \{ \mathcal{H} \in A_m  : \vert H_k^{1,0} (x,v) \vert = m \} \right \vert } \right] \\
	&= \Prob ( A_m) \left[1 - \frac {(t-1)^{11} \left( n- \vert N_{k-1}(x) \vert -m(t-1) \right)! (1+o(1))} { \left( n- \vert N_{k-1}(x) \vert -m(t-1) +11 \right)!}
	\right] \\
	&> \Prob ( A_m) \left[1 - \frac {2 (t-1)^{11}} { n^{11} }
	\right],
\end{align*}
as $|N_{k-1}(x)|=o(n)$ as $n\to \infty$. Suppose 
 \[
A^0 = \bigcup\limits_{ m= (1- \delta_1 )c_k}^{ (1+ \delta_1 ) c_k} A_m^0 ,
\]
Then, as a consequence for large $n$, by \eqref{eqn:lem161} we get
\begin{align*}
	\Prob (A^0) 
	&\ge  \Prob (A) \left(1-  \frac{ 2(t-1)^{11} }{n^{11} } \right) 
	\geq \left( 1- 2 n^ { - 12} \right) \left(1-  \frac{ 2(t-1)^{11} }{n^{11} } \right).
\end{align*}    
This completes the proof of the lemma.  
\end{proof}

\begin{lemma} \label{lemma 17}
Let $t,c,d,n,p$ and  $N$ be in Assumption \ref{ass main}. Let $\delta_1,c_k,L$ be as defined in \Cref{lemma 16}, and $ v_1, v_2,\cdots,v_{\vert \Gamma_{k-1}(x)\vert} \in  \Gamma_{k-1}(x)$ . Then, given $\Omega_{k,x}$, 
\[
(1- \delta_1 )c_k-10 \leq \vert H_k^{1,0} (x,v_i) \vert \leq (1+ \delta_1 )c_k, \mbox{ for every $v_i \in \Gamma_{k-1}(x) $,}
\] 
and at least $ \vert \Gamma_{k-1}(x)\vert -10$ collections of hyperedges are disjoint from the collection of collections of hyperedges $ \{ H_k^1 (x,v_1), H_k^1 (x,v_2),$ $\cdots, H_k^1 (x,v_{\vert \Gamma_{k-1}(x)\vert}) \}$ occur with probability at least $1- 2n^{-10}$.
\end{lemma}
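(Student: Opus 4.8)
The plan is to prove the two assertions of the lemma separately and then combine them by a union bound. Throughout I condition on $\Omega_{k,x}$, so that $|N_{k-1}(x)|=o(n)$ and $|\Gamma_{k-1}(x)|\le \tfrac{3}{2}((t-1)Np)^{k-1}=o(n)$ by the defining constraints of $\Omega_{k,x}$.

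First I would establish the two-sided bound on $|H_k^{1,0}(x,v_i)|$. For each fixed $v_i\in\Gamma_{k-1}(x)$ the upper bound is immediate from $|H_k^{1,0}(x,v_i)|\le |H_k^1(x,v_i)|$ together with the Chernoff estimate \eqref{eqn:lem161} of \Cref{lemma 16}, while the lower bound is precisely \eqref{eqn:lem162}. For a given $v_i$ both inequalities can fail only with probability $O(n^{-11})$, so a union bound over all $v_i\in\Gamma_{k-1}(x)$, using $|\Gamma_{k-1}(x)|=o(n)$, yields a total failure probability of $|\Gamma_{k-1}(x)|\cdot O(n^{-11})=o(n^{-10})$. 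Hence the displayed two-sided bound holds simultaneously for every $v_i\in\Gamma_{k-1}(x)$ with probability at least $1-n^{-10}$.

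Next I would treat the disjointness statement. Call two collections $H_k^1(x,v_i)$ and $H_k^1(x,v_j)$ \emph{overlapping} if some hyperedge of one shares a vertex of $N_{k-1}^c(x)$ with some hyperedge of the other; the goal is to show that at most $10$ base vertices can carry an overlapping collection, with probability at least $1-n^{-10}$. For this I would adapt the counting scheme used in the proof of \Cref{lemma 16}, now at the level of collections. Enumerating $\Gamma_{k-1}(x)=\{v_1,\ldots,v_\ell\}$ with $\ell=|\Gamma_{k-1}(x)|$ and conditioning on the edge-counts $|H_k^1(x,v_i)|=m_i$, each lying in $[(1-\delta_1)c_k,(1+\delta_1)c_k]$ by the first part, the endpoints of these hyperedges in $N_{k-1}^c(x)$ are selected uniformly. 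I would then compare the number of configurations in which all $(t-1)\sum_i m_i$ endpoints are distinct (a descending factorial in $n-|N_{k-1}(x)|$, playing the role of $|B_m|$ from \Cref{lemma 16}) with the number of configurations in which at least $11$ distinct base vertices carry an overlap. As in \Cref{lemma 16}, each forced coincidence among the chosen endpoints costs a factor $O(n^{-1})$, so eleven such coincidences render the ratio $O(n^{-11})$; summing over the admissible count vectors $(m_1,\ldots,m_\ell)$ and using $\ell=o(n)$ absorbs the combinatorial factors and gives a bound of order $n^{-10}$.

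A final union bound between the two parts produces the stated probability $1-2n^{-10}$. The principal difficulty is the second part: whereas \Cref{lemma 16} only tracks repeated endpoints attached to a \emph{single} base vertex, here coincidences may arise between hyperedges rooted at \emph{different} base vertices, so the combinatorial count of configurations with a prescribed number of cross-collection coincidences is considerably more intricate. The delicate point will be to verify that each such coincidence contributes a factor $O(n^{-1})$ uniformly over all admissible count vectors, and that the summation over these vectors does not erode the $n^{-11}$ gain.
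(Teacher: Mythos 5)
Your proposal follows essentially the same route as the paper: part one is exactly the paper's event $\mathcal{A}$ obtained by a union bound of \Cref{lemma 16} over all $v_i\in\Gamma_{k-1}(x)$, and part two mirrors the paper's comparison of $\vert \mathcal{A}_{\bf m}\cap(\mathcal{A}_{\bf m}^0)^c\vert$ against the all-disjoint count $\vert\mathcal{B}_{\bf m}\vert$, with each forced coincidence replacing a $\binom{\cdot}{t-1}$ by a $\binom{\cdot}{t-2}$ and thus costing a factor of order $n^{-1}$, so that eleven coincidences yield the $n^{-11}$ gain before summing over ${\bf m}$. The cross-collection counting you flag as delicate is handled in the paper by exactly this explicit product formula, so your outline is correct and matches the paper's argument.
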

\begin{proof} 
Let $\mathcal{A} $ denote the set of hypergraphs for which the following hold:
\[
\left( 1- \delta_1 \right)c_k \leq \vert  H_k^1 (x,v_i) \vert \leq \left( 1+ \delta_k^1 \right)c_k \mbox{ and } \vert H_k^{1,0} (x,v_i) \vert \geq (1- \delta_1 )c_k -10 , \forall v_i \in \Gamma_{k-1}(x).
\]
Then by \Cref{lemma 16}, given $\Omega_{ k,x}$, for large $n$ we have 
\begin{align}\label{eqn:lem171}
\Prob ( \mathcal{A}) \geq  1- \vert \Gamma_{k-1} (x)  \vert \frac{ 4(t-1)^{11} }{n^{11} } \geq 1 - \frac{ 1 }{n^{10}}.
\end{align}
Let $a=(1-\delta_1)(n-|N_{k-1}(x)|)p$ and  $b=(1+\delta_1)(n-|N_{k-1}(x)|)p$. Suppose $\ell=|\Gamma_{k-1}(x)|$ and ${\bf m}=(m_1,\ldots, m_{\ell})$ where $m_1,\ldots, m_\ell\in \N$. Suppose
\[
\mathcal{A}_{\bf m} = \left \{ \mathcal{H} \in \mathscr{H} [n]: \vert  H_k^1 (x,v_i) \vert =m_i \text{ and } \vert H_k^{1,0} (x,v_i) \vert \geq m_i -10 , \forall v_i \in \Gamma_{k-1}(x) \right \} .
\]
Since the events $\mathcal{A}_{\bf m}$ are disjoint, $\mathcal{A} $ is the disjoint union of $\mathcal{A}_{\bf m}$.
Thus, 
\[
\Prob \left( \mathcal{A} \right) = \sum\limits_ {\bf m\in [a,b]^\ell} \Prob \left( \mathcal{A}_{\bf m} \right).
\]
We define $ \vert H_k^{1,0} (x) \vert$, $\mathcal{A}_{\bf m}^0 $ and $B_{\bf m}$ as follows:
\begin{align*}
\vert H_k^{1,0} (x) \vert &= \sup \{ i: \exists i \text { pairwise disjoint sets among } H_k^1 (x,v_1),\ldots , H_k^1 (x,v_l )\},
\\\mathcal{A}_m^0 &= \{ \mathcal{H} \in \mathcal{A}_{\bf m}: \vert H_k^{1,0} (x) \vert \geq \vert \Gamma_{k-1}(x) \vert - 10 \}
\\ \mathcal {B} _{\bf m} &= \{ \mathcal{H} \in \mathcal{A}_{\bf m} \suchthat \vert  H_k^1 (x,v_i) \vert = m_i , \vert H_k^{1,0} (x,v_i) \vert = m_i, \forall v_i \in \Gamma_{k-1}(x) \}.
\end{align*}
Next, given $\Omega_{k,x}$, we calculate the following probability.
\begin{align*}
	\Prob \left( \mathcal{A}_{\bf m}^0 \right) = \Prob \left( \mathcal{A}_{\bf m} \right) \left[1 - \frac {\vert \mathcal{A}_{\bf m} \cap (A_{\bf m}^0)^c \vert} {\vert \mathcal{A}_{\bf m} \vert} \right] 
	\geq \Prob \left( \mathcal{A}_{\bf m} \right) \left[1 - \frac {\vert \mathcal{A}_{\bf m} \cap (\mathcal {A}_{\bf m}^0)^c \vert} {\vert \mathcal {B}_{\bf m} \vert} \right] 
\end{align*}
The last inequality follows because $\mathcal {B}_{\bf m} \subset \mathcal{A}_{\bf m}$. Note that each hypergraph in $\mathcal {B}_{\bf m}$ contains $m_1 + \cdots +m_l$ disjoint hyperedges in $H_k^1 (x)$ (in this case, disjoint means at most they have one element common which is from $\Gamma_{k-1}(x) $). Therefore we have
\begin{align*}
	\vert \mathcal {B}_{\bf m}  \vert = { n- \vert N_{k-1}(x) \vert\choose t- 1} \cdots { n- \vert N_{k-1}(x) \vert -( m_1 + \cdots +m_l -1)(t-1) \choose t- 1} .
\end{align*}
Observe that $\vert \mathcal{A}_{\bf m} \cap ( \mathcal {A}_{\bf m}^0)^c \vert 
= \left \vert \{ \mathcal{H} \in \mathcal{A}_{\bf m}  : \vert H_k^{1,0} (x) \vert < \vert \Gamma_{k-1}(x) \vert - 10 \}  \right \vert$ and $ B_{\bf m} \cap (A_{\bf m}^0)^c \subseteq \mathcal{A}_{\bf m} \cap (\mathcal {A}_{\bf m}^0)^c $. Also, $ \left\vert B_{\bf m} \cap (A_{\bf m}^0)^c \right\vert$ contributes the highest order term in $ \left\vert \mathcal{A}_{\bf m} \cap (\mathcal {A}_{\bf m}^0)^c \right\vert $. It is clear that $\mathcal{A}_{\bf m} \cap (\mathcal {A}_{\bf m}^0)^c$ has at least $11$ collections of hyperedges which have intersection. When each of those $11$ collections of hyperedges have one common vertex, either among themselves or with disjoint collections, we obtain highest order term in the following cardinality. Therefore,
\begin{align*}
	&\vert \mathcal{A}_{\bf m} \cap (\mathcal {A}_{\bf m}^0)^c \vert \\
	=& \left\vert \left \{ \mathcal{H} : \vert  H_k^1 (x,v_i) \vert =m_i , \vert H_k^{1,0} (x,v_i) \vert \geq m_i -10 , \forall v_i \in \Gamma_{k-1}(x), \vert H_k^{1,0} (x) \vert \leq \vert \Gamma_{k-1}(x) \vert - 11  \right \} \right \vert \\
	=& { n- \vert N_{k-1}(x) \vert\choose t- 1}  \cdots
	 { n- \vert N_{k-1}(x) \vert -(m_1 + \cdots +m_l - 12)(t-1) \choose t- 1} \\
	& { n- \vert N_{k-1}(x) \vert -( m_1 + \cdots +m_l - 11)(t-1) \choose t- 2} \cdots 
	\\&{ n- \vert N_{k-1}(x) \vert -(m_1 + \cdots +m_l - 1)(t-1)+10 \choose t- 2} (1+o(1)) .
\end{align*}
Thus from the above we obtain, for large $n$,
\begin{align*}
	\Prob \left( \mathcal{A}_{\bf m}^0 \right) &\geq \Prob \left( \mathcal{A}_{\bf m} \right) \left[1 - \frac {(t-1)^ {11} \left( n- \vert N_{k-1}(x) \vert - (  m_1 + \cdots +m_l)(t-1) \right)! (1+o(1))}
	{ \left( n- \vert N_{k-1}(x) \vert - (  m_1 + \cdots +m_l)(t-1) +11 \right)! }
	\right] \\
	&\geq \Prob \left( \mathcal{A}_{\bf m} \right) \left [1 - \frac {(t-1)^ {11} (1+o(1))}
	{ \left( n- \vert N_{k-1}(x) \vert - (  m_1 + \cdots +m_l) (t-1) \right)^ {11} }
	\right],
	\\& \geq \Prob \left( \mathcal{A}_{\bf m} \right) \left (1 - \frac {2 (t-1)^ {11}}
	{ n^ {11} }
	\right).
\end{align*}
Let $ \mathcal{A}^0  =
\bigcup\limits_ {\bf m\in [a,b]^\ell} \mathcal{A}_{\bf m}^0 $. Then,  by similar arguments as in the proof of \Cref{lemma 16}, 
\begin{align*}
	\Prob \left( \mathcal{A}^0 \right) 
	&  \geq \Prob \left( \mathcal{A} \right) \left (1 - \frac {2 (t-1)^ {11}}
	{ n^ {11} }.
	\right)
\end{align*}
Which gives the result using \eqref{eqn:lem171}. 
\end{proof}
\begin{lemma} \label{lemma 18}
Let $t,c,d,n,p$ and  $N$ be in Assumption \ref{ass main}. Let $\delta_1$ and $c_k$ be as defined in \Cref{lemma 16}. Define,  for $1 \leq k \leq d-1$,
\[ \epsilon_k = 2\delta_1
\]
Then,  given $\Omega_{k,x}$, for a fixed vertex $x$,
\[
\left\vert \vert \Gamma_{k}(x) \vert - (t-1) \vert \Gamma_{k-1}(x) \vert N p \right\vert \leq \epsilon_k (t-1) \vert \Gamma_{k-1}(x) \vert N p,
\]
holds with probability at least $1-3n^{-10}$.
\end{lemma}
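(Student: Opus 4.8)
The plan is to establish the two one-sided bounds on $|\Gamma_k(x)|$ separately and combine them by a union bound, mirroring the structure of the proof of \Cref{lemma 4} in the graph case but now feeding in the hyperedge decompositions of \Cref{lemma 14}--\Cref{lemma 17}. Throughout I would work conditionally on $\Omega_{k,x}$, so that $|\Gamma_{k-1}(x)|=o(n)$, $|N_{k-1}(x)|=o(n)$ (by \eqref{hyp gamma_k-1} and \eqref{hyp N_k-1}), and hence $c_k=\binom{n-|N_{k-1}(x)|}{t-1}p=Np(1+o(1))$, with $(t-1)Np\to\infty$. The key reduction is that every vertex of $\Gamma_k(x)$ lies in $N_{k-1}^c(x)$ and is covered by some hyperedge of $H_k(x)$, so the whole lemma amounts to counting how many \emph{distinct} new vertices the hyperedges of $H_k(x)$ create.

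For the upper bound, a hyperedge $e\in H_k(x)$ with $|e\cap\Gamma_{k-1}(x)|=m$ contributes at most $t-m\le t-1$ new vertices, so $|\Gamma_k(x)|\le (t-1)|H_k^1(x)|+\sum_{m\ge 2}(t-m)|H_k^{(m)}(x)|$, where $H_k^{(m)}(x)$ denotes the hyperedges meeting $\Gamma_{k-1}(x)$ in exactly $m$ vertices. The terms with $m\ge 2$ have expectation of order $|\Gamma_{k-1}(x)|/n$ times the main term $|\Gamma_{k-1}(x)|Np$, hence are $o(|\Gamma_{k-1}(x)|Np)$ since $|\Gamma_{k-1}(x)|=o(n)$. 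Because $|H_k^1(x)|=\sum_{v\in\Gamma_{k-1}(x)}|H_k^1(x,v)|$, I would apply the per-vertex concentration \eqref{eqn:lem161} of \Cref{lemma 16}, namely $\Prob(|H_k^1(x,v)|>(1+\delta_1)c_k\mid \Omega_{k,x})\le 2n^{-12}$, together with a union bound over the $o(n)$ vertices of $\Gamma_{k-1}(x)$, to get $|H_k^1(x)|\le (1+\delta_1)|\Gamma_{k-1}(x)|c_k$ with probability at least $1-n^{-10}$. Combined with $c_k=Np(1+o(1))$ and the lower-order $m\ge 2$ terms, and using $\epsilon_k=2\delta_1$ with $\delta_1\to 0$, this yields $|\Gamma_k(x)|\le (1+\epsilon_k)(t-1)|\Gamma_{k-1}(x)|Np$ with probability at least $1-n^{-10}$.

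The lower bound is the main obstacle, and this is where the disjointness bookkeeping of \Cref{lemma 17} does the real work. I would invoke \Cref{lemma 17} to obtain, with probability at least $1-2n^{-10}$, that at least $|\Gamma_{k-1}(x)|-10$ of the collections $H_k^1(x,v_i)$ are pairwise disjoint and that each contains at least $(1-\delta_1)c_k-10$ pairwise disjoint hyperedges (the quantity $|H_k^{1,0}(x,v_i)|$). Disjoint hyperedges (within one collection they share only $v_i$, across disjoint collections they share nothing) produce pairwise distinct vertices of $N_{k-1}^c(x)$, each hyperedge giving $t-1$ vertices of $\Gamma_k(x)$, so
\[
|\Gamma_k(x)|\ge (t-1)\,(|\Gamma_{k-1}(x)|-10)\,((1-\delta_1)c_k-10).
\]
For $k\ge 2$ the bound $|\Gamma_{k-1}(x)|\ge \tfrac12((t-1)Np)^{k-1}$ makes $|\Gamma_{k-1}(x)|-10=(1-o(1))|\Gamma_{k-1}(x)|$, while for $k=1$ one uses the single star $H_1^1(x,x)$ directly with $|\Gamma_0(x)|=1$. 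Using $c_k=Np(1+o(1))$ and $Np\to\infty$ to absorb the additive $-10$, this gives $|\Gamma_k(x)|\ge (1-\epsilon_k)(t-1)|\Gamma_{k-1}(x)|Np$ with probability at least $1-2n^{-10}$.

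Finally, a union bound over the failure events of the two bounds shows that the stated inequality fails with probability at most $n^{-10}+2n^{-10}=3n^{-10}$, as claimed. The only delicate points are (i) checking that the additive $-10$ corrections and the multiplicative $(1+o(1))$ factors coming from $c_k\approx Np$ and from $|\Gamma_{k-1}(x)|-10\approx|\Gamma_{k-1}(x)|$ all fit inside the gap between $\delta_1$ and $\epsilon_k=2\delta_1$ for large $n$, and (ii) confirming that the $m\ge 2$ hyperedge contributions in the upper bound are genuinely of lower order; both follow from $|\Gamma_{k-1}(x)|,|N_{k-1}(x)|=o(n)$ and $(t-1)Np\to\infty$ under \Cref{ass main}.
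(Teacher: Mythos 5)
Your proposal matches the paper's proof in substance: the lower bound is obtained exactly as in the paper from \Cref{lemma 17} via the count $(t-1)\left(\vert\Gamma_{k-1}(x)\vert-10\right)\left((1-\delta_1)c_k-10\right)$, the upper bound comes from concentration of the hyperedge counts with the $m=1$ class dominating, and the two are combined by a union bound to give $3n^{-10}$. The only deviation is that the paper takes the upper bound directly from \Cref{lemma 14}, which concentrates the full count $\vert H_k(x)\vert$ over all $m$ at once, whereas you rebuild it from the per-vertex bounds of \Cref{lemma 16}; that works, but it leaves the $m\ge 2$ contribution needing its own high-probability (Chernoff, not merely expectation) estimate, a routine step that \Cref{lemma 14} already packages.
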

\begin{proof} [Proof of \cref{lemma 18}]
Given $\Omega_{k,x}$, from \cref{lemma 14} we have 
\begin{align*}
	\vert \Gamma_k(x)  \vert &\leq (1+ \delta_k) \sum_{m=1}^{t-1} (t-m) {\vert \Gamma_{k-1}(x) \vert \choose m} { n- \vert N_{k-1}(x) \vert\choose t- m} p \\
	&\leq ( 1+ \delta_1 ) (t-1) \vert \Gamma_{k-1}(x) \vert { n- \vert N_{k-1}(x) \vert\choose t- 1} p (1+o(1))\\
	& \leq ( 1+ \epsilon_k ) (t-1) \vert \Gamma_{k-1}(x) \vert N p,
\end{align*}
with probability at least $1-n^{-10}$. On the other hand, from \Cref{lemma 17} we have
\begin{align*}
	\vert \Gamma_{k} (x) \vert &\geq (t-1)  \left( \vert \Gamma_{k-1} (x) \vert -10 \right) \left((1- \delta_1 )c_k -10 \right) \\
	& \ge (1 - \epsilon_k ) (t-1) \vert \Gamma_{k-1}(x) \vert N p ,
\end{align*}
with probability at least $1- 2n^{-10}$. Hence the result.
\end{proof}

\begin{lemma} \label{lemma 19}
Let $t,c,d,n,p$ and  $N$ be in Assumption \ref{ass main}.
Set
\[
\eta_k: = \exp \left( \sum\limits_{l=1}^{k} \epsilon_l \right) -1,
\]
where $\epsilon_1,\ldots, \epsilon_k$ are as defined in \Cref{lemma 18}. Define, for $1\le k\le d-1$, 
\begin{equation} \label{bounds of gamma l for all l in hyp}
	\Omega_{k,x}^*:=	\{\mathcal H\in \mathscr H[n]\suchthat \left\vert \vert \Gamma_{l}(x)\vert -  (t-1)^{l} N^l p^l  \right\vert \leq \eta_l (t-1)^{l} N^l p^l ,\forall \; 1 \leq l \leq k\} ,
\end{equation}
Then we have $\Prob(\Omega_{k,x}^*)\ge 1- 3kn^{-10}$  for large $n$.

\end{lemma}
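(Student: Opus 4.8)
The plan is to mirror the inductive argument for the graph case in \Cref{lemma 6}, with $np$ replaced by $(t-1)Np$ and \Cref{lemma 18} playing the role of \Cref{lemma 4}. Throughout write $\mu_l:=(t-1)^lN^lp^l$, so that $\mu_l=(t-1)Np\,\mu_{l-1}$ and $\Omega_{k,x}^*$ is exactly the event that $\big|\,|\Gamma_l(x)|-\mu_l\,\big|\le\eta_l\mu_l$ for every $1\le l\le k$. Let $G_k$ denote the high-probability event furnished by \Cref{lemma 18}, namely $\big\{(1-\epsilon_k)(t-1)|\Gamma_{k-1}(x)|Np\le|\Gamma_k(x)|\le(1+\epsilon_k)(t-1)|\Gamma_{k-1}(x)|Np\big\}$.

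First I would record the containments $\Omega_{k,x}^*\subseteq\Omega_{k-1,x}^*\subseteq\Omega_{k,x}$. The first is immediate, as $\Omega_{k,x}^*$ merely adds the constraint at level $k$. For the second, on $\Omega_{k-1,x}^*$ the bound $\big|\,|\Gamma_{k-1}(x)|-\mu_{k-1}\,\big|\le\eta_{k-1}\mu_{k-1}$ with $\eta_{k-1}\to0$ forces \eqref{hyp gamma_k-1} for large $n$, while $|N_{k-1}(x)|=\sum_{l=0}^{k-1}|\Gamma_l(x)|\le\sum_{l=0}^{k-1}(1+\eta_l)\mu_l$ is dominated by its top term because $(t-1)Np\to\infty$, giving \eqref{hyp N_k-1}. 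This containment is precisely what lets me condition on $\Omega_{k,x}$ when invoking \Cref{lemma 18}.

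The heart of the proof is the one-step estimate. Since $\Omega_{k,x}^*=\Omega_{k-1,x}^*\cap\big\{\,\big|\,|\Gamma_k(x)|-\mu_k\,\big|\le\eta_k\mu_k\,\big\}$, I would show $\Omega_{k-1,x}^*\cap G_k\subseteq\Omega_{k,x}^*$. On this intersection, combining the two-sided bounds of $G_k$ with $(1-\eta_{k-1})\mu_{k-1}\le|\Gamma_{k-1}(x)|\le(1+\eta_{k-1})\mu_{k-1}$ and $\mu_k=(t-1)Np\,\mu_{k-1}$ yields $(1-\epsilon_k)(1-\eta_{k-1})\mu_k\le|\Gamma_k(x)|\le(1+\epsilon_k)(1+\eta_{k-1})\mu_k$. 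The exponential choice $1+\eta_k=e^{\epsilon_k}(1+\eta_{k-1})$ together with $1+\epsilon_k\le e^{\epsilon_k}$ gives $(1+\epsilon_k)(1+\eta_{k-1})\le1+\eta_k$, and $\eta_k-\eta_{k-1}=e^{\sum_{l=1}^{k-1}\epsilon_l}(e^{\epsilon_k}-1)\ge\epsilon_k$ gives $(1-\epsilon_k)(1-\eta_{k-1})\ge1-\epsilon_k-\eta_{k-1}\ge1-\eta_k$. Hence $|\Gamma_k(x)|$ meets the level-$k$ constraint, so $\Omega_{k-1,x}^*\cap G_k\subseteq\Omega_{k,x}^*$. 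Taking complements, $(\Omega_{k,x}^*)^c\subseteq(\Omega_{k-1,x}^*)^c\cup(G_k^c\cap\Omega_{k-1,x}^*)$, and since $\Omega_{k-1,x}^*\subseteq\Omega_{k,x}$,
\[
\Prob\big((\Omega_{k,x}^*)^c\big)\le\Prob\big((\Omega_{k-1,x}^*)^c\big)+\Prob\big(G_k^c\cap\Omega_{k-1,x}^*\big)\le\Prob\big((\Omega_{k-1,x}^*)^c\big)+3n^{-10},
\]
the last step by \Cref{lemma 18}.

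Finally I would treat the base case $k=1$, where $|\Gamma_1(x)|$ concentrates around $\mu_1=(t-1)Np$ and a direct Chernoff bound gives $\Prob\big((\Omega_{1,x}^*)^c\big)\le3n^{-10}$ for large $n$, exactly as in \Cref{lemma 6}. Unfolding the recursion then yields $\Prob\big((\Omega_{k,x}^*)^c\big)\le3kn^{-10}$, which is the claim. The only genuinely delicate part is the error bookkeeping: the exponential definition of $\eta_k$ is engineered so that the per-step multiplicative errors $(1\pm\epsilon_k)(1\pm\eta_{k-1})$ telescope into $(1\pm\eta_k)$, and verifying this compatibility is the crux, shared with the graph argument.
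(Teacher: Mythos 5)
Your proposal is correct and follows essentially the same inductive route as the paper: the containments $\Omega_{k,x}^*\subseteq\Omega_{k-1,x}^*\subseteq\Omega_{k,x}$, the one-step concentration from \Cref{lemma 18}, and the telescoping of errors via the exponential definition of $\eta_k$ (the paper phrases the induction step contrapositively through the triangle inequality on the complement, whereas you use the direct inclusion $\Omega_{k-1,x}^*\cap G_k\subseteq\Omega_{k,x}^*$ with the multiplicative bounds $(1\pm\epsilon_k)(1\pm\eta_{k-1})$, but the bookkeeping is identical). The only gloss is the base case: in the hypergraph setting $|\Gamma_1(x)|$ is not itself binomial, so ``a direct Chernoff bound'' really means Chernoff applied to $|H_1(x)|\sim \mathrm{Bin}(N,p)$ combined with the disjointness estimate of \Cref{lemma 17} (equivalently, \Cref{lemma 18} at $k=1$) to obtain the lower bound, which is exactly what the paper does.
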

\begin{proof} [Proof of \cref{lemma 19}]
Recall, $\Omega_{k,x}$ in \eqref{def Omega k,x for hyp}, and it is clear that $ \Omega_{k,x}^* \subseteq \Omega_{k-1, x}^* \subseteq \Omega_{k,x} $. 
We show the result by a recursive relation. We first verify for $k=1$.

For $k=1$, $|N_{k-1}(x)|=1$ and the random variable $|H_1(x)|$ has binomial distribution with parameters $N$ and $p$. Therefore, by the  Chernoff's bound we get
\begin{align}\label{eqn:H1up}
\Prob(||H_1(x)|-Np|> \delta_1 Np)\le \frac{1}{n^{10}}, \mbox{  for large $n$}.
\end{align}
On the other hand \Cref{lemma 17} implies that 
\begin{align}\label{eqn:H1lower}
	\Prob(|H_1(x)|\ge (1-\delta_1)c_1 -10)\ge 1- \frac{2}{n^{10}}.
\end{align}
Note that $c_1=Np$. As each hyperedge contributes at most $(t-1)$ points to $|\Gamma_1(x)|$,  by \eqref{eqn:H1up} and \eqref{eqn:H1lower} it follows that 
\begin{align}\label{eqn:k=1}
	||\Gamma_1(x)|-(t-1)Np|\le \eta_1(t-1)Np
\end{align}
holds with probability at least $1-3n^{-10}$. For $k\ge 2$, we have the following relation
\begin{equation} \label{eq 16}
\Omega_{k,x}^* =\Omega_{k-1 , x}^* \backslash  \left \{ \left\vert \vert \Gamma_{k}(x)\vert - \{ (t-1)Np \}^k \right\vert \geq \eta_k \{ (t-1)Np \}^k , \Omega_{k-1, x}^* \right \}
\end{equation}
On the other hand, given $\Omega_{k-1}^* $, we have 
\begin{align*}
	 \left\vert  (t-1) \vert \Gamma_{k-1}(x) \vert N p-( (t-1)Np )^k  \right\vert 
	&\leq \eta_{k-1}( (t-1)Np )^k.
\end{align*}
Therefore, given $\Omega_{k-1,x}^* $, by the triangle inequality we have
\begin{align*}
	\left\vert \vert \Gamma_{k}(x)\vert - (t-1) \vert \Gamma_{k-1}(x) \vert N p \right\vert + &
	\left\vert ( (t-1)Np )^k  - (t-1) \vert \Gamma_{k-1}(x) \vert N p \right\vert \\
	&\geq \left\vert \vert \Gamma_{k}(x)\vert - ((t-1)Np )^k  \right\vert ,
\end{align*}
Which implies that if
\(
\left\vert \vert \Gamma_{k}(x)\vert - ( (t-1)Np )^k \right\vert \geq \eta_k ( (t-1)Np )^k 
\)
then
\begin{align*}
	\left\vert \vert \Gamma_{k}(x)\vert - (t-1) \vert \Gamma_{k-1}(x) \vert N p \right\vert 
	& \geq \left( \eta_k - \eta_{k-1} \right)
	\{ (t-1)Np \}^k \\
	&= \left[ \exp \left( \sum\limits_{l=1}^{k} \epsilon_l \right) - \exp \left( \sum\limits_{l=1}^{k-1} \epsilon_l \right) \right]
	\{ (t-1)Np \}^k \\
	&\ge \epsilon_k( (t-1)Np )^k, 
\end{align*}
Consequently by \Cref{lemma 18}, given $\Omega_{k,x}$, we have 
\begin{align*}
	&\Prob \left( \left\vert \vert \Gamma_{k}(x)\vert - \left( (t-1)Np \right)^k \right\vert \geq \eta_k \left( (t-1)Np \right)^k , \Omega_{k-1, x}^* \right) 
\\	 & \leq 
	\Prob ( \left\vert \vert \Gamma_{k}(x)\vert - (t-1) \vert \Gamma_{k-1}(x) \vert N p  \right\vert \geq \epsilon_k \left( (t-1)Np \right)^k, \Omega_{k-1, x}^* )
	\leq 
	\frac{3}{n^{10}}.
\end{align*}
It follows from \eqref{eq 16} and the last inequality that
\begin{align*}
	\Prob \left( ( \Omega_{k,x}^* )^c \right) \leq \Prob \left( ( \Omega_{k-1, x}^*)^c \right) +\frac{3}{n^{10}}.
\end{align*}
Thus, the last equation and  \eqref{eqn:k=1} gives the result.
\end{proof}

\begin{proof} [Proof of \cref{X alpha=1 in hypergraph}]
	Let $H_k(x,y)$ denote the set of all hyperedges which belong to $ H_k(x)$ and contain the vertex $y$, that is,
	\(
	H_k(x,y) = \left\{ e \in H_k(x): y \in e \right\}.
	\)
	Then we have
	\[
	\vert H_k(x,y) \vert = \sum_{m=1}^{t-1} {\vert \Gamma_{k-1}(x) \vert \choose m} { n-1- \vert N_{k-1}(x) \vert\choose t-1- m} p.
	\]
	We take $ k=d$, then given $\Omega_{d-1,x}^*$, we obtain
	\begin{align*}
		\vert H_d(x,y) \vert &= \sum_{m=1}^{t-1} {\vert \Gamma_{d-1}(x) \vert  \choose m} { n-1- \vert N_{d-1}(x) \vert\choose t-1- m}= \frac{(t-1)N\vert \Gamma_{d-1}(x) \vert }{n} (1-o(1)),
	\end{align*}
	as $n\to \infty$. Set 
	\begin{align} \label{definition a,b hyp}
		\begin{split}
		a''&= (1-\eta_{d-1} ) \frac{(t-1)^{d} N^{d} p^{d-1}} {n} (1- o(1)) \text{ and }\\
		b''&= (1+ \eta_{d-1}) \frac{(t-1)^{d} N^{d} p^{d-1} } {n} (1+ o(1)).
		\end{split}
			\end{align}
From the definition of $\Omega_{d-1,x}^*$ in \Cref{lemma 19}, it is clear that
\begin{align} \label{Omega subset Hd}
	\Omega_{d-1,x}^* \subseteq \{ a'' \leq \vert H_d(x,y) \vert \leq b''\}
\end{align}	
Note that similar to \eqref{upperbound of Xalpha} and \eqref{lowerbound of Xalpha}, for $a'',b''$ we have 
\begin{align}
		\Prob \left( X_ \alpha =1 \right) 
		\leq &\sum\limits_ {m\in [a'',b''] } \Prob \left( X_ \alpha =1 \mid \vert H_d(x,y) \vert =m \right) \Prob \left( \vert H_d(x,y) \vert =m \right) \nonumber
\\		&+ \sum\limits_ {m\in [a'',b'']^c } \Prob \left( \vert H_d(x,y) \vert =m    \right) , \label{upperbound of Xalpha hyp}
\\	   \Prob \left( X_ \alpha =1 \right) 
	   \geq & \sum\limits_ {m\in [a'',b''] } \Prob \left( X_ \alpha =1 \mid \vert H_d(x,y) \vert =m \right) \Prob \left( \vert H_d(x,y) \vert =m \right). 
	\label{lowerbound of Xalpha hyp}
\end{align}	
Observe that $	\Prob \left( X_ \alpha =1 \mid \vert H_d(x,y) \vert =m \right) = \Prob \left( d_\mathcal{H} (x,y)>d \text{ in } \mathcal{H} \mid \vert H_d(x,y) \vert =m \right)$, and the distance between the vertices $x$ and $y$ is greater than $d$ if and only if $y$ is not in any hyperedge of $H_d(x,y)$. Therefore
\begin{align} \label{Xalpha given Hd}
	\Prob \left( X_ \alpha =1 \mid \vert H_d(x,y) \vert =m \right) = (1-p)^ { m } . 
\end{align}
Using \eqref{Xalpha given Hd}, \eqref{Omega subset Hd} in \eqref{upperbound of Xalpha hyp} and \eqref{lowerbound of Xalpha hyp}, we obtain
\begin{align} \label{bounds of Xalpha in hyp}
	(1-p)^ { b'' } \Prob \left( \Omega_{d-1,x}^* \right)\leq \Prob \left( X_ \alpha =1 \right) \leq (1-p)^ { a''} + \Prob \left( ( \Omega_{d-1,x}^*) ^c \right)
\end{align}
Also, from \Cref{lemma 19} we have
\begin{align} \label{prob Omega(d-1)}
	\Prob(\Omega_{d-1,x}^*)\ge 1- \frac {3(d-1)} {n^{10}}.
\end{align}
Using the inequality $ e^{- p(p+1)} \leq (1-p) \leq e^{-p} $, the values of $a''$ and $b''$, together with \eqref{prob Omega(d-1)} and Assumption~\ref{ass main} in \eqref{bounds of Xalpha in hyp}
 we obtain
\begin{align*}  
	 \Prob \left( X_ \alpha =1 \right) \leq & \left(\frac{c}{n^2} \right)^ { (1- \eta_{d-1}) (1-o(1)) } +
	\frac{ 3(d-1) }{n^{10} },
\\	\Prob \left( X_ \alpha =1 \right) \geq 
    & \left( \frac{c}{n^2} \right)^ { (1+ \eta_{d-1} ) (p+1) (1+ o(1))}  \left(1 - \frac{ 3(d-1) }{n^{10} } \right).
\end{align*}
Which give the result as $p\log n\to 0$ and $\eta_{d-1}\log n\to 0$ as $n\to \infty$.
\end{proof}

\subsection{Proof of \cref{X alpha=1 beta=1 in hypergraph}} In this subsection, we present the proof of \cref{X alpha=1 beta=1 in hypergraph} arranged in the same manner as the proof of \cref{X alpha=1 in hypergraph}.
\begin{lemma} \label{lemma 21}
	Let $\Omega_{ k,x}^*$ be as defined in \eqref{bounds of gamma l for all l in hyp} and  $\Omega _{k,x,z}^*=\Omega_{k,x}^*\cap \Omega_{ k,z}^*$ for some two vertices $x$ and $z$. 
Then, for $1\le k\le d-1$, for large $n$ we have 
\[
\Prob \left( \Omega _{k,x,z}^* \right) \geq 1- 6k{n^{-10} }.
\]
\end{lemma}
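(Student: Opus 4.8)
The plan is to reduce this to a one-line union bound, exactly mirroring the proof of \Cref{lemma 8} in the graph setting. The statement concerns the intersection $\Omega_{k,x,z}^* = \Omega_{k,x}^* \cap \Omega_{k,z}^*$, so the natural move is to pass to complements and use subadditivity: since $(\Omega_{k,x,z}^*)^c = (\Omega_{k,x}^*)^c \cup (\Omega_{k,z}^*)^c$, we have
\[
\Prob\left( (\Omega_{k,x,z}^*)^c \right) \le \Prob\left( (\Omega_{k,x}^*)^c \right) + \Prob\left( (\Omega_{k,z}^*)^c \right).
\]
The whole argument then rests on controlling each term on the right.

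The two individual bounds are supplied directly by \Cref{lemma 19}, which gives $\Prob(\Omega_{k,x}^*) \ge 1 - 3k n^{-10}$ for large $n$, and the same bound holds for $\Omega_{k,z}^*$ by symmetry (the vertex $x$ played no distinguished role in that lemma, so replacing $x$ by $z$ changes nothing). Hence $\Prob((\Omega_{k,x}^*)^c) \le 3k/n^{10}$ and likewise for $z$. Substituting these into the union bound yields
\[
\Prob\left( (\Omega_{k,x,z}^*)^c \right) \le \frac{3k}{n^{10}} + \frac{3k}{n^{10}} = \frac{6k}{n^{10}},
\]
and taking complements once more gives the claimed $\Prob(\Omega_{k,x,z}^*) \ge 1 - 6k n^{-10}$ for large $n$, valid for each $1 \le k \le d-1$.

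Honestly, there is no real obstacle in this lemma: all the genuine probabilistic content — the Chernoff estimates on $|H_k(x)|$, the disjointness counting for the lower bound on $|\Gamma_k(x)|$, and the recursive control of $\Omega_{k,x}^*$ — has already been absorbed into \Cref{lemma 14} through \Cref{lemma 19}. The only point worth a moment's care is confirming that \Cref{lemma 19} applies to $\Omega_{k,z}^*$ verbatim, i.e.\ that its conclusion is uniform over the choice of center vertex; this is immediate from the proof, which never uses any property of $x$ beyond its being a fixed vertex. I would therefore present this as a short corollary-style proof, identical in form to \Cref{lemma 8}, and note explicitly that it is the hypergraph analogue of that result.
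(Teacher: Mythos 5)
Your proof is correct and coincides with the paper's own argument: both pass to complements, apply the union bound, and invoke \Cref{lemma 19} twice (once for $x$, once for $z$) to obtain $6k n^{-10}$. Your remark that \Cref{lemma 19} applies verbatim with $z$ in place of $x$ is a fair point of care, though the paper takes it for granted.
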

\begin{proof} [Proof of \cref{lemma 21}]
Suppose $\Omega _{k,x,z}^*=\Omega _{k,x}^* \cap \Omega _{k,z}^* $. Then \Cref{lemma 19} implies that 
\[ \Prob \left( (\Omega _{k,x,z}^*)^c \right) \leq \Prob \left( (\Omega _{k,x}^* )^c \right) + \Prob \left( (\Omega _{k,z}^* )^c \right) 
\leq \frac{ 6k }{n^{10} } .
\]
Hence the result.
\end{proof}

\begin{proof} [Proof of \cref{X alpha=1 beta=1 in hypergraph}]
	Let $\alpha ,\beta \in I$ be such that $\alpha = (x,y)$, $\beta = (z,w)$ with $\alpha\neq \beta$. Next, we define two sets $H_d(x,y)$ and $H_d(z,w)$ as follows:
	\begin{align*}
		H_d(x,y) = \{ e \in H_d(x) : y\in e \} \mbox{ and }
		H_d(z,w) &= \{ e \in H_d(z) :   w\in e \}.
	\end{align*}
Let $m \in \mathbb{N}$, then observe that
\begin{align} \label{Xalpha Xbeta given hd}
	\Prob \left( X_ \alpha =1,X_\beta = 1\mid \vert H_d(x,y) \cup H_d(z,w) \vert =m \right) = \left(1-p\right)^m.
\end{align}	
Similar to \eqref{upperbound of Xalpha} and \eqref{lowerbound of Xalpha}, for any $a,b$ we have
	\begin{align}
			\Prob \left( X_ \alpha =1,X_\beta = 1 \right) 
			\leq & \sum\limits_ {m\in [a,b] } \left(1-p\right)^m \Prob \left( \left\vert  H_d(x,y) \cup H_d(z,w) \right\vert =m \right) \nonumber
	\\	&	+ \sum\limits_ {m\in [a,b]^c } \Prob \left( \vert H_d(x,y) \cup H_d(z,w) \vert =m \right) , \label{upperbound of Xalpha Xbeta hyp}
		\\
		\Prob \left( X_ \alpha =1,X_\beta = 1 \right) 
		\geq & \sum\limits_ {m \in [a,b] } \left(1-p\right)^m \Prob \left( \vert H_d(x,y) \cup H_d(z,w) \vert =m \right) \label{lowerbound of Xalpha Xbeta hyp}
	\end{align}	 
We calculate the required probability by estimating $ \vert H_d(x,y) \cup H_d(z,w) \vert $. To do so, we will consider two cases.

\vspace{.1cm}
\noindent \underline {\bf Case-I :}  Suppose $\{x,y\}\cap \{w,z\}=\emptyset$.
Observe that, given $|\Gamma_{d-1}(x)|$ and $|N_{d-1}(x)|$, we have 
\begin{align*}
	\vert  H_d(x,y) \vert &= \sum_{m=1}^{t-1} {\vert \Gamma_{d-1}(x) \vert \choose m} { n-1- \vert N_{d-1}(x) \vert\choose t-1- m} 
	\\\vert  H_d(z,w) \vert &= \sum_{m=1}^{t-1} {\vert \Gamma_{d-1}(z) \vert \choose m} { n-1- \vert N_{d-1}(z) \vert\choose t-1- m}.
\end{align*}
Note that, given $\Omega_{d-1,x,z}^*$, we have $|\Gamma_{d-1}(x)|=o(n)$ and $|N_{d-1}(x)|=o(n)$. Therefore
\begin{align*}
	\vert  H_d(x,y) \vert=|\Gamma_{d-1}(x)|\frac{(t-1)N}{n}(1+o(1))  \mbox{ and } \vert  H_d(z,w) \vert =|\Gamma_{d-1}(x)|\frac{(t-1)N}{n}(1+o(1)).
\end{align*}
Therefore, by the union bound, we get 
\begin{align}\label{eqn:unionup}
	|H_d(x,y)\cup H_d(z,w)|\le 2|\Gamma_{d-1}(x)|\frac{(t-1)N}{n}(1+o(1)), \mbox{ as $n\to \infty$}.
\end{align}
On the other hand, from the definitions,  it is clear that 
\begin{align*}
	&	H_d(x,y) \cap H_d(z,w) 
	\\=& \{ e \in H_d(x) \cap  H_d(z) : e \cap \Gamma_{d-1}(x) \backslash  \Gamma_{d-1}(z) \neq \emptyset ,e \cap \Gamma_{d-1}(z) \backslash\Gamma_{d-1}(x) \neq \emptyset \text { and }  y, w \in e \} \\
	& \cup \{ e \in H_d(x) \cap  H_d(z) : e \cap \Gamma_{d-1}(x) \cap \Gamma_{d-1}(z) \neq \emptyset \text { and }  y, w \in e \}.
\end{align*}
Which implies that, for $t\ge 3$, as $n\to \infty$,
\begin{align*}
	& \vert  H_d(x,y) \cap H_d(z,w)  \vert \\
	&= \sum\limits_{i=1}^{t-3} \sum\limits_{j=1}^{t-i-2} {\vert \Gamma_{d-1}(x) \backslash \Gamma_{d-1}(z) \vert    \choose i}
	{\vert \Gamma_{d-1}(z) \backslash \Gamma_{d-1}(x) \vert \choose j} 
	{ n- \vert N_{d-1}(x) \cup N_{d-1}(z) \vert -2 \choose t-2-i-j } \\
	&+ \sum\limits_{i=1}^{t-2} {\vert \Gamma_{d-1}(x) \cap \Gamma_{d-1}(z) \vert \choose i}
	{ n- \vert N_{d-2}(x) \cup N_{d-2}(z) \vert-2 \choose t-2-i}
	\\& \leq \l(\vert \Gamma_{d-1}(x) \backslash \Gamma_{d-1}(z) \vert \vert \Gamma_{d-1}(z) \backslash \Gamma_{d-1}(x) \vert \binom{n-1}{t-4}+\vert \Gamma_{d-1}(x) \cap \Gamma_{d-1}(z) \vert\binom{n-1}{t-3}\r)
	\\& (1+o(1)). 
\end{align*}
Since $|\Gamma_{d-1}(x)|=o(n)$ and $N=\binom{n-1}{t-1}$, we get 
\begin{align*}
	\vert  H_d(x,y) \cap H_d(z,w)  \vert\le |\Gamma_{d-1}(x)|\frac{(t-1)^2 N}{n^2}(1+o(1)), \mbox{ as $n\to \infty$.}
\end{align*}
Which implies that, given $ \Omega _{d-1,x,z}^* $, 
\begin{align}\label{eqn:uniondown}
	|H_d(x,y)\cup H_d(z,w)|&=|H_d(x,y)|+|H_d(z,w)|-|H_d(x,y)\cap H_d(z,w)|\nonumber
	\\&\ge 2|\Gamma_{d-1}(x)|\frac{(t-1)N}{n}(1-o(1)),
\end{align}
as $n\to \infty$. 
Let $a'',b''$ be as defined \eqref{definition a,b hyp}. Then from the definition of $\Omega _{d-1,x,z}^*$ in \Cref{lemma 21}, we have
\begin{align} \label{Omega subset union Hd}
	\Omega _{d-1,x,z}^* \subseteq \{ 2a'' \leq |H_d(x,y)\cup H_d(z,w)| \leq 2b''\}.
\end{align}
Therefore substituting $a=2a''$ and $b=2b''$ in  \eqref{upperbound of Xalpha Xbeta hyp} , \eqref{lowerbound of Xalpha Xbeta hyp} and by \eqref{Omega subset union Hd} we get
\begin{equation} \label{bounds for X alpha=1, X beta=1 hyp}  	
	\left(1-p\right)^{2b''} \Prob \left( \Omega_{d-1,x,z}^* \right) \leq \Prob \left( X_ \alpha =1,X_\beta = 1 \right)  \leq \left(1-p\right)^{2a''} + \Prob \left( ( \Omega_{d-1,x,z}^*)^c \right) .
	 \end{equation}
Further, the inequality $e^{-p(1-p)}\le 1-p\le e^{-p}$ gives
\begin{align*}
    \left(1-p\right)^{2b''} \geq & e^{- \frac {2(1+\eta_{d-1}) (t-1)^{d} N^d p^{d} (1+p)} {n}(1+o(1))} \\
    \left(1-p\right)^{2a''} \leq & e^{- \frac {2(1-\eta_{d-1}) (t-1)^{d} N^d p^{d}} {n}(1-o(1)) }.	
\end{align*}	
Using the last two inequalities together with \Cref{lemma 21} and Assumption~\ref{ass main} in \eqref{bounds for X alpha=1, X beta=1 hyp}, we obtain 
\begin{align*}
	\Prob \left( X_ \alpha =1, X_\beta = 1 \right) &\leq 
	\left( \frac{c^2}{n^4} \right) ^{ \left( 1 - \eta_{d-1} \right) \left( 1-o(1) \right) } 
	+ \frac{ 6(d-1) }{n^{10} }, \\
	\Prob \left( X_ \alpha =1, X_\beta = 1 \right) &\geq \left( \frac{c^2}{n^4} \right) ^{ \left( 1 + \eta_{d-1} \right) (1+p) \left( 1+o(1) \right) } \left( 1-\frac{ 6(d-1) }{n^{10}} \right).
\end{align*}
Which give the result as $p\log n \to 0$ and $\eta_{d-1}\log n\to 0$ when $n\to \infty$.

\vspace{.1cm}
\noindent \underline {\bf Case-II}:  Suppose $\{x,y\}\cap \{w,z\}\neq \emptyset$. Without loss of generality we assume that $y = w$.   
We consider $H_d(x,y)$ and $H_d(z,y)$.
In this case 
\begin{align*}
	& H_d(x,y) \cap H_d(z,y) \\
	=& \{ e \in H_d(x) \cap  H_d(z) : e \cap \Gamma_{d-1}(x)\backslash \Gamma_{d-1}(z) \neq \emptyset , e \cap \Gamma_{d-1}(z) \backslash \Gamma_{d-1}(x) \neq \emptyset \text { and }  y \in e \} \\
	& \cup \{ e \in H_d(x) \cap  H_d(z) : e \cap \Gamma_{d-1}(x) \cap \Gamma_{d-1}(z) \neq \emptyset \text { and } y \in e \}.
\end{align*}
Which implies that, as $n\to \infty$,
\begin{align*}
	& \vert  H_d(x,y) \cap H_d(z,y)  \vert \\
	&= \sum\limits_{i=1}^{t-2} \sum\limits_{j=1}^{t-i-1} {\vert \Gamma_{d-1}(x) \backslash \Gamma_{d-1}(z) \vert    \choose i}
	{\vert \Gamma_{d-1}(z) \backslash \Gamma_{d-1}(x) \vert \choose j} 
	{ n- \vert N_{d-1}(x) \cup N_{d-1}(z) \vert -1 \choose t-1-i-j } \\
	&+ \sum\limits_{i=1}^{t-1} {\vert \Gamma_{d-1}(x) \cap \Gamma_{d-1}(z) \vert \choose i}
	{ n- \vert N_{d-2}(x) \cup N_{d-2}(z) \vert -1\choose t-1-i}
	\\&=\l(\vert \Gamma_{d-1}(x) \backslash \Gamma_{d-1}(z) \vert \vert \Gamma_{d-1}(z) \backslash \Gamma_{d-1}(x) \vert \binom{n-1}{t-3}+\vert \Gamma_{d-1}(x) \cap \Gamma_{d-1}(z) \vert\binom{n-1}{t-2}\r)(1+o(1)).
\end{align*}
Using the similar arguments as in Case I, we have 
\[
\vert  H_d(x,y) \cap H_d(z,w)  \vert \le |\Gamma_{d-1}(x)|\frac{t^2N}{n^2}(1+o(1)),\;\; \mbox{ as $n\to \infty$}.
\]
After following a similar procedure as in Case-I,  given $ \Omega _{d-1,x,z}^*$, we obtain
\begin{align*}
	\vert  H_d(x,y) \cup  H_d(z,y) \vert &\geq 2|\Gamma_{d-1}(x)|\frac{(t-1)N}{n}(1-o(1)), \;\; \mbox{ as $n\to \infty$}.
\end{align*}
Consequently, by following similar steps as in \underline{Case-I}, we obtain
the result.
		 
\end{proof}

	\section*{Acknowledgement} The research of KA was partially supported by the Inspire Faculty Fellowship: DST/INSPIRE/04/2020/000579. The research of AK was supported by a research fellowship from the University Grants Commission (UGC), Government of India.


\end{document}